\documentclass[reqno,11pt]{amsart}

\usepackage[all]{xy}
\usepackage{amsthm, amssymb, amsmath, amscd}
\usepackage{braket}
\usepackage{geometry}
\geometry{top=2.6truecm,bottom=2.5truecm,left=2.5truecm,right=2.5truecm}

\usepackage[dvips]{graphicx, color}


\def\a{\alpha}

\def\e{\epsilon}

\def\l{\lambda}
\def\s{\sigma}
\def\t{\tau}


\def\NN{{\mathbb N}}
\def\PP{{\mathbb P}}
\def\QQ{{\mathbb Q}}

\def\ZZ{{\mathbb Z}}


\def\cal{\mathcal}

\def\cC{{\cal C}}

\def\cF{{\cal F}}
\def\cG{{\cal G}}

\def\cP{{\cal P}}

\def\cV{{\cal V}}


\def\fm{{\mathfrak m}}




\def\Aut{\operatorname{Aut}}

\def\add{\operatorname{add}}



\def\Coker{\operatorname{Coker}}

\def\CM{\operatorname {CM}}

\def\dim{\operatorname{dim}}


\def\Ext{\operatorname {Ext}}



\def\GKdim{\operatorname{GKdim}}

\def\gldim{\operatorname{gldim}}

\def\grmod{\operatorname{grmod}}
\def\GrMod{\operatorname{GrMod}}

\def\GrAut{\operatorname{GrAut}}

\def\H{\operatorname{H}}
\def\Hom{\operatorname {Hom}}

\def\id{\operatorname {id}}

\def\Im{\operatorname{Im}}

\def\injdim{\operatorname{injdim}}


\def\Ker{\operatorname {ker}}


\def\lin{\operatorname{lin}}


\def\mod{\operatorname{mod}}
\def\Mod{\operatorname{Mod}}



\def\Obj{\operatorname{Obj}}

\def\pdim{\operatorname{projdim}}

\def\Proj{\operatorname{Proj}}



\def\rank{\operatorname{rank}}


\def\sup{\operatorname{sup}}


\def\tors{\operatorname{tors}}
\def\Tors{\operatorname{Tors}}
\def\tails{\operatorname{tails}}
\def\Tails{\operatorname{Tails}}


\def\uCM{\underline {\operatorname{CM}}}


\def\R{\operatorname{\bf R}}
\def\RHom{\operatorname{{\bf R}Hom}}

\newcommand{\lotimes}{\otimes^{\bf{L}}}
\def\<{\langle}
\def\>{\rangle}

\def\NMF{\operatorname{NMF}}
\def\uNMF{\underline{\operatorname{NMF}}}
\def\TMF{\operatorname{TMF}}
\def\MF{\operatorname{MF}}

\def\TR{\operatorname{TR}}

\def\uTR{\underline{\operatorname{TR}}}

\def\sf{\nu}
\def\tw{\operatorname{tw}}
\def\hrank{\operatorname{hrank}}

\def\rnum#1{\expandafter{\romannumeral #1}}
\def\Rnum#1{\uppercase\expandafter{\romannumeral #1}}

\theoremstyle{plain} 
\newtheorem{theorem}{Theorem}[section]

\newtheorem{lemma}[theorem]{Lemma}
\newtheorem{proposition}[theorem]{Proposition}

\theoremstyle{definition}
\newtheorem{definition}[theorem]{Definition}
\newtheorem{example}[theorem]{Example}

\theoremstyle{remark}

\newtheorem{remark}[theorem]{Remark}

\numberwithin{equation}{section}


\begin{document}

\pagenumbering{arabic}

\title[Noncommutative Matrix Factorizations]
{Noncommutative Matrix Factorizations with an Application to Skew Exterior Algebras} 

\author{Izuru Mori}

\address{Department of Mathematics,
Faculty of Science,
Shizuoka University,
836 Ohya, Suruga-ku, Shizuoka 422-8529, Japan}
\email{mori.izuru@shizuoka.ac.jp}

\author{Kenta Ueyama}

\address{Department of Mathematics,
Faculty of Education,
Hirosaki University,
1 Bunkyocho, Hirosaki, Aomori 036-8560, Japan}
\email{k-ueyama@hirosaki-u.ac.jp}

\keywords {noncommutative matrix factorizations, totally reflexive modules, skew exterior algebras}

\subjclass[2010]{Primary: 16E05 \; Secondary: 16G50, 16S37, 16S38}

\thanks{The first author was supported by JSPS Grant-in-Aid for Scientific Research (C) 16K05097,
(C) 20K03510, and (B) 16H03923.
The second author was supported by JSPS Grant-in-Aid for Young Scientists (B) 15K17503 and JSPS Grant-in-Aid for Early-Career Scientists 18K13381.}

\begin{abstract}
Theory of matrix factorizations is useful to study hypersurfaces in commutative algebra.
To study noncommutative hypersurfaces, which are important objects of study in noncommutative algebraic geometry,
we introduce a notion of noncommutative matrix factorization for an arbitrary nonzero non-unit element of a ring. 
First we show that the category of noncommutative graded matrix factorizations is invariant under the operation called twist
(this result is a generalization of the result by Cassidy-Conner-Kirkman-Moore).
Then we give two category equivalences involving noncommutative matrix factorizations and totally reflexive modules
(this result is analogous to the famous result by Eisenbud for commutative hypersurfaces).
As an application,
we describe indecomposable noncommutative graded matrix factorizations over skew exterior algebras.
\end{abstract} 

\maketitle

\section{Introduction}
The theory of matrix factorizations was introduced by Eisenbud \cite{Ei} in 1980.
It is useful to study hypersurfaces in commutative algebra.
In fact, Eisenbud proved the following famous theorem.

\begin{theorem}[{\cite[Section 6]{Ei} (cf. \cite[Theorem 7.4]{Y})}] \label{thm.mot}
Let $S$ be a regular local ring, $f \in S$ a nonzero non-unit element, and $A=S/(f)$.
Let $\MF_S(f)$ denote the category of matrix factorizations of $f$ over $S$.
\begin{enumerate}
\item If $M$ is a maximal Cohen-Macaulay module over $A$ with no free summand, then the minimal free resolution of $M$ is obtained from a matrix factorization of $f$ over $S$, and hence it is periodic of period 1 or 2.
\item The factor category $\MF_S(f)/\add\{(1,f)\}$ is equivalent to the category $\CM(A)$ of maximal Cohen-Macaulay $A$-modules.
\item The factor category $\MF_S(f)/\add\{(1,f), (f,1)\}$ is equivalent to the stable category $\uCM(A)$ of maximal Cohen-Macaulay $A$-modules.
\end{enumerate}
\end{theorem}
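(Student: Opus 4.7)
The plan is to reduce all three parts to the single assertion that the cokernel assignment $(\varphi, \psi) \mapsto \coker \varphi$ defines a functor $\MF_S(f) \to \CM(A)$ that becomes an equivalence after quotienting by the trivial factorization $(1, f)$ (and further by $(f, 1)$ for the stable version).

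For part (1), I would start with $M$ MCM over $A$ with no free summand. Since $S$ is regular, $\pd_S M < \infty$, and the Auslander--Buchsbaum formula combined with $\depth_S M = \depth_A M = \dim A = \dim S - 1$ forces $\pd_S M = 1$. Hence $M$ admits a minimal $S$-free resolution $0 \to F_1 \xrightarrow{\varphi} F_0 \to M \to 0$; a rank count after base change to $\Fract(S)$ (using that $M$ is $f$-torsion) shows $\rank F_0 = \rank F_1$. Multiplication by $f$ on $M$ is zero, so by projectivity this lifts to $\psi \colon F_0 \to F_1$ with $\varphi \psi = f \cdot \id_{F_0}$; injectivity of $\varphi$ then yields $\psi \varphi = f \cdot \id_{F_1}$ as well, producing the matrix factorization $(\varphi, \psi)$. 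Splicing the two maps into a two-periodic complex and reducing modulo $f$ gives the desired periodic minimal $A$-free resolution: exactness after reduction follows from the identification $\Tor_1^S(A, M) \cong M$, while minimality is inherited from the fact that the entries of $\varphi$ and $\psi$ lie in the maximal ideal of $S$.

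For part (2), the cokernel functor is essentially surjective by (1) applied to arbitrary $M \in \CM(A)$, it kills $(1, f)$ (whose cokernel is zero), and it is full because any $A$-linear map between cokernels lifts to an $S$-linear map of the two-term resolutions. Faithfulness reduces to the following assertion: if $(\alpha, \beta)\colon (\varphi, \psi) \to (\varphi', \psi')$ induces the zero map on cokernels, then $\alpha$ factors as $\varphi' h$ for some $h \colon F_0 \to F_1'$, and from such an $h$ one explicitly builds a factorization of the pair $(\alpha, \beta)$ through a direct sum of copies of the trivial factorization $(1, f)$. Part (3) then follows formally because $(f, 1)$ has cokernel the free $A$-module $F/fF$, so killing it in the domain corresponds precisely to quotienting $\CM(A)$ by morphisms that factor through a free module, yielding $\uCM(A)$.

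The principal obstacle is the faithfulness step in (2): the passage from a null-homotopy on the cokernel level to an explicit factorization of the whole matrix-factorization morphism through trivial pieces. This requires careful bookkeeping of the homotopies and makes essential use of the specific two-periodic structure of matrix factorizations; once handled, (1) falls out of the lifting argument and (3) is a categorical formality.
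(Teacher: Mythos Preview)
The paper does not prove Theorem~\ref{thm.mot}: it is quoted in the introduction as a classical result of Eisenbud, with a reference to \cite{Ei} and \cite{Y}, and serves only as motivation. There is therefore no ``paper's own proof'' of this statement to compare against.

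That said, the paper does prove noncommutative graded analogues (Theorem~\ref{thm.m1}, Proposition~\ref{prop.mfr}, Proposition~\ref{prop.m2}, Theorems~\ref{thm.m3} and~\ref{thm.m4}), and the architecture of those proofs matches your plan almost exactly: one shows $\pd_S M=1$, lifts multiplication by $f$ through the injective map $\phi$ to obtain the second factor $\psi$, checks rank equality, and then proves that $\Coker$ is full and faithful modulo the trivial factorizations, with part~(3) following formally. The only substantive differences are in the supporting lemmas: where you invoke Auslander--Buchsbaum and depth to get $\pd_S M=1$, the paper (working outside the commutative regular local setting) uses an $\RHom$ computation (Lemma~\ref{lem.pd}); and where you compare ranks via the fraction field, the paper uses Hilbert series and the quantity $\hrank$. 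Your faithfulness argument --- factoring a morphism with zero cokernel through $(1,f)$ via a homotopy $h$ --- is exactly the content of Proposition~\ref{prop.m2}. So your sketch is correct and is, up to these setting-specific substitutions, the same proof the paper gives for its generalization.
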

Nowadays, matrix factorizations are related to several areas of mathematics, including representation theory of Cohen-Macaulay modules, singularity category, Calabi-Yau category, Khovanov-Rozansky homology, and homological mirror symmetry. 
In this paper, to investigate noncommutative hypersurfaces, which are important objects of study in noncommutative algebraic geometry (see \cite{SV}, \cite{KKZ}, \cite{MU}), we introduce a notion of noncommutative matrix factorization for an arbitrary nonzero non-unit element $f$ of a ring.

First, we will show that the category of noncommutative graded matrix factorizations $\NMF^{\ZZ}_S(f)$ is invariant under the operation called twist, which explains why the period of a noncommutative matrix factorization can be arbitrary (compare with Theorem \ref{thm.mot} (1)).

\begin{theorem}[{Theorem \ref{thm.nmft}}]
Let $S$ be a graded algebra, and $f\in S_d$ an element.  
If $\theta = \{\theta _i\}_{i \in \ZZ}$ is a twisting system on $S$ such that $\theta_i(f)=\l^if$ for some $0\neq \l\in k$ and for every $i\in \ZZ$, 
then $\operatorname {NMF}^{\ZZ}_S(f) \cong \operatorname {NMF}^{\ZZ}_{S^{\theta}}(f^{\theta})$. 
\end{theorem}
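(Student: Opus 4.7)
The plan is to establish the equivalence via the Zhang twist functor on graded modules. First, I would recall that the Zhang twist $(-)^{\theta}\colon \GrMod S \to \GrMod S^{\theta}$, sending a graded right $S$-module $M$ to $M^{\theta}$ (same underlying graded vector space) equipped with the twisted action $m *_{\theta} s := m \cdot \theta_{|m|}(s)$, is an equivalence of categories; it preserves graded free modules (sending $S(n)$ to $S^{\theta}(n)$) and identifies $S$-linear maps with $S^{\theta}$-linear maps on the same underlying $k$-vector spaces.

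I would then define a functor $T\colon \NMF^{\ZZ}_S(f) \to \NMF^{\ZZ}_{S^{\theta}}(f^{\theta})$ by applying the Zhang twist component-wise. Given a noncommutative matrix factorization consisting of homomorphisms $\varphi\colon F \to G$ and $\psi\colon G \to F$ of graded free $S$-modules satisfying $\psi\varphi = \mu_f$ and $\varphi\psi = \mu_f$ (right multiplication by $f$), I set $T(\varphi,\psi) = (\varphi^{\theta}, \psi^{\theta})$, viewing the underlying $k$-linear maps as maps between the twisted modules, together with degree-dependent scalar corrections where needed.

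The crucial computation verifies that $T(\varphi,\psi)$ is a matrix factorization of $f^{\theta}$ over $S^{\theta}$. Here the hypothesis $\theta_i(f) = \lambda^i f$ enters decisively: for homogeneous $m$ of degree $i$ in $M^{\theta}$, right multiplication by $f^{\theta}$ in $S^{\theta}$ satisfies $m *_{\theta} f^{\theta} = m \cdot \theta_i(f) = \lambda^i mf$. Therefore the twist $(\mu_f)^{\theta}$ of the $S$-multiplication map agrees with $\mu_{f^{\theta}}$ on $M^{\theta}$ up to the uniform scalar $\lambda^i$ on the degree-$i$ component, and this discrepancy must be absorbed into the construction of $T$ — for example by rescaling $\varphi^{\theta}$ or $\psi^{\theta}$ via a diagonal operator chosen so that the result remains $S^{\theta}$-linear.

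Finally, I would construct the inverse functor analogously from the twisting system $\theta^{-1}$, which satisfies $\theta_i^{-1}(f^{\theta}) = \lambda^{-i} f^{\theta}$ (by applying $\theta_i^{-1}$ to the defining identity $\theta_i(f) = \lambda^i f$); functoriality and naturality in both directions follow from the corresponding properties of the Zhang twist on graded modules. The main obstacle will be organizing the degree-dependent scalar corrections: since $\lambda^i$ depends on $i$, no uniform rescaling of $\varphi$ or $\psi$ can compensate for the discrepancy between $(\mu_f)^{\theta}$ and $\mu_{f^{\theta}}$, so one must select degree-dependent multipliers on the underlying graded free modules and verify that, after these corrections, the resulting maps remain honest homomorphisms of graded $S^{\theta}$-modules and are compatible with morphisms in $\NMF^{\ZZ}$. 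Handling this scalar bookkeeping cleanly, while keeping the construction natural, is the delicate heart of the argument.
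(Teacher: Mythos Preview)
Your overall strategy---transport the factorization through Zhang's equivalence $(-)^\theta\colon \GrMod S \to \GrMod S^\theta$ and build the inverse from $\theta^{-1}$---is exactly the paper's approach. The paper, however, dispatches the scalar issue differently and more cleanly than you propose: rather than attempting degree-dependent rescalings after twisting, it first replaces $\theta$ by $\theta' = \epsilon_{\lambda^{-1/d}}\theta$ (where $\epsilon_\mu$ scales degree $i$ by $\mu^i$), observes that $S^\theta \cong S^{\theta'}$ as graded algebras with $f^\theta$ sent to a nonzero scalar multiple of $f^{\theta'}$, and notes that $\theta'_i(f)=f$ for all $i$. This reduces everything to the case $\lambda=1$, after which $(f\cdot)^\theta$ becomes $f^\theta\cdot$ on the nose (once one identifies $S(-m)^\theta \cong S^\theta(-m)$ via $\theta_m$), and no further bookkeeping is needed.

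Your specific fix---degree-dependent scalar multipliers on the free modules---will not work as written: the map $m \mapsto \lambda^{|m|} m$ is \emph{not} an $S^\theta$-module homomorphism unless $\lambda=1$, so such rescalings cannot be realized as isomorphisms of the $F^i$. A viable alternative to the paper's reduction is to observe that after twisting one obtains $\phi^i\phi^{i+1}=\lambda_i f^\theta\cdot$ for certain units $\lambda_i\in k$, and then invoke the paper's Remark~2.2(4), which shows such a sequence already lies in $\NMF_{S^\theta}(f^\theta)$; but this is really the same reduction in disguise.

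Two smaller corrections. In this paper a noncommutative matrix factorization is an infinite sequence $\{\phi^i\colon F^{i+1}\to F^i\}_{i\in\ZZ}$ with $\phi^i\phi^{i+1}$ equal to \emph{left} multiplication by $f$ (a right $S$-module map), not a pair $(\varphi,\psi)$ with right multiplication; the pair description is equivalent only when $f$ is regular normal, which is not assumed here. Also, the Zhang twist does not send $S(n)$ to $S^\theta(n)$ on the nose but to $S(n)^\theta$, which is isomorphic to $S^\theta(n)$ via $\theta_{-n}$; tracking this identification is part of why the $\lambda=1$ reduction is convenient.
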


Suppose that $f$ is a regular normal element.
In this case, Cassidy-Conner-Kirkman-Moore \cite {CCKM} defined the notion of twisted matrix factorization of $f$,
and we will show that the category of noncommutative graded matrix factorizations of $f$ is equivalent to the category of twisted graded matrix factorizations of $f$ (Proposition \ref{prop.ctmf}),
so the above result is a generalization of \cite[Theorem 3.6]{CCKM}.

Next, we will show noncommutative graded and non-hypersurface analogues of Theorem \ref{thm.mot} (2) and (3).
Let $S$ be a graded algebra, $f\in S_d$ a regular normal element and $A=S/(f)$.
There are two types of trivial noncommutative matrix factorizations, $\phi_F$ and $_F\phi$, as defined in Definition \ref{def.trivnmf}.
We define $\cF =\{\phi_F \in \operatorname {NMF}^{\ZZ}_S(f) \mid F\; \textnormal{is free} \}$ and $\cG =\{\phi_F\oplus {_G\phi} \in \operatorname {NMF}^{\ZZ}_S(f) \mid F, G \; \textnormal{are free} \}$.
Let $\TR_S^\ZZ(A)$ denote the category of finitely generated graded totally reflexive $A$-modules which have finite projective dimension over $S$.
We define $\cP =\{P \in \grmod A \mid P\; \textnormal{is free}\}$.
With these notations, our result is stated as follows.

\begin{theorem}[{Theorem \ref{thm.m3}, Theorem \ref{thm.m4}}]
Let $S$ be a graded quotient algebra of a right noetherian connected graded regular algebra, $f\in S_d$ a regular normal element, and $A=S/(f)$.
Then the factor category $\NMF_S^{\ZZ}(f)/\cF$ is equivalent to $\TR^{\ZZ}_S(A)$.
Moreover, the factor category $\NMF_S^{\ZZ}(f)/\cG$ is equivalent to the factor category $\TR^{\ZZ}_S(A)/\cP$. 
\end{theorem}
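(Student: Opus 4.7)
The plan is to adapt Eisenbud's classical argument to the noncommutative graded setting by constructing a functor $\Phi\colon \NMF_S^{\ZZ}(f) \to \TR_S^{\ZZ}(A)$ given by taking cokernels, showing it kills $\cF$, and producing an inverse up to the quotient. Specifically, to a noncommutative matrix factorization $\phi = (d_0, d_1)$ with $d_1\colon F_1 \to F_0$ and $d_0\colon F_0 \to F_{-1}$ satisfying the defining relations (compositions equal to multiplication by $f$, suitably interpreted using normality of $f$), I would assign the graded $A$-module $\Phi(\phi) := \coker(d_1 \otimes_S A)$. The factorization identities mean that after reducing modulo $f$, the maps $(d_i \otimes_S A)$ splice into a two-sided acyclic complex of finitely generated graded free $A$-modules whose truncation is a complete resolution of $\Phi(\phi)$; hence $\Phi(\phi)$ is totally reflexive. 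The factorization itself, as a finite complex of graded free $S$-modules, yields a finite $S$-free resolution of $\Phi(\phi)$, so $\pd_S\Phi(\phi) < \infty$. For a trivial factorization $\phi_F \in \cF$ the map $d_1 \otimes_S A$ is the identity of $F \otimes_S A$, so the cokernel vanishes; this shows $\Phi$ factors through the quotient $\NMF_S^{\ZZ}(f)/\cF$.

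For essential surjectivity, given $M \in \TR_S^{\ZZ}(A)$ I would take a minimal graded $S$-free resolution $F_\bullet \twoheadrightarrow M$, which by hypothesis has finite length, and use the fact that $fM = 0$ to produce a graded homotopy $\sigma\colon F_\bullet \to F_{\bullet + 1}$ with $d\sigma + \sigma d = f\cdot \id$. Splicing $d$ and $\sigma$ at appropriate degrees assembles a noncommutative matrix factorization $\phi_M$ with $\Phi(\phi_M) \cong M$. Fullness and faithfulness are proved together: any morphism $M \to M'$ between cokernels lifts, step by step using the projectivity of the $F_i$ and the normality of $f$, to a chain map of the $S$-free resolutions and hence to a morphism of the associated matrix factorizations; conversely, two lifts differ by a homotopy that is readily shown to factor through an object of $\cF$ (this corresponds precisely to the usual ambiguity of lifts modulo multiples of $f$).

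For the second equivalence, observe that for $_F\phi \in \cG$ the map $d_1 \otimes_S A$ becomes multiplication by $f$ modulo $f$, namely zero, so $\Phi(_F\phi) = F \otimes_S A$ is a free graded $A$-module; conversely every object of $\cP$ arises this way up to isomorphism. Consequently the equivalence $\bar\Phi\colon \NMF_S^{\ZZ}(f)/\cF \xrightarrow{\sim} \TR_S^{\ZZ}(A)$ sends the image of $\cG$ precisely to $\cP$, and passing to the further quotients yields the second equivalence $\NMF_S^{\ZZ}(f)/\cG \cong \TR_S^{\ZZ}(A)/\cP$.

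The main obstacle I expect is the construction of the homotopy $\sigma$ in the noncommutative graded setting together with the verification that the resulting data truly define a noncommutative matrix factorization as in the paper's definition; in particular, one must track how the twist induced by the normal element $f$ interacts with the grading shifts and the free modules. A secondary but nontrivial point is a noncommutative Auslander--Buchsbaum-type statement guaranteeing that a totally reflexive $A$-module with finite $\pd_S$ admits a finite minimal $S$-free resolution in the generality of $S$ being a graded quotient of a right noetherian connected graded regular algebra; this is where the regularity hypothesis on the ambient algebra is used essentially.
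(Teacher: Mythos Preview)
Your overall architecture matches the paper's: define $\Coker$ on $\NMF_S^{\ZZ}(f)$, check it lands in $\TR_S^{\ZZ}(A)$, kill $\cF$, and verify fully faithfulness and essential surjectivity; then identify $\cG/\cF$ with $\cP$ to descend to the second equivalence. The fully-faithfulness sketch and the second-equivalence argument are essentially the same as the paper's (Proposition~\ref{prop.m2} and Theorem~\ref{thm.m4}).

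Where your proposal has a real gap is essential surjectivity. You take a minimal graded $S$-free resolution of $M$ of \emph{finite} length and propose to build a homotopy $\sigma$ with $d\sigma+\sigma d=f$. But a noncommutative matrix factorization in this paper is a $\ZZ$-indexed sequence of maps between free modules of a \emph{fixed} rank $r$, and a homotopy on a resolution of length $>1$ does not produce such an object. The paper does not proceed this way: it first proves that for $0\neq M\in\TR_S^{\ZZ}(A)$ one has $\pd_S(M)=1$ exactly, via the computation $\RHom_S(M,S)\cong\RHom_A(M,A)[-1]$ using $\Ext^1_S(A,S)\cong A$ (Lemma~\ref{lem.pd}). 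Only then does one have a short resolution $0\to G\xrightarrow{\phi}F\to M\to 0$, and the single lift $\psi$ with $\phi\psi=f\cdot$ (your ``homotopy'') together with Theorem~\ref{thm.nmf} yields the full matrix factorization.

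Second, you misplace the role of the hypothesis that $S$ is a graded quotient of a right noetherian connected graded regular algebra. It is \emph{not} used for an Auslander--Buchsbaum-type bound on $\pd_S(M)$; that comes purely from the $\RHom$ computation above, which needs only that $S$ is right noetherian and $f$ is regular normal. The hypothesis is used to show $\rank G=\rank F$: it guarantees that Hilbert series are rational, so the quantity $\hrank M=\lim_{t\to 1}H_M(t)/H_S(t)$ is defined and additive on short exact sequences; since $\GKdim M\le\GKdim A<\GKdim S$ one gets $\hrank M=0$, hence $\rank F=\rank G$. Without this equal-rank step the pair $(\phi,\psi)$ does not satisfy the paper's definition of an object of $\NMF_S^{\ZZ}(f)$. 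Your proposal never addresses the rank issue, and that is precisely the point in the argument where the ambient regularity enters.
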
   
As an application of our theory of noncommutative matrix factorizations, we will describe indecomposable noncommutative matrix factorizations over skew exterior algebras (which are hardly noncommutative hypersurfaces). 
In particular, using techniques of noncommutative algebraic geometry,
we will show that, over a certain class of skew exterior algebras, there is a close relationship between noncommutative graded matrix factorizations and extensions of co-point modules (Theorems \ref{thm.copo} and \ref{thm.last}).  
An application to noncommutative hypersurfaces will be discussed in our subsequent paper \cite{MU2}.

\subsection{Basic Terminologies and Notations}

Throughout this paper, we fix a field $k$.
Unless otherwise stated, an \emph{algebra} means an algebra over $k$, and a \emph{graded ring} means an $\NN$-graded ring.

For a ring $S$, we denote by $\Mod S$ the category of right $S$-modules, and by $\mod S$ the full subcategory consisting of finitely generated modules.
We denote by $S^o$ the opposite ring of $S$.
We say that $S$ is \emph{regular} of dimension $n$ if $\gldim S =n < \infty$.

For a graded ring $S =\bigoplus_{i\in\NN} S_i$,
we denote by $\GrMod S$ the category of graded right $S$-modules, and by $\grmod S$ the full subcategory consisting of finitely generated modules.
Morphisms in $\GrMod S$ are right $S$-module homomorphisms preserving degrees.
For $M \in \GrMod S$ and $n \in \ZZ$, we define $M_{\geq n} := \bigoplus_{i\geq n} M_i \in \GrMod S$, and the \emph{shift} $M(n) \in \GrMod S$ by $M(n)_i = M_{n+i}$. For $M, N \in \GrMod S$, we write
$\Ext^i_S(M,N) :=\bigoplus_{n\in\ZZ}\Ext^i_{\GrMod S}(M,N(n))$ (by abuse of notation, we use the same symbol as in the ungraded case).

For a graded algebra $S =\bigoplus_{i\in\NN} S_i$, we say that $S$ is \emph{connected graded} if $S_0 = k$,
and we say that $S$ is \emph{locally finite} if $\dim_k S_i <\infty$ for all $i \in \NN$.
We denote by $\GrAut S$ the group of graded $k$-algebra automorphisms of $S$. 
We write $S^e$ for the enveloping algebra $S^o\otimes_k S$ of $S$.
If $S$ is a locally finite graded algebra and $M \in \grmod S$, then we define the \emph{Hilbert series} of $M$ by
$H_M (t) := \sum_{i \in \ZZ} (\dim_k M_i)t^i \in \ZZ[[t, t^{-1}]].$

A connected graded algebra $S$ is called an \emph{AS-regular} (resp. \emph{AS-Gorenstein}) algebra of dimension $n$ if
\begin{enumerate}
\item{} $\gldim S =n <\infty$ (resp. $\injdim_S (S) = \injdim_{S^o} (S)= n <\infty$), and
\item{} $\Ext^i_S(k ,S) \cong \Ext^i_{S^o}(k ,S) \cong
\begin{cases}
0 & \textnormal { if }\; i\neq n,\\
k(\ell) \; \textrm{for some}\; \ell \in \ZZ & \textnormal { if }\; i=n.
\end{cases}$
\end{enumerate}
Let $S$ be a noetherian AS-Gorenstein algebra of dimension $n$.
We define the \emph{local cohomology modules} of $M \in \grmod S$ by
$\H^i_\fm(M):= \lim _{n \to \infty} \Ext^i_S(S/S_{\geq n}, M)$.
It is well-known that $\H_{\fm}^i(S)=0$ for all $i\neq n$.
We say that $M \in \grmod S$ is \emph{graded maximal Cohen-Macaulay} if $\H_{\fm}^i(M)=0$ for all $i\neq n$.
We denote by $\CM^{\ZZ} (S)$ the full subcategory of $\grmod S$ consisting of graded maximal Cohen-Macaulay modules.

\section{Noncommutative Matrix Factorizations} 

\begin{definition}
Let $S$ be a ring and $f\in S$ an element.
A \emph{noncommutative right matrix factorization} of $f$ over $S$ is a sequence of right $S$-module homomorphisms $\{\phi^i:F^{i+1}\to F^i\}_{i\in \ZZ}$
where $F^i$ are free right $S$-modules of rank $r$ for some $r\in \NN$ such that there is a commutative diagram
\[\xymatrix@R=2pc@C=3pc{
F^{i+2} \ar[d]_{\cong} \ar[r]^{\phi^i\phi^{i+1}} &F^i \ar[d]^{\cong} \\
S^r \ar[r]^{f\cdot} &S^r 
}\]
for every $i\in \ZZ$.
A \emph{morphism }
$$\mu :\{\phi^i:F^{i+1}\to F^i\}_{i\in \ZZ}\to \{\psi^i:G^{i+1}\to G^i\}_{i\in \ZZ}$$
of noncommutative right matrix factorizations is a sequence of right $S$-module homomorphisms $\{\mu ^i:F^i\to G^i\}_{i\in \ZZ}$ such that the diagram 
\[\xymatrix@R=2pc@C=3pc{
F^{i+1} \ar[d]_{\mu ^{i+1}} \ar[r]^{\phi^i} &F^i \ar[d]^{\mu ^{i}} \\
G^{i+1} \ar[r]^{\psi^i} &G^{i}
}\]
commutes for every $i\in \ZZ$.
We denote by $\NMF_S(f)$ the category of noncommutative right matrix factorizations.  

Let $S$ be a graded ring and $f\in S_d$ a homogeneous element.
A \emph{noncommutative graded right matrix factorization} of $f$ over $S$ is a sequence of graded right $S$-module homomorphisms $\{\phi^i:F^{i+1}\to F^i\}_{i\in \ZZ}$
where $F^i$ are graded free right $S$-modules of rank $r$ for some $r\in \NN$ such that 
there is a commutative diagram 
\[\xymatrix@R=2pc@C=0.75pc{
F^{i+2} \ar[d]_{\cong} \ar[rrr]^{\phi^i\phi^{i+1}} &&&F^i \ar[d]^{\cong} \\
\bigoplus _{s=1}^rS(-m_{i+2,s})\ar@{=}[r] &\bigoplus _{s=1}^rS(-m_{is}-d) \ar[rr]^(0.54){f\cdot} &&\bigoplus _{s=1}^rS(-m_{is})  
}\]
for every $i\in \ZZ$. 
We can similarly define the category of noncommutative graded right matrix factorizations $\NMF^{\ZZ}_S(f)$.

We can also similarly define a \emph{noncommutative (graded) left matrix factorization} of $f$ over $S$.  
\end{definition} 

\begin{remark} \label{rem.la}
Let $S$ be a (graded) ring and $f\in S$ a (homogeneous) element. 
\begin{enumerate} 
\item{} The full subcategory $\NMF'_S(f)$ of $\NMF_S(f)$ consisting of $\{\phi^i:F^{i+1}\to F^i\}_{i\in \ZZ} \in \NMF_S(f)$ where $F^i=S^r$ for some $r\in \NN$ and $\phi^i\phi^{i+1}=f\cdot$ for every $i\in \ZZ$
may be a more straight noncommutative generalization of the category of commutative matrix factorizations (see Remark \ref{rem.cmf}).  By the definition of $\NMF_S(f)$, $\NMF'_S(f)$ is dense in $\NMF_S(f)$ so that they are equivalent categories, so, in this paper, we can and will use either category, which is more suitable depending on the context.  
\item{}  Let $\{\phi^i:F^{i+1}\to F^i\}_{i\in \ZZ}$ be a noncommutative right matrix factorization of $f$ over $S$ of rank $r$.
By (1), we often assume without loss of generality that $F^i=S^r$ and $\phi^i\phi^{i+1}=f\cdot$.
In this case, every $\phi^i$ is the left multiplication of a matrix $\Phi^i$ whose entries are elements in $S$, so that $\Phi^i\Phi^{i+1}=fE_r$ where $E_r$ is the identity matrix of size $r$.  
\item{}  Let $\{\phi^i:F^{i+1}\to F^i\}_{i\in \ZZ}$ be a noncommutative graded right matrix factorization of $f$ over $S$ of rank $r$ such that $F^i=\bigoplus _{s=1}^rS(-m_{is})$.
In this case, we may write $\phi^i=(\phi^i_{st})$ where $\phi^i_{st}:S(-m_{i+1, t})\to S(-m_{is})$ is the left multiplication of an element in $S_{m_{i+1, t}-m_{is}}$, so $\phi^i$ is the left multiplication of a matrix $\Phi^i$ whose entries are homogeneous elements in $S$, so that $\Phi^i\Phi^{i+1}=fE_r$ where $E_r$ is the identity matrix of size $r$.  
\item{} Two noncommutative (graded) right matrix factorizations $\phi$ and $\psi$ are isomorphic if and only if there are invertible matrices $P^i$ whose entries are in $S$ such that $\Psi^{i}=P^i\Phi^{i}(P^{i+1})^{-1}$ for every $i\in \ZZ$.
$$\begin{CD}
\cdots & @>\Phi ^{i+2}\cdot >> F^{i+2} @>\Phi ^{i+1}\cdot >>
F^{i+1} @>\Phi ^i\cdot >> F^i @>\Phi ^{i-1}\cdot >>& \cdots \\
& & & @VP^{i+2}\cdot VV @VP^{i+1}\cdot VV @VP^i\cdot VV \\
\cdots & @>\Psi^{i+2}\cdot>>G^{i+2} @>\Psi^{i+1}\cdot>> G^{i+1} @>\Psi^i\cdot >> G^i @>\Psi^{i-1}\cdot>>& \cdots
\end{CD}$$
\item{} Let $\{\phi^i:F^{i+1}\to F^i\}_{i\in \ZZ}$ be a sequence of right $S$-module homomorphisms between free right $S$-modules $F^i$ such that $\phi^i\phi^{i+1}=\l_if\cdot :F^{i+2}\to F^i$ for some unit element $\l_i\in S$ for every $i\in \ZZ$.  If $\{\psi^i:F^{i+1}\to F^i\}_{i\in \ZZ}$ is a sequence of right $S$-module homomorphisms inductively defined by 
\begin{align*}
\dots, &\psi^{-3}=\frac{\l_{-2}}{\l_{-1}\l_{-3}}\phi^{-3}, \psi^{-2}=\frac{\l_{-1}}{\l_{-2}}\phi^{-2}, \psi^{-1}=\frac{1}{\l_{-1}}\phi^{-1},  \\
&\psi^0=\phi ^0, \psi^1=\frac{1}{\l _0}\phi^1, \psi^2=\frac{\l_0}{\l_1}\phi^2, \psi ^3=\frac{\l_1}{\l_0\l_2}\phi^3, \psi ^4=\frac{\l_0\l_2}{\l_1\l_3}\phi^4, \dots, 
\end{align*}
then $\psi^i\psi^{i+1}=f\cdot :F^{i+2}\to F^i$.   
Since we have a commutative diagram
\[\xymatrix@R=2pc@C=6pc{
F^{i+2} \ar[d]_{\cong} \ar[r]^{\phi^i\phi^{i+1}=\l_if\cdot} &F^{i} \ar[d]^{\cong} \\
F^{i+2} \ar[r]^{\psi^i\psi^{i+1}=f\cdot} &F^{i},
}\]
it follows that $\phi=\{\phi^i\}_{i\in \ZZ} \in \NMF_S(f)$.
In particular, $\NMF_S(f)=\NMF_S(\l f)$ for every unit element $\l\in S$. 
\end{enumerate}
\end{remark}

\begin{example} \label{ex.2.3}
In practice, it is often easier to find $\phi\in \NMF_S(f)$ such that $\phi^i\phi^{i+1}=\l_if\cdot $ for some unit element $\l_i\in S$ than the one such that $\phi^i\phi^{i+1}=f\cdot $ for every $i\in \ZZ$.  For example, let $S=k\<x, y\>/(x^2, y^2)$, 
 $f=\a xy+yx\in S$ where $0\neq \a\in k$, $A=S/(f)$, and   
$M=A/(ax+by)A\in \mod A$ where $0\neq a, b\in k$.    The ``natural" choice of the differentials in the free resolution of $M$ is induced by $\phi^i=(ax+\a^iby)\cdot :S\to S$, but
$$\Phi^i\Phi^{i+1}=(ax+\a^iby)(ax+\a^{i+1}by)=a^2x^2+\a^iab(\a xy+yx)+\a^{2i+1}b^2y^2=\a^iabf.$$
If we define
\begin{align*}
\Psi^{2i} := \a^{-i}\Phi^{2i}=\a^{-i}ax+\a ^iby \quad \textrm{and}\quad
\Psi^{2i+1} := \a^{-i}a^{-1}b^{-1}\Phi^{2i+1}=\a^{-i}b^{-1}x+\a ^{i+1}a^{-1}y,
\end{align*} 
then 
\begin{align*}
\Psi ^{2i}\Psi ^{2i+1} & =(\a ^{-i}ax+\a^iby)(\a^{-i}b^{-1}x+\a^{i+1}a^{-1}y)\\
&=\a ^{-2i}ab^{-1}x^2+\a xy+yx+\a^{2i+1}ba^{-1}y^2=f, \\
\Psi ^{2i+1}\Psi ^{2i+2} & =(\a^{-i}b^{-1}x+\a^{i+1}a^{-1}y)(\a ^{-i-1}ax+\a^{i+1}by) \\
& =(\a ^{-2i-1}ab^{-1}x^2+\a xy+yx+\a^{2i+2}ba^{-1}y^2)=f, 
\end{align*}
so $\psi=\{\psi^i\}_{i\in \ZZ}\in \NMF_S(f)$, hence $\phi=\{\phi^i\}_{i\in \ZZ}\in \NMF_S(f)$.
\end{example} 

\begin{lemma} \label{lem.dmf} 
Let $S$ be a (graded) ring and $f\in S$ a (homogeneous) element.  If $\phi=\{\phi^i:F^{i+1}\to F^i\}_{i\in \ZZ}$ is a noncommutative (graded) right matrix factorization of $f$ over $S$, then 
\begin{align*}
& \Hom_S(\phi , S):=\{\Hom_S(\phi^{-i-1}, S):\Hom_S(F^{-i-1}, S)\to \Hom_S(F^{-i}, S)\}_{i\in \ZZ}
\end{align*} 
is a noncommutative (graded) left matrix factorization of $f$ over $S$. 
\end{lemma}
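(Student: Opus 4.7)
The plan is to apply the contravariant functor $\Hom_S(-,S)$ termwise to $\phi$ and verify the defining requirements of a noncommutative (graded) left matrix factorization. First I would set $G^i:=\Hom_S(F^{-i},S)$ and $\psi^i:=\Hom_S(\phi^{-i-1},S):G^{i+1}\to G^i$. Since $F^{-i}\cong S^r$ is a free right $S$-module of rank $r$ (of the form $\bigoplus_{s=1}^r S(-m_{-i,s})$ in the graded case), its $S$-dual $G^i$ is a free left $S$-module of rank $r$ (equal to $\bigoplus_{s=1}^r S(m_{-i,s})$ in the graded case via the standard identification $\Hom_S(S(-m),S)\cong S(m)$). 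Hence each $\psi^i$ is a (graded) left $S$-module homomorphism between free left $S$-modules of constant rank $r$, which is the first requirement.

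The second step is the composition identity. Using contravariance of $\Hom_S(-,S)$ together with the matrix factorization property of $\phi$ applied at index $-i-2$,
\[\psi^i\psi^{i+1}=\Hom_S(\phi^{-i-1},S)\circ\Hom_S(\phi^{-i-2},S)=\Hom_S(\phi^{-i-2}\phi^{-i-1},S)=\Hom_S(f\cdot,S).\]

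Finally I would verify that, under the identification $G^i\cong S^r$ as a free left $S$-module, the map $\Hom_S(f\cdot,S)$ coincides with the ``scalar $f$'' map appearing in the left-handed version of the defining diagram. Concretely, upon identifying $\Hom_S(S^r,S)\cong S^r$ via $g\mapsto(g(e_1),\dots,g(e_r))$, precomposition with the map $v\mapsto fv$ sends $(b_1,\dots,b_r)$ to $(b_1f,\dots,b_rf)$, which is right multiplication by $f$ on $S^r$; this is the natural left $S$-module endomorphism of $S^r$, playing the role analogous to the left multiplication by $f$ used in the right matrix factorization.

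The main issue is purely bookkeeping: one must keep track of the left/right module conventions together with the index reversal $i\mapsto -i-1$ that flips the direction of the factorization, and in the graded case the compatibility of degree shifts under duality. No substantive difficulty arises beyond this.
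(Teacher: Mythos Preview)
Your proposal is correct and follows essentially the same argument as the paper: apply the contravariant functor $\Hom_S(-,S)$, use contravariance to compute $\psi^i\psi^{i+1}=\Hom_S(\phi^{-i-2}\phi^{-i-1},S)=\Hom_S(f\cdot,S)=\cdot f$. The paper compresses this into a single line, while you spell out the bookkeeping on ranks, graded shifts, and the identification of $\Hom_S(f\cdot,S)$ with right multiplication by $f$; but the substance is identical.
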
 

\begin{proof} 
We have $\Hom_S(\phi ^{-i-1}, S)\Hom_S(\phi ^{-i-2}, S)=\Hom_S(\phi^{-i-2}\phi ^{-i-1}, S)=\Hom_S(f\cdot , S)=\cdot f:\Hom_S(F^{-i-2}, S)\to \Hom_S(F^{-i}, S)$, so the result follows. 
\end{proof} 

Let $S$ be a (graded) ring, $f\in S$ a (homogeneous) element, and $A=S/(f)$.
{\bf We tacitly assume that $f$ is a nonzero non-unit element for the rest of the paper, i.e., $\deg f\geq 1$ in the graded case}.  
For a noncommutative (graded) right matrix factorization $\phi$ of $f$ over $S$, we define the complex $C (\phi)$ of (graded) right $A$-modules by 
$$\begin{CD} \cdots @>\overline {\phi^2} >> \overline {F^2} @>\overline {\phi^1} >> \overline {F^1} @>\overline {\phi^0} >> \overline {F^0} @>\overline {\phi^{-1}}>> \overline {F^{-1}}  @>\overline {\phi^{-2}}>> \cdots \end{CD}$$
where we use $\overline{(-)}$ to denote $(-)\otimes_S A$.
Since $\overline {\phi^i}\; \overline {\phi ^{i+1}}=\overline {\phi^i\phi^{i+1}}=\overline {f\cdot}=0$, $C (\phi)$ is in fact a complex of (graded) right $A$-modules.
If $\mu :\{\phi^i:F^{i+1}\to F^i\}_{i\in \ZZ}\to \{\psi^i:G^{i+1}\to G^i\}_{i\in \ZZ}$ is a morphism of noncommutative (graded) right matrix factorizations, 
then the commutative diagram
$$\begin{CD}
F^{i+1} @>\phi ^i>> F^i \\
@V\mu ^{i+1}VV @VV\mu ^iV \\
G^{i+1} @>\psi^i>> G^i
\end{CD}$$
in $\Mod S$ ($\GrMod S$) induces the commutative diagram 
$$\begin{CD}
\overline {F^{i+1}} @>\overline {\phi ^i}>> \overline {F^i} \\
@V\overline{\mu ^{i+1}}VV @VV\overline {\mu ^i}V \\
\overline {G^{i+1}} @>\overline {\psi^i}>> \overline {G^i}
\end{CD}$$
in $\Mod A$ ($\GrMod A$) for every $i\in \ZZ$, so $C$ is a functor from the category of noncommutative (graded) right matrix factorizations to the category of cochain complexes of (graded) right $A$-modules.  

\section{Twisting Systems} 

Twist is a nice operation which is available only on graded rings.  In this section, we only deal with graded algebras. 

\begin{definition}[{\cite[Definition 2.1]{Zh}}]
 Let $S$ be a graded algebra.  A \emph{twisting system} on $S$ is a sequence $\theta=\{\theta_i\}_{i\in \ZZ}$ of graded $k$-linear automorphisms of $S$ such that $\theta_i(a\theta_j(b))=\theta_i(a)\theta_{i+j}(b)$ for every $i, j\in \ZZ$ and every $a\in S_j, b\in S$.  The \emph{twisted graded algebra} of $S$ by a twisting system $\theta $ is a graded algebra $S^{\theta}$ where $S^{\theta}=S$ as a graded $k$-vector space with the new multiplication $a^{\theta}b^{\theta}=(a\theta_{i}(b))^{\theta}$ for $a^{\theta}\in S^{\theta}_i, b^{\theta}\in S^{\theta}$.  Here we write $a^{\theta}\in S^{\theta}$ for $a\in S$ when viewed as an element of $S^{\theta}$ and the product $a\theta_{i}(b)$ is computed in $S$.  
\end{definition}  

Let $S$ be a graded algebra.  If $\s\in \GrAut S$, then $\{\s^i\}_{i\in \ZZ}$ is a twisting system of $S$.  In this case, we simply write $S^{\s}:=S^{\{\s^i\}}$.  
By \cite [Proposition 2.2, Proposition 2.4]{Zh}, {\bf we tacitly assume that $\theta_0=\id$ for every twisting system $\theta$ for the rest of the paper.}  We recall some results from \cite {Zh} which are needed in this paper.  

\begin{lemma}[{\cite [Proposition 2.5]{Zh}}] \label{lem.gra} 
Let $S$ be a graded algebra.  
If $\theta=\{\theta_i\}_{i\in \ZZ}$ is a twisting system on $S$, then  $\theta^{-1}=\{\theta_i^{-1}\}_{i\in \ZZ}$ is a twisting system on $S^{\theta}$ such that $(S^{\theta})^{\theta^{-1}}=S$. 
\end{lemma}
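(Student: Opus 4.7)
The statement is a purely formal verification, so the plan is to check the three defining properties of a twisting system for $\theta^{-1}$ on $S^\theta$ and then to recover the original multiplication of $S$ from $(S^\theta)^{\theta^{-1}}$. I will use the notational convention $\theta_j^{-1}(b^\theta) = (\theta_j^{-1}(b))^\theta$, i.e., $\theta_j^{-1}$ is applied on the underlying vector space $S = S^\theta$.

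First, since $S^\theta$ and $S$ coincide as graded $k$-vector spaces, each $\theta_i^{-1}$ is automatically a graded $k$-linear automorphism of $S^\theta$; and $\theta_0^{-1} = \id$ since $\theta_0 = \id$. The substantive step is the twisting identity for $\theta^{-1}$: for $a \in S_j$ and $b \in S$, I need
\[
\theta_i^{-1}\!\bigl(a^\theta \cdot_\theta \theta_j^{-1}(b^\theta)\bigr) = \theta_i^{-1}(a^\theta) \cdot_\theta \theta_{i+j}^{-1}(b^\theta).
\]
The left-hand side unwinds, by the definition of $\cdot_\theta$ together with $\theta_j \theta_j^{-1} = \id$, to $(\theta_i^{-1}(ab))^\theta$. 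The right-hand side expands, using that $\theta_i^{-1}(a) \in S_j$ since $\theta_i^{-1}$ is graded, to $\bigl(\theta_i^{-1}(a)\cdot \theta_j(\theta_{i+j}^{-1}(b))\bigr)^\theta$. Applying $\theta_i$ to the inside of the latter and invoking the original twisting system axiom for $\theta$ with arguments $\theta_i^{-1}(a) \in S_j$ and $\theta_{i+j}^{-1}(b)$ gives $ab$; equivalently, the inside is $\theta_i^{-1}(ab)$, matching the left-hand side.

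Finally, for the identity $(S^\theta)^{\theta^{-1}} = S$, the underlying graded vector spaces trivially coincide, so I only need to match multiplications. For $a \in S_i$ and $b \in S$, the iterated twisting gives
\[
a \cdot_{\theta^{-1}} b = a^\theta \cdot_\theta \theta_i^{-1}(b^\theta) = \bigl(a\,\theta_i(\theta_i^{-1}(b))\bigr)^\theta = (ab)^\theta,
\]
which corresponds to $ab \in S$, as required.

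The only place where there is any real content is the twisting identity step, because one must track both the grading shift (to ensure $\theta_i^{-1}(a)$ lies in $S_j$, so that the axiom applies with the correct index) and the interplay between $\theta_j$ and $\theta_j^{-1}$ inside the twisted product. Beyond that bookkeeping, everything is formal cancellation, so I do not anticipate any genuine obstacle.
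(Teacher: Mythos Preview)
Your verification is correct. Note, however, that the paper does not actually prove this lemma: it is quoted verbatim from \cite[Proposition 2.5]{Zh} and stated without proof. Your argument is the standard direct check and matches what one finds in Zhang's paper, so there is nothing further to compare.
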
 

Let $S$ be a graded algebra.  For $0\neq \l\in k$, the map $\e_{\l}:S\to S$ defined by $\e_{\l}(a)=\l^ia$ for $a\in S_i$ is a graded algebra automorphism of $S$.  We will leave the reader to check the following lemma. 

\begin{lemma} \label{lem.el}
Let $S$ be a graded algebra and $\theta=\{\theta_i\}_{i\in \ZZ}$ a twisting system on $S$.
For $0\neq \l\in k$, $\e_{\l}\theta :=\{ \e_{\l}^ i \theta_i \}_{i \in \ZZ}$ is a twisting system on $S$ and  the map $\phi:S^{\theta}\to S^{\e_{\l}\theta}$ defined by $\phi (a^{\theta})=\l^{i(i-1)/2}a^{\e_{\l}\theta}$ for $a\in S_i$ is an isomorphism of graded algebras. 
\end{lemma}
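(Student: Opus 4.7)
The plan is to verify the two claims separately, treating everything on homogeneous components since both $\theta_i$ and $\e_\l$ are graded.

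First, I would check that $\e_\l\theta$ is a twisting system. Each $\e_\l^i\theta_i$ is a composition of graded $k$-linear automorphisms and hence is itself one, and $(\e_\l\theta)_0 = \id$ since $\theta_0=\id$. For the cocycle identity, I would take $a \in S_j$ and $b \in S_k$ homogeneous and compute both sides of
\[
(\e_\l\theta)_i\bigl(a\,(\e_\l\theta)_j(b)\bigr) \stackrel{?}{=} (\e_\l\theta)_i(a)\cdot (\e_\l\theta)_{i+j}(b).
\]
Since $\theta_j(b) \in S_k$, the left side produces a scalar $\l^{jk}$ which pulls past $\theta_i$ (and an additional $\l^{i(j+k)}$ from the outer $\e_\l^i$), giving $\l^{ij+ik+jk}\theta_i(a)\theta_{i+j}(b)$ after applying the twisting axiom for $\theta$. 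The right side gives $\l^{ij}\theta_i(a)\cdot\l^{(i+j)k}\theta_{i+j}(b) = \l^{ij+ik+jk}\theta_i(a)\theta_{i+j}(b)$. These agree.

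Next I would verify that $\phi\colon S^\theta\to S^{\e_\l\theta}$ is an isomorphism of graded algebras. It is degree-preserving and a $k$-linear bijection by construction (componentwise multiplication by the nonzero scalar $\l^{i(i-1)/2}$), and $\phi(1^\theta)=1^{\e_\l\theta}$ since $0(0-1)/2=0$. For multiplicativity, take $a\in S_i$, $b\in S_j$ and compute
\[
\phi(a^\theta)\phi(b^\theta) = \l^{i(i-1)/2+j(j-1)/2}\bigl(a\cdot (\e_\l\theta)_i(b)\bigr)^{\e_\l\theta} = \l^{i(i-1)/2+j(j-1)/2+ij}\bigl(a\theta_i(b)\bigr)^{\e_\l\theta},
\]
while
\[
\phi(a^\theta b^\theta) = \phi\bigl((a\theta_i(b))^\theta\bigr) = \l^{(i+j)(i+j-1)/2}\bigl(a\theta_i(b)\bigr)^{\e_\l\theta},
\]
because $a\theta_i(b)\in S_{i+j}$. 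Equality then reduces to the elementary identity $i(i-1)/2 + j(j-1)/2 + ij = (i+j)(i+j-1)/2$, which expands on both sides to $(i^2+j^2+2ij-i-j)/2$.

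Neither step is really an obstacle; the whole lemma is a careful bookkeeping exercise. The only nontrivial ingredient is the quadratic identity above, which is exactly the reason the exponent $i(i-1)/2$ appears in the definition of $\phi$ — it is the unique (up to an additive linear function of $i$) choice that makes $\phi$ a ring map, in analogy with the standard $q$-exponential cocycle. Once these two computations are recorded, the conclusion that $\phi$ is an isomorphism of graded $k$-algebras is immediate.
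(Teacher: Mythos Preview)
Your proof is correct. The paper itself omits the proof entirely (``We will leave the reader to check the following lemma''), so your computation is precisely the verification the authors leave to the reader, and there is nothing further to compare.
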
  

\begin{remark} 
\label{rem.imp} 
Let $S$ be a graded algebra, $f\in S_d$, and $\theta$ a twisting system on $S$.
\begin{enumerate}
\item{} Suppose that $\theta_i(f)=\l_if$ for some $0\neq \l_i\in k$ for every $i\in \ZZ$.  Since $\theta_i(1)=1$ for every $i\in \ZZ$ by \cite [Proposition 2.2]{Zh},  
$$\l_j\l_if = \l_j\theta_i(f) = \theta _i(\l_jf) = \theta_i(\theta_j(f))= \theta_i(1)\theta_{i+j}(f)=\l_{i+j}f, $$
so $\l_i=\l^i$ for some $0\neq \l\in k$ for every $i\in \ZZ$.  
\item{} Suppose that $\theta_i(f)=\l^if$ for some $0\neq \l\in k$ for every $i\in \ZZ$. If $\theta'=\e_{\l^{-1/d}}\theta$, then 
$$\theta'_i(f)=\e_{\l^{-1/d}}^i\theta_i(f)=\l^{(-1/d)di}\l^if=f.$$
Moreover, there exists a graded algebra isomorphism $\phi:S^{\theta}\to S^{\theta'}$ such that $\phi(f^{\theta})=\l^{d(d-1)/2}f^{\theta'}$ by Lemma \ref{lem.el}, so
$$\NMF^{\ZZ}_{S^{\theta}}(f^{\theta})\cong \NMF^{\ZZ}_{S^{\theta'}}(\l ^{d(d-1)/2}f^{\theta'})=\NMF^{\ZZ}_{S^{\theta'}}(f^{\theta'})$$
by Remark \ref{rem.la} (5).
\item{}
By (1) and (2), we may replace the condition $\theta_i(f)=\l_if$ for some $0\neq \l_i\in k$ for every $i\in \ZZ$ by the simpler condition $\theta_i(f)=f$ for every $i\in \ZZ$ in some situations (see Theorem \ref{thm.cckm}).  
\end{enumerate}
\end{remark} 

Let $S$ be a graded algebra and $\theta $ a twisting system on $S$.  For $M\in \GrMod S$, we define $M^{\theta}\in \GrMod S^{\theta}$ by $M^{\theta}=M$ as a graded $k$-vector space with the new action $m^{\theta}a^{\theta}=(m\theta_i(a))^{\theta}$ for $m^{\theta}\in M^{\theta}_i, a^{\theta}\in S^{\theta}$.  For $\phi:M\to N$ in $\GrMod S$, we define $\phi ^{\theta}:M^{\theta}\to N^{\theta}$ in $\GrMod S^{\theta}$ by $\phi^{\theta}(m^{\theta})=\phi (m)^{\theta}$.  In fact, for $m^{\theta}\in M_i^{\theta}, a^{\theta}\in S^{\theta}$, 
\begin{align*}
\phi^{\theta} (m^{\theta}a^{\theta}) & = \phi ^{\theta}((m\theta_i(a))^{\theta})=\phi (m\theta_i(a))^{\theta} =(\phi (m)\theta_i(a))^{\theta}
=\phi (m)^{\theta}a^{\theta}=\phi^{\theta}(m^{\theta})a^{\theta},
\end{align*}
so $\phi^{\theta}$ is a graded right $S^{\theta}$-module homomorphism.  

\begin{theorem}[{\cite [Theorem 3.1]{Zh}}] \label{thm.Z}
For a graded algebra $S$ and a twisting system $\theta$ on $S$, $(-)^{\theta}:\GrMod S\to \GrMod S^{\theta}$ is an equivalence functor.  
\end{theorem}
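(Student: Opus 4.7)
The plan is to exhibit $(-)^{\theta^{-1}}\colon \GrMod S^{\theta}\to \GrMod S$ as a quasi-inverse to $(-)^{\theta}$. This functor is well-defined by Lemma \ref{lem.gra}, which says that $\theta^{-1}=\{\theta_i^{-1}\}_{i\in\ZZ}$ is a twisting system on $S^{\theta}$ with $(S^{\theta})^{\theta^{-1}}=S$. The verification of the equivalence then splits into two parts: checking that $(-)^{\theta}$ is fully faithful, and checking that the composition $(-)^{\theta^{-1}}\circ (-)^{\theta}$ equals the identity on $\GrMod S$ (and symmetrically for the other composition).

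For fully faithfulness, the key point is that a morphism $\psi\colon M^{\theta}\to N^{\theta}$ in $\GrMod S^{\theta}$ is the same datum as a degree-preserving $k$-linear map $\phi\colon M\to N$, since the underlying graded $k$-vector spaces coincide; I claim $\psi$ is $S^{\theta}$-linear if and only if $\phi$ is $S$-linear. Indeed, for $m\in M_i$ and $a\in S_j$, the $S^{\theta}$-linearity condition $\psi(m^{\theta}a^{\theta})=\psi(m^{\theta})a^{\theta}$ unwinds, via the definition of the twisted actions on both sides, to $\phi(m\,\theta_i(a))=\phi(m)\,\theta_i(a)$ in $N$. Since $\theta_i$ is a $k$-linear bijection on $S$, this is equivalent to $\phi(mb)=\phi(m)b$ for every $b\in S$. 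Hence $(-)^\theta$ induces a natural bijection $\Hom_S(M,N)\to \Hom_{S^{\theta}}(M^{\theta},N^{\theta})$.

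For the round-trip identity, I would show directly that $(M^{\theta})^{\theta^{-1}}=M$ in $\GrMod S$. The automorphism $\theta_i^{-1}$ of $S^\theta$ (appearing in the twisting system $\theta^{-1}$ on $S^\theta$) sends $a^{\theta}\mapsto (\theta_i^{-1}(a))^{\theta}$ on the underlying vector space. Hence, writing out the action in $(M^{\theta})^{\theta^{-1}}$ for $m\in M_i$ and $a\in S_j$,
\[
(m^{\theta})^{\theta^{-1}}\cdot (a^{\theta})^{\theta^{-1}}=\bigl(m^{\theta}\cdot(\theta_i^{-1}(a))^{\theta}\bigr)^{\theta^{-1}}=\bigl((m\,\theta_i(\theta_i^{-1}(a)))^{\theta}\bigr)^{\theta^{-1}}=ma,
\]
which matches the original $S$-action on $M$. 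The symmetric calculation gives $(N)^{\theta^{-1}\theta}=N$ for $N\in\GrMod S^\theta$. Together with fully faithfulness, these identifications establish the equivalence.

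The only real bookkeeping hurdle is keeping straight the distinction between applying $\theta_i^{-1}$ to an element of $S$ and to the ``same'' element viewed inside $S^{\theta}$, and matching degree indices on both sides of the twisted multiplication (in particular the index $i$ that determines which $\theta_i$ gets inserted must be read off from the degree of the module element, not the ring element). Once that convention is pinned down, the entire argument is a short formal calculation; I do not anticipate any deeper obstacle.
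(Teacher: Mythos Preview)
Your argument is correct. Note, however, that the paper does not actually give its own proof of this theorem: it is stated with a citation to \cite[Theorem 3.1]{Zh} and no proof is supplied. So there is nothing in the paper to compare your approach against beyond the surrounding remarks. Your strategy---using Lemma~\ref{lem.gra} to produce $(-)^{\theta^{-1}}$ as a quasi-inverse and verifying the round-trip identity by a direct unwinding of the twisted actions---is exactly the standard one, and is essentially how Zhang's original proof proceeds. The paper has already recorded the forward half of your full-faithfulness claim (that $\phi^{\theta}$ is $S^{\theta}$-linear whenever $\phi$ is $S$-linear) in the paragraph just before the theorem; your converse direction, using bijectivity of each $\theta_i$, completes it cleanly.
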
 

\begin{lemma} \label{lem.si} 
If $S$ is a graded algebra and $\theta$ is a twisting system on $S$, then the map $\theta_{m}:S^{\theta}(-m)\to S(-m)^{\theta}; \; a^{\theta}\mapsto \theta_{m}(a)^{\theta}$ is a graded right $S^{\theta}$-module isomorphism. 
\end{lemma}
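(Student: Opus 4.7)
The plan is to check that $\theta_m$ is a degree-preserving $k$-linear bijection --- which is immediate from the definitions --- and that it respects the right $S^{\theta}$-action, which reduces to a single application of the twisting axiom.

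As graded $k$-vector spaces, both $S^{\theta}(-m)$ and $S(-m)^{\theta}$ agree with the degree-shift $S(-m)$ of the underlying space of $S$. Each $\theta_i$ is by definition a graded $k$-linear automorphism of $S$, so $\theta_m : S \to S$ preserves degree and is a $k$-linear bijection; consequently $a^{\theta} \mapsto \theta_m(a)^{\theta}$ is piece by piece a $k$-linear bijection between the two modules, with inverse $a^{\theta} \mapsto \theta_m^{-1}(a)^{\theta}$ (the inverse twisting system $\theta^{-1}$ being available by Lemma \ref{lem.gra}).

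For $S^{\theta}$-linearity, fix a homogeneous element $a^{\theta} \in S^{\theta}(-m)_i$, so that $a \in S_{i-m}$, and an arbitrary $b^{\theta} \in S^{\theta}_j$. The module structure on $S^{\theta}(-m)$ is the shifted right regular action of $S^{\theta}$, so $a^{\theta}b^{\theta} = (a\theta_{i-m}(b))^{\theta}$, and applying the map yields $\theta_m(a\theta_{i-m}(b))^{\theta}$. Since $a \in S_{i-m}$, the twisting axiom $\theta_i(a\theta_j(b)) = \theta_i(a)\theta_{i+j}(b)$, applied with indices $(m, i-m)$, rewrites this as $(\theta_m(a)\theta_i(b))^{\theta}$. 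On the target side, $\theta_m(a) \in S_{i-m} = S(-m)_i$, so $\theta_m(a)^{\theta} \in (S(-m)^{\theta})_i$, and the twisted action on $S(-m)^{\theta}$ --- whose underlying right $S$-action on $S(-m)$ is ordinary multiplication in $S$ --- gives $\theta_m(a)^{\theta}b^{\theta} = (\theta_m(a)\theta_i(b))^{\theta}$. The two expressions coincide, completing the verification.

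No genuine obstacle appears; the only point requiring care is that the shift forces the source side to use $\theta_{i-m}$ while the target side uses $\theta_i$, and the twisting axiom is precisely what reconciles the two.
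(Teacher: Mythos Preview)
Your proof is correct. The paper itself does not give an argument here; it simply points to the proof of \cite[Theorem~3.4]{Zh}, so your direct verification is more self-contained than what the paper provides. The key step you isolate --- that the shift forces the index $i-m$ on the source and $i$ on the target, with the twisting identity $\theta_m(a\theta_{i-m}(b))=\theta_m(a)\theta_i(b)$ (for $a\in S_{i-m}$) bridging the two --- is exactly the content of the relevant computation in Zhang's paper, so there is no substantive difference in method, only in whether the check is written out or cited.
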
 

\begin{proof} This follows from the proof of \cite [Theorem 3.4]{Zh}. 
\end{proof} 

By Lemma \ref{lem.si}, a map $\phi:S(-m-m')\to S(-m)$ in $\GrMod S$ induces the map 
$$\begin{CD} \widetilde {\phi^{\theta}}:S^{\theta}(-m-m') @>\theta_{m+m'}>\cong > S(-m-m')^{\theta} @>\phi^{\theta}>> S(-m)^{\theta} @>\theta_{m}^{-1}>\cong > S^{\theta}(-m),\end{CD}$$
in $\GrMod S^{\theta}$.  The map $\widetilde {\phi^{\theta}}$ is defined by $\widetilde {\phi^{\theta}}(c^{\theta})=(\theta_{m}^{-1}\phi\theta_{m+m'}(c))^{\theta}$.  If $\psi:S(-m-m'-m'')\to S(-m-m')$ is a map in $\GrMod S$, then 
\begin{align*}
\widetilde {\phi^{\theta}}\widetilde {\psi^{\theta}}(c^{\theta}) 
& =\widetilde {\phi^{\theta}}((\theta_{m+m'}^{-1}\psi\theta_{m+m'+m''}(c))^{\theta})=(\theta_{m}^{-1}\phi\theta_{m+m'}\theta_{m+m'}^{-1}\psi\theta_{m+m'+m''}(c))^{\theta} \\
& = (\theta_{m}^{-1}\phi\psi\theta_{m+m'+m''}(c))^{\theta} = \widetilde {(\phi\psi)^{\theta}}(c^{\theta}),
\end{align*}
so $\widetilde {\phi^{\theta}}\widetilde {\psi^{\theta}}=\widetilde {(\phi\psi)^{\theta}}$.  

The map $\widetilde {\phi^{\theta}}$ must be the left multiplication of a certain element in $S^{\theta}_{m'}$.    In fact, if $\phi=a\cdot :S(-m-m')\to S(-m)$ for $a\in S_{m'}$, 
then   
\begin{align*}
\widetilde {\phi^{\theta}}(c^{\theta}) & 
= (\theta_{m}^{-1}\phi\theta_{m+m'}(c))^{\theta} 
= \theta_{m}^{-1}(a\theta_{m+m'}(c))^{\theta}
= \theta_{m}^{-1}(\theta_m(\theta_m^{-1}(a))\theta_{m+m'}(c))^{\theta}\\
&=\theta_{m}^{-1}\theta_m(\theta_m^{-1}(a)\theta_{m'}(c))^{\theta}
= (\theta_m^{-1}(a)\theta_{m'}(c))^{\theta}=\theta_m^{-1}(a)^{\theta}c^{\theta}, 
\end{align*}
so $\widetilde {\phi^{\theta}}$ is the left multiplication of $\theta_{m}^{-1}(a)^{\theta}\in S^{\theta}_{m'}$.  That is, we have the following commutative diagram
\[\xymatrix@R=2pc@C=6pc{
S(-m-m')^{\theta}  \ar[r]^(0.55){(a\cdot)^{\theta}} &S(-m)^{\theta}  \\
S^{\theta}(-m-m') \ar[r]^(0.55){\theta_m^{-1}(a)^{\theta}\cdot} \ar[u]^{\theta_{m+m'}}_{\cong}  &S^{\theta}(-m). \ar[u]_{\theta_{m}}^{\cong}
}
\]

If $\phi=(\phi_{ij})$ is a map of graded free right $S$-modules, 
then we define the map $\widetilde {\phi^{\theta}}=(\widetilde {\phi^{\theta} _{ij}})$ of graded free right $S^{\theta}$-modules. 

\begin{theorem} \label{thm.nmft} \label{thm.cckm}
Let $S$ be a graded algebra, and $f\in S_d$.  If $\theta$ is a 
twisting system on $S$ such that $\theta_i(f)=\l^if$ for some $0\neq \l\in k$ and for every $i\in \ZZ$, 
then 
$$\operatorname {NMF}^{\ZZ}_S(f)\to \operatorname {NMF}^{\ZZ}_{S^{\theta}}(f^{\theta}); \; \{\phi^i\}_{i \in \ZZ}\mapsto \{(\phi ^i)^{\theta}\}_{i \in \ZZ}$$ 
is an equivalence functor. 
\end{theorem}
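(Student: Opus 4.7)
The plan is to verify the functor is well-defined and then invoke \thmref{thm.Z} together with \lemref{lem.gra} to produce an inverse, by applying the theorem itself to the triple $(S^{\theta}, \theta^{-1}, f^{\theta})$.

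First, given $\phi = \{\phi^i : F^{i+1} \to F^i\} \in \NMF^{\ZZ}_S(f)$ with $F^i = \bigoplus_s S(-m_{is})$, the twist $(-)^{\theta}$ of \thmref{thm.Z} yields $S^{\theta}$-linear maps $(\phi^i)^{\theta} : (F^{i+1})^{\theta} \to (F^i)^{\theta}$. By \lemref{lem.si}, each $(F^i)^{\theta}$ is isomorphic to the graded free $S^{\theta}$-module $\bigoplus_s S^{\theta}(-m_{is})$ via $\bigoplus_s \theta_{m_{is}}$, so under this identification $(\phi^i)^{\theta}$ becomes left multiplication by a matrix $\widetilde{(\phi^i)^{\theta}}$ over $S^{\theta}$. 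The computation preceding the theorem then yields
$$
\widetilde{(\phi^i)^{\theta}}\, \widetilde{(\phi^{i+1})^{\theta}} \;=\; \widetilde{(\phi^i \phi^{i+1})^{\theta}} \;=\; \widetilde{(f\cdot)^{\theta}},
$$
and on each summand $S^{\theta}(-m_{i+2,s}) \to S^{\theta}(-m_{is})$ the right-hand side is left multiplication by $\theta_{m_{is}}^{-1}(f)^{\theta}$. The hypothesis $\theta_i(f) = \l^i f$ gives $\theta_{m_{is}}^{-1}(f) = \l^{-m_{is}} f$, so this scalar equals $\l^{-m_{is}} f^{\theta}$, a unit-multiple of $f^{\theta}$ in $S^{\theta}$. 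Hence by \remref{rem.lambda} (4) applied component-wise, $\{(\phi^i)^{\theta}\}_{i \in \ZZ}$ determines an object of $\NMF^{\ZZ}_{S^{\theta}}(f^{\theta})$, and morphisms are transported in the same functorial manner.

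Full faithfulness is then immediate: a morphism of matrix factorizations is a family of maps in $\GrMod S$ satisfying commuting-square conditions, and \thmref{thm.Z} gives bijections $\Hom_{\GrMod S}(F^i, G^i) \cong \Hom_{\GrMod S^{\theta}}((F^i)^{\theta}, (G^i)^{\theta})$ carrying commuting squares to commuting squares. For essential surjectivity, use \lemref{lem.gra}: $\theta^{-1} = \{\theta_i^{-1}\}_{i \in \ZZ}$ is a twisting system on $S^{\theta}$ with $(S^{\theta})^{\theta^{-1}} = S$. Since $\theta_i(f) = \l^i f$ gives $\theta_i^{-1}(f) = \l^{-i} f$, and since $\theta_i^{-1}$ acts on $S^{\theta}$ as a graded $k$-linear map agreeing with $\theta_i^{-1}$ on the underlying graded vector space of $S$, we have $\theta_i^{-1}(f^{\theta}) = (\l^{-1})^i f^{\theta}$ in $S^{\theta}$. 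Thus the theorem applied to $(S^{\theta}, \theta^{-1}, f^{\theta})$ produces a functor
$$
\NMF^{\ZZ}_{S^{\theta}}(f^{\theta}) \to \NMF^{\ZZ}_{(S^{\theta})^{\theta^{-1}}}((f^{\theta})^{\theta^{-1}}) = \NMF^{\ZZ}_S(f), \quad \{\psi^i\}_{i\in\ZZ} \mapsto \{(\psi^i)^{\theta^{-1}}\}_{i\in\ZZ}.
$$
Since $(-)^{\theta}$ and $(-)^{\theta^{-1}}$ are mutually quasi-inverse on $\GrMod S$ and $\GrMod S^{\theta}$ by \thmref{thm.Z}, and since the natural isomorphisms exhibiting this apply componentwise to any matrix factorization as chain isomorphisms, the induced functors between $\NMF^{\ZZ}_S(f)$ and $\NMF^{\ZZ}_{S^{\theta}}(f^{\theta})$ are mutually quasi-inverse as well.

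The main technical obstacle is the well-definedness step: correctly tracking $\widetilde{(\phi^i)^{\theta}} \widetilde{(\phi^{i+1})^{\theta}}$ through the identifications of \lemref{lem.si}, verifying that the resulting scalar $\l^{-m_{is}}$ appearing on each summand is exactly the one that can be absorbed by \remref{rem.lambda} (4) to yield a genuine matrix factorization of $f^{\theta}$. Once this compatibility is secured, everything else is a formal consequence of the graded module equivalence $(-)^{\theta}$.
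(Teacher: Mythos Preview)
Your argument is correct and follows the same overall architecture as the paper's proof: verify well-definedness using \lemref{lem.si}, then obtain the inverse from \lemref{lem.gra}. The one structural difference is that the paper first invokes \remref{rem.imp} (3) to reduce to the case $\theta_i(f)=f$; after that reduction $\theta_{m_{is}}^{-1}(f)^{\theta}=f^{\theta}$ on the nose, so no scalars appear and the commutative diagram is immediate. You instead carry $\lambda$ through and absorb the resulting diagonal scalars at the end.

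One small imprecision: your appeal to \remref{rem.lambda} (4) ``component-wise'' does not literally apply, since that remark rescales $\phi^i$ by a single unit $\lambda_i$ depending only on $i$, whereas your scalars $\lambda^{-m_{is}}$ vary with the summand index $s$. What you actually need is the more elementary observation that the definition of a noncommutative graded matrix factorization only asks for \emph{some} isomorphisms making $\phi^i\phi^{i+1}$ into $f\cdot$, and the diagonal matrix $\operatorname{diag}(\lambda^{m_{i1}},\dots,\lambda^{m_{ir}})$ does the job. This is trivially fixable and does not affect the validity of the argument; the paper's preliminary reduction to $\lambda=1$ simply sidesteps the issue.
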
 

\begin{proof}
By Remark \ref{rem.imp} (3), we may assume that $\theta_i(f)=f$ for every $i\in \ZZ$.
Since $(-)^{\theta}:\GrMod S\to \GrMod S^{\theta}$ is a functor by Theorem \ref{thm.Z}, and $\theta_i^{-1}(f)^\theta = f^\theta$ for every $i\in \ZZ$,
if $\{\phi^i\}_{i \in \ZZ}\in \NMF^{\ZZ}_S(f)$, then the commutative diagram 
$$
\xymatrix@R=2pc@C=2pc{
F^{i+2} \ar[d]_{\cong} \ar[r]^{\phi^i\phi^{i+1}} &F^i \ar[d]^{\cong} \\
\bigoplus _{s=1}^rS(-m_{is}-d) \ar[r]^(0.54){f\cdot} &\bigoplus _{s=1}^rS(-m_{is})  
}
$$
induces the commutative diagram
$$
\xymatrix@R=2pc@C=8pc{
(F^{i+2})^{\theta} \ar[d]_{\cong}
\ar[r]^{(\phi^i)^\theta (\phi^{i+1})^\theta\ =\ (\phi^i \phi^{i+1})^\theta}
&(F^i)^{\theta} \ar[d]^{\cong} \\
\bigoplus _{s=1}^rS(-m_{is}-d)^\theta
\ar[r]^(0.54){(f\cdot)^\theta}
&\bigoplus _{s=1}^rS(-m_{is})^\theta\\
\bigoplus _{s=1}^rS^\theta(-m_{is}-d) \ar[u]^{\bigoplus_{s=1}^r \theta_{m_{is}+d}}_{\cong} \ar[r]^(0.54){
 f^\theta \cdot} &\bigoplus _{s=1}^rS^\theta(-m_{is}), \ar[u]_{\bigoplus_{s=1}^r \theta_{m_{is}}}^{\cong}
}
$$
so $\{(\phi ^i)^{\theta}\}_{i \in \ZZ}\in \NMF^{\ZZ}_{S^{\theta}}(f^{\theta})$.

Since $(-)^{\theta}:\GrMod S\to \GrMod S^{\theta}$ is a functor, a commutative diagram 
$$\begin{CD}
F^{i+1} @>\phi ^i>> F^i \\
@V\mu ^{i+1}VV @VV\mu ^iV \\
G^{i+1} @>\psi^i>> G^i
\end{CD}$$
in $\GrMod S$ induces a commutative diagram 
$$\begin{CD}
(F^{i+1})^{\theta} @>(\phi ^i)^{\theta}>> (F^i)^{\theta} \\
@V(\mu ^{i+1})^{\theta}VV @VV(\mu ^i)^{\theta}V \\
(G^{i+1})^{\theta} @>(\psi^i)^{\theta}>> (G^i)^{\theta} 
\end{CD}$$
in $\GrMod S^{\theta}$, so 
$$\operatorname {NMF}^{\ZZ}_S(f)\to \operatorname {NMF}^{\ZZ}_{S^{\theta}}(f^{\theta}); \; \{\phi^i\}_{i \in \ZZ}\mapsto \{(\phi ^i)^{\theta}\}_{i \in \ZZ}$$
is a functor. By Lemma \ref{lem.gra}, 
$$\operatorname {NMF}^{\ZZ}_{S^{\theta}}(f^{\theta})\to \operatorname {NMF}^{\ZZ}_{(S^{\theta})^{\theta^{-1}}}((f^{\theta})^{\theta^{-1}})\cong \operatorname {NMF}^{\ZZ}_{S}(f); \; \{\psi^i\}_{i \in \ZZ}\mapsto \{(\psi ^i)^{\theta^{-1}}\}_{i \in \ZZ}$$ is the inverse functor.  
\end{proof} 

\begin{remark}  If $f$ is a ``regular normal" element and $\nu$ is the ``normalizing" automorphism of $f$, then Cassidy-Conner-Kirkman-Moore \cite {CCKM} defined the notion of twisted matrix factorization of $f$ (see the next section).  In this case, we will show that the category of noncommutative graded matrix factorizations of $f$ is equivalent to the category of twisted graded matrix factorizations of $f$ (Proposition \ref{prop.ctmf}), so Theorem \ref{thm.cckm} was proved in \cite [Theorem 3.6]{CCKM} with the technical condition $\theta_i\nu^{-1} \theta_d=\nu^{-1}\theta_{i+d}$ for every $i\in \ZZ$.  Since Theorem \ref{thm.cckm} does not require $f$ to be regular normal, it is a generalization of \cite [Theorem 3.6]{CCKM}.
\end{remark} 

Let $S$ be a graded algebra, $f\in S_d$, and $\theta$ a 
twisting system on $S$ such that $\theta_i(f)=f$ for all $i\in \ZZ$.  If $\phi=\{\phi^i:F^{i+1}\to F^i\}_{i \in \ZZ}\in \NMF^{\ZZ}_S(f)$ such that $F^i\cong S(-m_i)^r$, then 
$\{\widetilde {(\phi^i)^{\theta}}\}_{i \in \ZZ}=\{\theta_{m_i}^{-1}(\phi^i)^{\theta}\}_{i \in \ZZ}\in \NMF^{\ZZ}_{S^{\theta}}(f^{\theta})$.  
In particular, if $F^0\cong S^r$, $d=2$ and $\theta =\{\s^i\}_{i \in \ZZ}$ for $\s\in \GrAut S$, then $m_i=i$ for all $i\in \ZZ$, so 
$\{\widetilde {(\phi^i)^{\s}}\}_{i \in \ZZ}=\{\s^{-i}(\phi^i)^{\s}\}_{i \in \ZZ}\in \NMF^{\ZZ}_{S^{\s}}(f^{\s})$.  The following simple example illustrates the difference between $\{(\phi^i)^{\theta}\}_{i \in \ZZ}$ and $\{\widetilde {(\phi^i)^{\theta}}\}_{i \in \ZZ}$.  

\begin{example} 
Let $S=k[x, y]$ with $\deg x=\deg y=1$, $f=xy\in S_2$, and $\s\in \GrAut S$ such that $\s(x)=y, \s(y)=x$ so that $\s(f)=f$.  The noncommutative graded right matrix factorization of $f$ over $S$ 
$$\begin{CD}
\phi:\cdots @>y\cdot >> S(-3) @>x\cdot >> S(-2) @>y\cdot >> S(-1) @>x\cdot >> S @>y\cdot >> \cdots 
\end{CD}$$
induces the commutative diagram 
$$\begin{CD}
\phi^{\s}: \cdots @>(y\cdot)^{\s} >> S(-3)^{\s} @>(x\cdot )^{\s}>> S(-2)^{\s} @>(y\cdot)^{\s} >> S(-1)^{\s} @>(x\cdot)^{\s} >> S^{\s} @>(y\cdot)^{\s} >> \cdots \\
& & @V\cong VV @V\cong VV @V\cong VV @V\cong VV  \\
\widetilde {\phi^{\s}}:\cdots @>x^{\s}\cdot >> S^{\s}(-3) @>x^{\s}\cdot >> S^{\s}(-2) @>x^{\s}\cdot >> S^{\s}(-1) @>x^{\s}\cdot >> S^{\s} @>x^{\s}\cdot >> \cdots, \\
\end{CD}$$
It follows that $\widetilde {\phi^{\s}}$ is a noncommutative graded right matrix factorization of $f^{\s}=(xy)^{\s}=x^{\s}x^{\s}$ over $S^{\s}$ in the strict sense, but 
the above commutative diagram shows that ${\phi^{\s}}$ is also (isomorphic to) a noncommutative graded right matrix factorization of $f^{\s}$ over $S^{\s}$. 
\end{example} 
 
With the above example understood, we often identify $\{(\phi^i)^{\theta}\}_{i\in \ZZ}$ with $\{\widetilde {(\phi^i)^{\theta}}\}_{i\in \ZZ}$.  

\begin{example}
Let $S=k\<x, y\>/(x^2, y^2)$ with $\deg x=\deg y=1$, $f=xy+yx\in S_2$, and $A=S/(f)$.
If $\s'=\begin{pmatrix} \a & 0 \\ 0 & 1 \end{pmatrix}\in \GrAut S$, then $\s'(f)=\a f$.  
If we modify $\s'$ as $\s=\begin{pmatrix} \a^{1/2} & 0 \\ 0 & \a^{-1/2} \end{pmatrix}=\a^{-1/2}\s'\in \GrAut S$, then $S^{\s'}\cong S^{\s}$ and $\s(f)=f$.  

For $0\neq a, b\in k$, $\{\phi^i\}_{i\in \ZZ}$ defined by $\Phi^{2i}=ax+by, \Phi^{2i+1}=b^{-1}x+a^{-1}y$ is a noncommutative graded right matrix factorization of $f$ over $S$, so 
$\{\s^{-i}(\phi^i)\}_{i\in \ZZ}$ defined by 
$\Psi^{2i}=\s^{-2i}(\Phi^{2i})=\a^{-i}ax+\a ^{i}by, \Psi^{2i+1}=\s^{-2i-1}(\Phi^{2i+1})=\a^{-(2i+1)/2}b^{-1}x+\a ^{(2i+1)/2}a^{-1}y$, that is, 
$$\Psi ^i=\begin{cases} \a^{-i/2}ax+\a^{i/2}by & \textnormal { if $i$ is even, } \\ 
\a^{-i/2}b^{-1}x+\a^{i/2}a^{-1}y & \textnormal { if $i$ is odd, } 
\end{cases}$$
is a noncommutative graded right matrix factorization of $f^{\s}=\a ^{1/2}xy+\a ^{-1/2}yx$ over $S^{\s}$.  (Compare with Example \ref{ex.2.3}.)  
\end{example} 

\section{Regular Normal Elements} 

\begin{definition}
Let $S$ be a ring, and $f\in S$.  
\begin{enumerate}
\item{} We say that $f$ is \emph{regular} if, for every $a\in S$, $af=0$ or $fa=0$ implies $a=0$. 
\item{} We say that $f$ is \emph{normal} if $Sf=fS$. 
\end{enumerate}
\end{definition} 

\begin{remark}
Let $S$ be a (graded) ring and $f\in S$ a (homogeneous) element (of degree $d$). 
\begin{enumerate}
\item{} $f$ is regular if and only if the map $f\cdot :S\to S$ ($f\cdot :S(-d)\to S$) is an injective (graded) right $S$-module homomorphism and $\cdot f:S\to S$ ($\cdot f:S(-d)\to S$) is an injective (graded) left $S$-module homomorphism. 
\item{} $f$ is regular normal if and only if there exists a unique (graded) ring automorphism $\nu_f$ of $S$ such that $af=f\nu_f(a)$ for $a\in S$.  
We call $\nu := \nu_f$ the \emph{normalizing automorphism} of $f$.  
\end{enumerate}
\end{remark} 

\begin{remark}
Let $S$ be a noetherian AS-regular algebra and $f \in S_d$ a regular normal element.
Then $A= S/(f)$ is a noetherian AS-Gorenstein algebra, which is called a \emph{noncommutative hypersurface} of degree $d$.
\end{remark}

Let $S$ be a ring, $f\in S$, and $\{\phi^i:F^{i+1}\to F^i\}_{i\in \ZZ} \in \NMF_S(f)$.
From now on, we usually assume without loss of generality that $F^i=S^r$ for some $r\in \NN$ and $\phi^i\phi^{i+1}=f\cdot$ for every $i\in \ZZ$, that is, we implicitly use the category $\NMF'_S(f)$ defined in Remark \ref{rem.la} (1) in place of $\NMF_S(f)$.

If $\Phi=(a_{st})$ is a matrix whose entries are (homogeneous) elements in $S$, and $\s$ is a (graded) algebra automorphism of $S$, then we write $\s(\Phi)=(\s(a_{st}))$.  If $\phi$ is a (graded) right $S$-module homomorphism 
given by the left multiplication of $\Phi$, then we write $\s(\phi)$ for the (graded) right $S$-module homomorphism given by the left multiplication of $\s(\Phi)$.  
Let $f\in S$ be a (homogeneous) regular normal element.  Since $f\sf(\Phi)=(f\sf(a_{st}))=(a_{st}f)=\Phi f$, we have $f\sf(\phi)=\phi f$
($f(\sf(\phi)(-d))=\phi f$ where $d=\deg f$).  

\begin{theorem} \label{thm.nmf} 
Let $S$ be a (graded) ring and $f\in S$ a (homogeneous) regular normal element (of degree $d$). 
\begin{enumerate}
\item{} If $\phi$ is a noncommutative (graded) right matrix factorization of $f$ over $S$, then $\phi^{i+2}=\sf(\phi^i)$ ($\phi^{i+2}=\sf(\phi^i)(-d)$) for every $i\in \ZZ$.  It follows that $\phi$ is uniquely determined by $\phi^0$ and $\phi^1$. 
\item{} Let $\phi^0, \phi^1$ be (graded) right $S$-module homomorphisms between (graded) free right $S$-modules of rank $r$ such that $\phi^0\phi^1=f\cdot$.  If  $\phi^0$ is injective, then there exists a unique noncommutative (graded) right matrix factorization $\phi$ extending $\phi^0, \phi^1$. 
\item{} If $\mu:\phi\to \psi$ is a morphism of noncommutative (graded) right matrix factorizations of $f$ over $S$, then $\mu^{i+2}=\sf(\mu^i)$ ($\mu^{i+2}=\sf(\mu^i)(-d)$) for every $i\in \ZZ$.  It follows that $\mu$ is uniquely determined by $\mu^0$ and $\mu^1$. 
\item{} Let $\phi, \psi$ be noncommutative (graded) right matrix factorizations of $f$ over $S$ and $\mu^0:F^0\to G^0, \mu^1:F^1\to G^1$ are (graded) right $S$-module homomorphisms such that $\mu^0 \phi^0=\psi^0\mu^1 $, then there exists a unique morphism $\mu:\phi\to \psi$ extending $\mu ^0, \mu ^1$.  
\end{enumerate}
\end{theorem}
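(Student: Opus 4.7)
The proof rests on two identities that follow from the data of a regular normal element $f$ with normalizing automorphism $\nu$. First, because $af=f\nu(a)$ for every $a\in S$, for any matrix $\Phi$ with entries in $S$ one has $\Phi f = f\nu(\Phi)$, where $\nu$ acts entrywise. Second, specializing $a=f$ in the normalizing relation and cancelling by regularity gives $\nu(f)=f$. Together with the injectivity of left multiplication by $f$ on free modules, these are the only tools needed.

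For part (1), I compute the associative triple $\Phi^i\Phi^{i+1}\Phi^{i+2}$ two ways. Bracketing on the left gives $(fE_r)\Phi^{i+2}=f\Phi^{i+2}$; bracketing on the right gives $\Phi^i(fE_r)=\Phi^i f=f\nu(\Phi^i)$. Cancelling $f$ by regularity yields $\Phi^{i+2}=\nu(\Phi^i)$, i.e.\ $\phi^{i+2}=\nu(\phi^i)$; in the graded case the shift $(-d)$ appears because $F^{i+2}$ and $F^i$ differ by a degree-$d$ shift. This two-step recursion, applied to $\nu$ going forward and to $\nu^{-1}$ going backward, propagates $\phi^0,\phi^1$ to all of $\phi$.

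For part (2), I extend via $\phi^{2i}:=\nu^i(\phi^0)$ and $\phi^{2i+1}:=\nu^i(\phi^1)$ (with the appropriate grading shifts). That $\phi^{2i}\phi^{2i+1}=f\cdot$ is immediate from $\phi^0\phi^1=f\cdot$ together with $\nu(f)=f$. The remaining condition $\phi^{2i+1}\phi^{2i+2}=f\cdot$ reduces, after stripping $\nu^i$, to showing $\phi^1\nu(\phi^0)=f\cdot$; this in turn follows from the computation $\phi^0\bigl(\phi^1\nu(\phi^0)\bigr)=f\nu(\phi^0)=\phi^0 f$ by the assumed injectivity of $\phi^0$. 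Uniqueness is (1).

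Parts (3) and (4) follow the same template applied to morphisms. In (3), the two commutative squares give $\psi^i\psi^{i+1}\mu^{i+2}=\psi^i\mu^{i+1}\phi^{i+1}=\mu^i\phi^i\phi^{i+1}=\mu^i f=f\nu(\mu^i)$, which also equals $f\mu^{i+2}$; cancelling $f$ gives $\mu^{i+2}=\nu(\mu^i)$. In (4), I extend $\mu^{2i}:=\nu^i(\mu^0)$ and $\mu^{2i+1}:=\nu^i(\mu^1)$. The even-indexed squares are automatic by applying $\nu^i$ to the given square $\mu^0\phi^0=\psi^0\mu^1$, while the odd-indexed squares reduce to $\psi^1\nu(\mu^0)=\mu^1\phi^1$; applying $\psi^0$ on the left transforms both sides into $f\nu(\mu^0)=\mu^0\phi^0\phi^1$ via the matrix factorization relations, and the desired equality then follows by cancelling $\psi^0$. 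The needed injectivity of $\psi^0$ comes for free from $\psi^{-1}\psi^0=f\cdot$ being injective.

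The only real obstacle is bookkeeping: keeping the entrywise action of $\nu$ on matrices straight, aligning the grading shifts $(-d)$ and $(-id)$ in the graded case, and verifying that each cancellation of $f$, of $\phi^0$, or of $\psi^0$ is licensed by the appropriate injectivity at that point. No deeper ingredient enters.
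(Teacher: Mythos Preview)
Your proof is correct and follows essentially the same approach as the paper's: the triple-product computation for (1), the construction in (2) and (4) via $\nu$-iterates of the initial data (the paper phrases this inductively rather than with a closed formula, but the content is identical), and the cancellation arguments using injectivity of $f\cdot$, $\phi^0$, and $\psi^0$. Your computation in (3) is marginally more direct than the paper's, which routes through the identity $\nu^{-1}(\psi^{i+1})\psi^i=f\cdot$ from (1), but both arrive at $f\mu^{i+2}=f\nu(\mu^i)$ the same way.
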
 

\begin{proof} 
(1) Since $\phi^i\phi^{i+1}=\phi^{i+1}\phi^{i+2}=f\cdot $ for every $i\in \ZZ$, 
$$f\sf(\phi^i)=\phi^i f=\phi^i\phi^{i+1}\phi^{i+2}=f\phi^{i+2}.$$  
Since $f$ is regular (the map $f\cdot$ is injective), $\phi^{i+2}=\sf(\phi^i)$. 

(2) Let $\phi^0:G\to F, \phi ^1:F\to G$ such that $\phi^0\phi^1=f\cdot :F\to F$.
Since $\phi^0\phi^1\sf(\phi^0)=f\sf(\phi^0)=\phi^0 f$, and $\phi^0$ is injective, $\phi^1\sf(\phi^0)=f\cdot :G\to G$, so define $\phi^2=\sf(\phi^0)$.  Since 
$\sf^{-1}(\phi^1)\phi^0=\sf^{-1}(\phi^1\sf(\phi^0))=\sf^{-1}(f)\cdot =f\cdot : G\to G$,  define $\phi^{-1}=\sf^{-1}(\phi^1)$.
Inductively we can construct a noncommutative (graded) right matrix factorization $\phi$ extending $\phi^0, \phi^1$.
The uniqueness follows from (1).  

(3) By (1), $\sf^{-1}(\psi^{i+1})\psi^i= \psi^{i-1}\psi^i =f\cdot$, so 
\begin{align*}
f\psi^{i+1}\mu^{i+2} & = f\mu ^{i+1}\phi^{i+1} = \sf^{-1}(\psi^{i+1})\psi^i\mu ^{i+1}\phi^{i+1}\\
&= \sf^{-1}(\psi^{i+1})\mu ^i\phi^i\phi^{i+1} = \sf^{-1}(\psi^{i+1})\mu ^if = f\psi^{i+1}\sf(\mu ^i).
\end{align*}
Since $f\psi^{i+1}$ is injective, $\mu^{i+2}=\sf(\mu^i)$.  

(4) Since $\psi ^0\psi^1\sf(\mu ^0)=f\sf(\mu ^0)=\mu ^0f=\mu ^0\phi^0\phi^1=\psi^0\mu ^1\phi^1$ and $\psi^0$ is injective, $\psi^1\sf(\mu^0)=\mu^1\phi^1$, so define $\mu^2=\sf(\mu^0)$.
Since $\sf^{-1}(\mu^1)\phi^{-1}=\sf^{-1}(\mu^1)\sf^{-1}(\phi^1)=\sf^{-1}(\mu^1\phi^1)=\sf^{-1}(\psi^1\sf(\mu^0))=\sf^{-1}(\psi^1)\mu^0=\psi^{-1}\mu^0$, define $\mu^{-1}=\sf^{-1}(\mu^1)$.
Inductively we can construct a morphism $\mu:\phi\to \psi$ extending $\mu^0, \mu^1$.
The uniqueness follows from (3).  
\end{proof} 

\begin{remark} \label{rem.cmf} 
If $S$ is a commutative (graded) ring and $f$ is a (homogeneous) regular element (of degree $d$), then $f$ is central (i.e., $\nu=\id$), so a noncommutative (graded) matrix factorization $\phi$ of $f$ over $S$ determines and is determined by an ordered pair of (graded) $S$-module homomorphisms $(\phi^0: F \to G, \phi^1:G\to F)$ where $F, G$ are (graded) free $S$-modules of rank $r$ for some $r \in \NN$ such that $\phi^1\phi^0 =f \cdot $ and $\phi^0\phi^1 = f \cdot $ by Theorem \ref{thm.nmf} (1), (2).
Therefore, we see that $\NMF_S(f)$ ($\NMF^{\ZZ}_S(f)$) is equivalent to the category $\MF_S(f)$ ($\MF^{\ZZ}_S(f)$) of usual (graded) matrix factorizations of $f$ over $S$. (Note that morphisms of these categories also agree by Theorem \ref{thm.nmf} (3), (4).)
\end{remark}

In the case that $f\in S$ is a homogeneous regular normal element, the notion of twisted graded matrix factorization was defined in \cite {CCKM}.  We will show that a twisted graded matrix factorization and a noncommutative graded matrix factorization are the same if $f\in S$ is a homogeneous regular normal element.  

Let $S$ be a connected graded algebra.
For $\s \in \GrAut S$ and $M \in \GrMod S$, we define $M_\s \in \GrMod S$ where $M_\s = M$ as a graded $k$-vector space with the new action $m*a = m\s(a)$ for $m \in M, a \in S$.
If $\phi: M \to N$ is a graded right $S$-module homomorphism, then $\phi_{\s} :M_\s \to N_\s; \; m \mapsto \phi(m)$ is also a graded right $S$-module homomorphism.
Let $f \in S_d$ be a regular normal element, and $\nu$ the normalizing automorphism of $f$.
Then we write $M_{\tw} = M_{\nu^{-1}}(-d)$. Note that a graded right $S$-module homomorphism $\phi: M \to N$ naturally induces a graded right $S$-module homomorphism $\phi_{\tw}: M_{\tw} \to N_{\tw}$.
It is easy to see that the map $\nu:S(-m)_{\tw}=S_{\nu^{-1}}(-m-d) \to S(-m-d); \; a \mapsto \nu(a)$ is a graded right $S$-module homomorphism for every $m\in \ZZ$, so,  
for a graded free right $S$-module $F=\bigoplus _{s=1}^rS(-m_{s})$, $\nu$ induces a graded right $S$-module homomorphism $F_{\tw} \to F(-d)$, which is denoted by $\nu$ by abuse of notation.


\begin{definition}[\textnormal{\cite[Definitions 2.2, 2.5]{CCKM}}]
Let $S$ be a locally finite connected graded algebra, $f \in S_d$ a regular normal element, and $\nu$ the normalizing automorphism of $f$.
A \emph{twisted graded right matrix factorization} of $f$ over $S$ is an ordered pair of graded right $S$-module homomorphisms
\[ (\psi :F \to G, \; \t: G_{\tw} \to F) \]
where $F, G$ are graded free right $S$-modules of rank $r$ for some $r \in \NN$ such that 
$\psi \t= \cdot f$ and $\t \psi_{\tw}= \cdot f$. 
A \emph{morphism} $(\psi, \t) \to (\psi', \t')$ of twisted graded right matrix factorizations is a pair $(\mu_G, \mu_F)$ of graded right $S$-module homomorphisms such that the diagram 
\[\xymatrix@R=2pc@C=3pc{
F \ar[d]_{\mu_F} \ar[r]^{\psi} &G \ar[d]^{\mu_G} \\
F' \ar[r]^{\psi'} &G'
}\]
commutes. We denote by $\TMF_S^{\ZZ}(f)$ the category of twisted graded right matrix factorizations.
\end{definition}

\begin{remark}  \label{rem.tmf}
The comment in \cite [Section 1]{CCKM} is misleading.  The tensor algebra $T:=T(V)$ of an $n$-dimensional vector space $V$ over $k$ is a locally finite connected graded algebra.  Since the minimal free resolution of $k$ over $T$ is given by $0\to T(-1)^n\to T\to k$, so, in particular,  $T(-1)^n\to T$ is injective, however, if $n>1$, then $\rank  T(-1)^n=n>\rank T=1$.
For this reason, it is unclear that $(\psi, \t)\in \TMF_S^{\ZZ}(f)$ implies $\rank F=\rank G$ in general as claimed in \cite [Section 2]{CCKM}, so we impose the condition $\rank F=\rank G$ in the definition of a twisted graded matrix factorization in this paper.  (This problem will be avoided by assuming that $S$ is a graded quotient algebra of a right noetherian connected graded regular algebra in Section 6.)   
\end{remark} 

\begin{proposition} \label{prop.ctmf}
If $S$ is a locally finite connected graded algebra, and $f \in S_d$ is a regular normal element, 
then $\NMF_S^{\ZZ}(f) \cong \TMF_S^{\ZZ}(f)$.
\end{proposition}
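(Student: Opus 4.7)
The plan is to construct quasi-inverse functors between the two categories, extracting the twisted-matrix-factorization data from a noncommutative matrix factorization via the normalizing automorphism $\nu := \nu_f$, and conversely extending a twisted matrix factorization to a $2$-periodic (up to $\nu$ and shift) bi-infinite sequence.

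I would first record a basic identification. For any graded free right module $F=\bigoplus_s S(-m_s)$, the componentwise map $\alpha_F: F_\tw \to F(-d)$ induced by $\nu:S(-m_s)_{\nu^{-1}}(-d) \to S(-m_s-d)$, $x \mapsto \nu(x)$, is a graded right $S$-module isomorphism (the compatibility with the twisted action is immediate from $\nu(x\nu^{-1}(a))=\nu(x)a$), and the relation $xf = f\nu(x)$ yields a factorization $\cdot f = (f\cdot) \circ \alpha_F : F_\tw \to F$. More generally, for any graded right $S$-linear map $\psi: F \to G$ between graded free modules given by left multiplication by a matrix $\Psi$, one has $\alpha_G \circ \psi_\tw = \nu(\psi)(-d) \circ \alpha_F$, where $\nu(\psi)(-d)$ denotes left multiplication by $\nu(\Psi)$. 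These two compatibilities are the only substantive calculations needed.

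Next I define $U:\NMF_S^{\ZZ}(f) \to \TMF_S^{\ZZ}(f)$. By Remark 2.2(2) and Theorem 4.5(1), for $\phi = \{\phi^i:F^{i+1} \to F^i\} \in \NMF_S^{\ZZ}(f)$ we may take $F^{i+2} = F^i(-d)$ on the nose with $\phi^{i+2} = \nu(\phi^i)(-d)$. Set $G:=F^0$, $F:=F^1$, $\psi := \phi^0$, and $\tau := \phi^1 \circ \alpha_G: G_\tw \to F$. Then $\psi\tau = \cdot f$ follows from $\phi^0\phi^1 = f\cdot$ and the factorization of $\cdot f$ through $\alpha_G$, while $\tau\psi_\tw = \cdot f$ follows from $\phi^1\phi^2 = f\cdot$ combined with the compatibility above applied to $\psi$ (identifying $\phi^2 = \nu(\psi)(-d)$). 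On a morphism $\mu = \{\mu^i\}$, set $U(\mu) := (\mu^1,\mu^0)$; the required TMF commutativity is precisely the $i=0$ square of $\mu$.

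For the quasi-inverse $V$: given $(\psi:F\to G,\tau:G_\tw \to F) \in \TMF_S^{\ZZ}(f)$, set $F^0 := G$, $F^1 := F$, $\phi^0 := \psi$, and $\phi^1 := \tau \circ \alpha_G^{-1}: F^0(-d) \to F^1$. Then $\phi^0\phi^1 = f\cdot$ as above, and in order to invoke Theorem 4.5(2) I must verify that $\psi$ is injective: if $\psi(x) = 0$ then $\tau\psi_\tw(x) = 0$, but $\tau\psi_\tw = \cdot f$ is injective because, under $\alpha_F$, it corresponds to $f\cdot$ on the free module $F(-d)$ and $f$ is regular; hence $x=0$. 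On morphisms, Theorem 4.5(4) supplies the unique extension. Finally, $VU \cong \id$ and $UV \cong \id$ follow directly from the uniqueness statements in Theorem 4.5(1),(3) after unwinding definitions.

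The main obstacle is purely bookkeeping: one must keep straight how right multiplication $\cdot f$ on a twist $(-)_\tw$ corresponds to left multiplication $f\cdot$ on a shift $(-d)$ via $\alpha$, and how the two-periodicity $\phi^{i+2} = \nu(\phi^i)(-d)$ of a noncommutative matrix factorization matches, under these identifications, the application of the twist functor $(-)_\tw$ appearing in the twisted framework. Once the compatibilities $\alpha_G \circ \psi_\tw = \nu(\psi)(-d) \circ \alpha_F$ and $\cdot f = (f\cdot)\circ \alpha_F$ are in hand, every remaining verification reduces to a short diagram chase.
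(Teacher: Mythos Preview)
Your proposal is correct and follows essentially the same approach as the paper: both constructions use the isomorphism $F_{\tw}\cong F(-d)$ induced by $\nu$ (what you call $\alpha_F$), set $\psi=\phi^0$, $\tau=\phi^1\circ\alpha_G$ in one direction and $\phi^0=\psi$, $\phi^1=\tau\circ\alpha_G^{-1}$ in the other, and appeal to the existence and uniqueness statements in Theorem~\ref{thm.nmf} to handle extensions and morphisms. Your explicit verification that $\psi$ is injective (needed to invoke Theorem~\ref{thm.nmf}(2)) is a detail the paper leaves implicit, so your write-up is in fact slightly more complete on that point.
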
 

\begin{proof}
Let $(\psi :F \to G, \; \t: G_{\tw} \to F)\in \TMF_S^{\ZZ}(f)$.
By similar arguments to Remark \ref{rem.la} (1), we may assume that
$F = \bigoplus _{s=1}^rS(-m_{s})$ and $G=\bigoplus _{s=1}^rS(-l_{s}) $ without loss of generality.
Then we have the following commutative diagram
\[\xymatrix@R=2.25pc@C=5pc{
G_{\tw} \ar[d]_{\nu}^{\cong} \ar[r]^{\t}  &F \ar[r]^{\psi} \ar@{=}[d] & G \ar@{=}[d]\\
G(-d)\ar[r]^{\t \nu^{-1}} &F \ar[r]^{\psi} &G.
}\]
Put $\phi^0:= \psi$ and $\phi^1:= \t \nu^{-1}$. For $x \in G(-d)$, it follows that $\phi^0\phi^1(x) = \psi\t \nu^{-1}(x) = \nu^{-1}(x)f = fx$,
so $\phi^0\phi^1 = f\cdot$. By Theorem \ref{thm.nmf} (2), we obtain a unique $\phi_{(\psi,\t)} := \{ \phi^i\}_{i \in \ZZ}\in \NMF_S^{\ZZ}(f)$ extending $\phi^0, \phi^1$.
By Theorem \ref{thm.nmf} (4), we see that a morphism $(\psi, \t) \to (\psi', \t')$ in $\TMF_S^{\ZZ}(f)$ induces a unique morphism  $\phi_{(\psi,\t)} \to \phi_{(\psi',\t')}$ in $\NMF_S^{\ZZ}(f)$. Hence this construction defines a functor ${\mathfrak F}:\TMF_S^{\ZZ}(f) \to \NMF_S^{\ZZ}(f)$.

Conversely, if $\{\phi^i:F^{i+1}\to F^i\}_{i\in \ZZ}\in \NMF_S^{\ZZ}(f)$ such that 
$F^1 = \bigoplus _{s=1}^rS(-m_{s})$ and $F^0=\bigoplus _{s=1}^rS(-l_{s})$, then we have the following commutative diagram
\[\xymatrix@R=2.25pc@C=5pc{
F(-d) \ar[d]_{\nu^{-1}}^{\cong} \ar[r]^{\phi^2} &G(-d) \ar[d]_{\nu^{-1}}^{\cong} \ar[r]^{\phi^1}  &F \ar[r]^{\phi^0} \ar@{=}[d] & G \ar@{=}[d]\\
F_{\tw} \ar[r]^{\phi^0_{\tw}} &G_{\tw}\ar[r]^{\phi^1 \nu} &F \ar[r]^{\phi^0} &G
}\]
where $G := F^0, F:=F^1$. In fact, for $x \in F(-d)$, we have $\nu^{-1}\phi^2(x) = \nu^{-1}(\nu(\phi^0)(x)) = \nu^{-1}(\nu(\Phi^0)x) = \Phi^0 \nu^{-1}(x) = 
\phi^0_{\tw}\nu^{-1}(x)$ by Theorem \ref{thm.nmf} (1).
Put $\psi:= \phi^0$ and $\t:= \phi^1 \nu$.
For $y \in G_{\tw}, x \in F_{\tw}$, it follows that $\psi\t(y) = \phi^0\phi^1\nu (y) = f\nu(y) =yf$ and $\t\psi_{\tw}(x)= \phi^1 \nu \phi^0_{\tw}(x)= \phi^1\phi^2 \nu(x)= f\nu(x) =xf$,
so $\psi \t= \cdot f$ and $\t\psi_{\tw}= \cdot f$. Thus we obtain $(\psi,\t)\in \TMF_S^{\ZZ}(f)$.
This construction defines a functor ${\mathfrak G}:\NMF_S^{\ZZ}(f) \to \TMF_S^{\ZZ}(f)$, which is an inverse to $\mathfrak F$.
\end{proof}

\begin{remark}
The category $\NMF_S^{\ZZ}(f)$ is hardly abelian.
Presumably, the correct statement of \cite [Proposition 3.1]{CCKM} is ``$\TMF_S^{\ZZ}(f)$ is an additive category" instead of an abelian category. 
\end{remark} 

Let $S$ be a locally finite connected graded algebra, $f \in S_d$ a regular normal element, and $A=S/(f)$.
By \cite[Proposition 2.4]{CCKM}, for $(\psi :F \to G, \; \t: G_{\tw} \to F)\in \TMF_S^{\ZZ}(f)$, we have the complex $C'(\psi, \t)$ of graded right $A$-modules defined by
\[\xymatrix@R=2pc@C=2pc{
\cdots \ar[r]^(0.3){\overline{\t_{\tw^{i+1}}}} &\overline{F_{\tw^{i+1}}} \ar[r]^(0.5){\overline{\psi_{\tw^{i+1}}}} &\overline{G_{\tw^{i+1}}}  \ar[r]^(0.6){\overline{\t_{\tw^i}}} &\overline{F_{\tw^i}} \ar[r]^{\overline{\psi_{\tw^i}}} &\overline{G_{\tw^i}} \ar[r] &\cdots.
}\]

\begin{proposition} \label{prop.tmf}
Let $S$ be a locally finite connected graded algebra, and $f \in S_d$ a regular normal element.
For every $\phi=\{\phi^i:F^{i+1}\to F^i\}_{i\in \ZZ}\in \NMF_S^{\ZZ}(f)$,
there exists $(\phi^0, \phi^1\nu)\in \TMF_S^{\ZZ}(f)$ such that $C(\phi) \cong C'(\phi^0, \phi^1\nu)$.
\end{proposition}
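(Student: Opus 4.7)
First, I would take as the desired twisted matrix factorization exactly the pair $(\phi^0,\phi^1\nu)$ produced by the functor $\mathfrak{G}:\NMF_S^{\ZZ}(f)\to \TMF_S^{\ZZ}(f)$ constructed in the proof of Proposition~\ref{prop.ctmf}; the existence of such a pair in $\TMF_S^{\ZZ}(f)$ is already established there. The real content of the statement is to exhibit an isomorphism of complexes of graded right $A$-modules $C(\phi)\cong C'(\phi^0,\phi^1\nu)$.

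Write $G:=F^0$ and $F:=F^1$. Iterating Theorem~\ref{thm.nmf}~(1) gives $F^{2i}\cong G(-id)$ and $F^{2i+1}\cong F(-id)$, together with the explicit formulae
\[ \phi^{2i}=\nu^i(\phi^0)(-id),\qquad \phi^{2i+1}=\nu^i(\phi^1)(-id). \]
Comparing with the pattern of $C'(\phi^0,\phi^1\nu)$, the indices line up as $F^{2i}\leftrightarrow G_{\tw^i}$ and $F^{2i+1}\leftrightarrow F_{\tw^i}$. Since $F$ and $G$ are graded free and $G_{\tw^i}=G_{\nu^{-i}}(-id)$ coincides with $G(-id)$ as a graded $k$-vector space, the coordinate-wise action of $\nu^{-i}$ defines graded right $S$-module isomorphisms $\nu^{-i}:G(-id)\xrightarrow{\cong} G_{\tw^i}$ and $\nu^{-i}:F(-id)\xrightarrow{\cong} F_{\tw^i}$, extending to all $i\in\ZZ$ the vertical maps that appear in the commutative diagram from the proof of Proposition~\ref{prop.ctmf}. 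Because $f\cdot f=f\cdot\nu(f)$ and $f$ is regular, one has $\nu(f)=f$, so each $\nu^{-i}$ preserves the two-sided ideal $(f)$ and descends to a graded right $A$-module isomorphism $\overline{\nu^{-i}}$.

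The core verification is that the maps $\overline{\nu^{-i}}$ intertwine the differentials of the two complexes. For $x\in F(-id)$,
\[ (\phi^0)_{\tw^i}(\nu^{-i}(x))=\Phi^0\cdot\nu^{-i}(x)=\nu^{-i}(\nu^i(\Phi^0)\cdot x)=\nu^{-i}(\phi^{2i}(x)), \]
using that $\nu^{-i}$ is an algebra map so it commutes with left multiplication up to applying $\nu^i$ to the matrix. Since $\t=\phi^1\nu$ agrees, under the identification $\nu^{-1}:G(-d)\xrightarrow{\cong} G_{\tw}$, with $\phi^1$, a parallel computation yields
\[ (\phi^1\nu)_{\tw^i}(\nu^{-(i+1)}(y))=\Phi^1\cdot\nu^{-i}(y)=\nu^{-i}(\nu^i(\Phi^1)\cdot y)=\nu^{-i}(\phi^{2i+1}(y)) \]
for $y\in G(-(i+1)d)$. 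Reducing both identities modulo $f$ produces the required commuting squares at every level, hence the isomorphism of complexes.

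The main obstacle is purely notational: one must carefully juggle the three operations being iterated---the shift $(-id)$, the twist $(-)_{\tw^i}$, and the algebra automorphism $\nu^i$---and confirm that passage to $A=S/(f)$ is well defined throughout. Both issues are settled by the identities $\nu(f)=f$ and $\phi^{i+2}=\nu(\phi^i)(-d)$ from Theorem~\ref{thm.nmf}~(1), after which the same square check performed for $i=1$ in the proof of Proposition~\ref{prop.ctmf} applies uniformly for every $i\in\ZZ$.
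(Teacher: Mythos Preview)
Your proof is correct and follows essentially the same approach as the paper: you invoke Proposition~\ref{prop.ctmf} for the existence of $(\phi^0,\phi^1\nu)\in\TMF_S^{\ZZ}(f)$, and then use the vertical isomorphisms $\nu^{-i}$ (on $F^{2i}$ and $F^{2i+1}$) together with Theorem~\ref{thm.nmf}~(1) to verify commutativity of the ladder diagram linking $C(\phi)$ and $C'(\phi^0,\phi^1\nu)$. The paper merely asserts that this diagram commutes, whereas you carry out the matrix computation explicitly; this is a welcome elaboration but not a different method.
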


\begin{proof} 
By the proof of Proposition \ref{prop.ctmf}, $(\phi^0, \phi^1\nu)\in \TMF_S^{\ZZ}(f)$. Moreover, we can check that
\[\xymatrix@R=2.25pc@C=5pc{
F^{2i+2} \ar[d]_{\nu^{-i-1}}^{\cong} \ar[r]^{\phi^{2i+1}} &F^{2i+1} \ar[d]_{\nu^{-i}}^{\cong} \ar[r]^{\phi^{2i}}  &F^{2i} \ar[d]_{\nu^{-i}}^{\cong}\\
G_{\tw^{i+1}} \ar[r]^{(\phi^1 \nu)_{\tw^{i}}} &F_{\tw^{i}}\ar[r]^{(\phi^0)_{\tw^{i}}} &G_{\tw^{i}}
}\]
commutes for every $i \in \ZZ$, 
so $C(\phi) \cong C'(\phi^0, \phi^1\nu)$.
\end{proof}

\begin{definition} Let $S$ be a (graded) ring and $f\in S$ a (homogeneous) element.
The \emph{period} of a noncommutative (graded) right matrix factorization $\phi=\{\phi^i\}_{i\in \ZZ}$ of $f$ over $S$, denoted by $|\phi|$, is defined to be 
the smallest positive integer $\ell\in \NN^+$ such that $\Coker (\overline {\phi^{\ell}})\cong \Coker (\overline {\phi^0})$
($\Coker (\overline {\phi^{\ell}})\cong \Coker (\overline {\phi^0})(-m)$ for some $m\in \ZZ$).
\end{definition}   

The lemma below follows from Theorem \ref{thm.nmf} (1), which also follows from the combination of Proposition \ref{prop.tmf} and \cite [Proposition 2.12]{CCKM}. (See Theorem \ref{thm.mot} (1).)

\begin{lemma} 
Let $S$ be a (graded) ring and $f\in S$ a (homogeneous) regular normal element. 
If $\phi$ is a noncommutative (graded) right matrix factorization of $f$ over $S$, then $|\phi|\leq 2|\sf|$.  In particular, if $f$ is central, then $|\phi|$ is 1 or 2.  
\end{lemma}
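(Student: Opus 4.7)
The plan is to iterate Theorem~\ref{thm.nmf}~(1), which gives $\phi^{i+2}=\sf(\phi^i)$ in the ungraded case (and $\phi^{i+2}=\sf(\phi^i)(-d)$ in the graded case). Applying this $k$ times yields $\phi^{i+2k}=\sf^k(\phi^i)$ (respectively $\phi^{i+2k}=\sf^k(\phi^i)(-kd)$) for every $k\geq 0$. Specializing to $k=|\sf|$ so that $\sf^k=\id$, we obtain $\phi^{2|\sf|}=\phi^0$ (respectively $\phi^{2|\sf|}=\phi^0(-|\sf|d)$). Reducing modulo $(f)$ and passing to cokernels gives $\Coker(\overline{\phi^{2|\sf|}})\cong\Coker(\overline{\phi^0})$ in the ungraded case, and $\Coker(\overline{\phi^{2|\sf|}})\cong\Coker(\overline{\phi^0})(-|\sf|d)$ in the graded case. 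Both conclusions match exactly the defining condition of the period, so $|\phi|\leq 2|\sf|$.

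For the ``in particular'' clause, suppose $f$ is central. Then $af=fa$ for every $a\in S$, while normality gives $af=f\sf(a)$. Combining these equalities yields $f\sf(a)=fa$, and regularity of $f$ (injectivity of the left multiplication by $f$) forces $\sf(a)=a$ for every $a\in S$. Hence $\sf=\id$, so $|\sf|=1$, and the first part specializes to $|\phi|\leq 2$, i.e., $|\phi|\in\{1,2\}$.

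The main obstacle is essentially negligible once Theorem~\ref{thm.nmf}~(1) is in hand; the argument is a straightforward iteration together with the definition of $|\phi|$. The only bookkeeping point is the accumulation of the degree shift $(-d)$ at each step of the iteration in the graded case, which cleanly sums to $(-kd)$ after $k$ steps and therefore matches the shift allowed in the graded definition of period. (When $\sf$ has infinite order, the bound $2|\sf|$ is vacuous, consistent with the fact that the period need not be finite in that generality.)
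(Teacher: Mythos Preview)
Your proof is correct and follows exactly the route the paper indicates: the paper simply states that the lemma ``follows from Theorem~\ref{thm.nmf}~(1)'' without further detail, and your argument spells out precisely this iteration (with the bookkeeping of shifts in the graded case) together with the observation that centrality forces $\sf=\id$.
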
 

\begin{example} Let $S=k\<x, y\>/(x^2, y^2)$ with $\deg x=\deg y=1$, $f=\a xy+yx\in S_2$, and $A=S/(f)$.  
Since $H_S(t)=(1+t)/(1-t)$ and $H_{S/(f)}(t)=(1+t)^2$, 
$$\begin{CD} 0 @>>> S(-2) @>f\cdot >> S @>>> S/(f) @>>> 0 \end{CD}$$
is an exact sequence in $\grmod S$ and 
$$\begin{CD} 0 @>>> S(-2) @>\cdot f>> S @>>> S/(f) @>>> 0 \end{CD}$$
is an exact sequence in $\grmod S^o$, so $f\in S_2$ is a regular element. 
Since
\begin{align*}
& xf=x(\a xy+yx)=xyx=(\a xy+yx)\a ^{-1}x=f\a ^{-1}x, \\
& yf=y(\a xy+yx)=\a yxy=(\a xy+yx)\a y=f\a y, 
\end{align*}
$f\in S_2$ is a normal element with the normalizing automorphism $\sf$ given by $\sf(x)=\a^{-1}x, \sf(y)=\a y$.   

For 
$0\neq a, b\in k$, 
$\psi=\{\psi^i\}_{i\in \ZZ}\in \NMF^{\ZZ}_S(f)$ defined by $\Psi^{2i}=\a^{-i}ax+\a ^iby, \Psi^{2i+1}=\a^{-i}b^{-1}x+\a ^{i+1}a^{-1}y$
has the property $\Psi^{i+1}\Psi^i=f$ 
by Example \ref{ex.2.3}, and it is easy to see that $\psi ^{i+2}=\sf(\psi ^i)(-2)$ for every $i\in \ZZ$.  To compute $|\psi|$, we use $\phi=\{\phi^i\}_{i\in \ZZ}\in \NMF^{\ZZ}_S(f)$ defined by $\Phi^{i}=ax+\a ^iby$, which is isomorphic to $\psi$ by Example \ref{ex.2.3}.  Then it is easy to see that $|\psi|=|\phi|=|\a|=|\sf|$.
\end{example} 

\section{Totally Reflexive Modules}

\begin{definition} Let $A$ be a (graded) ring. 
A (graded) right $A$-module $M$ is called \emph{totally reflexive} if
\begin{enumerate}
\item{} $\Ext^i_A(M, A)=0$ for all $i\geq 1$, 
\item{} $\Ext^i_{A^o}(\Hom_A(M, A), A)=0$ for all $i\geq 1$, and 
\item{} the natural biduality map $M\to \Hom_{A^o}(\Hom_A(M, A), A)$ is an isomorphism 
\end{enumerate}
(that is, $\RHom_A(M, A)\cong \Hom_A(M, A)$ and $\RHom_{A^o}(\Hom_A(M, A), A)\cong M$).
The full subcategory of $\mod A$ consisting of totally reflexive modules is denoted by $\TR(A)$.
(The full subcategory of $\grmod A$ consisting of graded totally reflexive modules is denoted by $\TR^{\ZZ}(A)$.) 
\end{definition} 

\begin{remark}
A totally reflexive module is called a module of G-dimension zero or a Gorenstein-projective module in some literature.
\end{remark}

We recall some basic properties of totally reflexive modules.

\begin{lemma} \label{lem.tr} 
Let $A$ be a ring.  
\begin{enumerate}
\item{} If $P\in \mod A$ is 
projective, then $P\in \TR(A)$.  
\item{} If $M$ is a totally reflexive right $A$-module, then $M^*:=\Hom_A(M, A)$ is a totally reflexive left $A$-module.
In particular, if $A$ is left noetherian, then $M\in \TR(A)$ implies $M^*\in \TR(A^o)$. 
\item{}  Let $0 \to N \to P \to M \to 0$ be a short exact sequence of right $A$ modules
where $P$ is projective.
If $M\in \TR(A)$, then $N$ is a totally reflexive right $A$-module.
In particular, if $A$ is right noetherian, then $M\in \TR(A)$ implies $N \in \TR(A)$  (cf. \cite[Lemma 2.3]{AM}). 
\end{enumerate}
\end{lemma}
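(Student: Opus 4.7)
The plan is to verify the three defining conditions of total reflexivity in each part, the key tool being the long exact sequence of $\Ext$ and the naturality of the biduality map.

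For (1), writing $P$ as a direct summand of some $A^r$ makes $P^{*}=\Hom_A(P,A)$ a direct summand of $(A^r)^{*}\cong A^r$ as a left module. Hence both $P$ and $P^{*}$ are projective, giving $\Ext^i_A(P,A)=\Ext^i_{A^o}(P^{*},A)=0$ for $i\geq 1$. The biduality map is the canonical identification on $A$, compatible with finite direct sums and summands, so it is an isomorphism for $P$.

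For (2), let $M\in\TR(A)$. Condition (1) of total reflexivity for $M^{*}$ as a left module reads $\Ext^i_{A^o}(M^{*},A)=0$, which is condition (2) for $M$. Using the biduality isomorphism $M^{**}\cong M$, condition (2) for $M^{*}$ becomes $\Ext^i_A(M,A)=0$, which is condition (1) for $M$. Condition (3) for $M^{*}$ follows from the same biduality isomorphism applied one step further.

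For (3), choose a short exact sequence $0\to\Omega M\to P\to M\to 0$ with $P$ finitely generated projective. Applying $\Hom_A(-,A)$ and using $\Ext^i_A(M,A)=0$ for $i\geq 1$ yields the short exact sequence
\[
0\to M^{*}\to P^{*}\to (\Omega M)^{*}\to 0
\]
together with $\Ext^i_A(\Omega M,A)\cong\Ext^{i+1}_A(M,A)=0$ for $i\geq 1$. Now apply $\Hom_{A^o}(-,A)$ to the displayed sequence: the vanishing of $\Ext^i_{A^o}(M^{*},A)$ (condition (2) for $M$) and of $\Ext^i_{A^o}(P^{*},A)$ (by part (1)) dimension-shifts to $\Ext^i_{A^o}((\Omega M)^{*},A)=0$ for $i\geq 2$, while vanishing at $i=1$ reduces to showing that the induced map $P^{**}\to M^{**}$ is surjective, which holds because it coincides with the original surjection $P\to M$ under the biduality isomorphisms for $P$ and $M$. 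For condition (3), the naturality of biduality produces a commutative diagram
\[
\begin{CD}
0 @>>> \Omega M @>>> P @>>> M @>>> 0 \\
@. @VVV @VVV @VVV \\
0 @>>> (\Omega M)^{**} @>>> P^{**} @>>> M^{**} @>>> 0
\end{CD}
\]
with exact rows, whose middle and right vertical maps are isomorphisms, so the five lemma yields an isomorphism $\Omega M\cong (\Omega M)^{**}$.

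The main technical obstacle is the $i=1$ vanishing in (3), where one must identify the induced map $P^{**}\to M^{**}$ with the original surjection $P\to M$ via naturality of the biduality map in order to conclude surjectivity. The ``in particular'' statements are routine once one notes that finite generation is preserved under duals over left noetherian rings and under syzygies over right noetherian rings.
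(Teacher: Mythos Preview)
Your proof is correct. The paper does not actually supply a proof of this lemma: it is stated under the heading ``We recall some basic properties of totally reflexive modules'' and left without argument, with only a citation to \cite[Lemma 2.2]{AM} attached to part (3). Your direct verification via the long exact $\Ext$ sequence and naturality of the biduality map is the standard route and fills in what the paper leaves implicit; in particular, your handling of the $i=1$ step in part (3)---identifying $P^{**}\to M^{**}$ with the original surjection $P\to M$---is exactly the point that needs care and you treat it correctly.
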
 

\begin{lemma} \label{lem.ev} 
Let $A$ be a right noetherian ring, and $M\in \mod A$.  Suppose that either $\pdim(M)<\infty$ or $\injdim_{A^o}(A)<\infty$.  Then $M\in \TR(A)$ if and only if $\Ext^i_A(M, A)=0$ for all $i\geq 1$.
\end{lemma}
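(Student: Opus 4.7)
The ($\Rightarrow$) direction is immediate from condition (1) of the definition, so I focus on ($\Leftarrow$) and assume $\Ext^i_A(M,A)=0$ for all $i\geq 1$. The two hypotheses call for quite different arguments, so I would handle them separately.

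If $\pd M<\infty$, a standard dimension-shifting induction (using right noetherianness to keep syzygies finitely generated) forces $M$ to be projective: with $0\to K\to F\to M\to 0$ a finitely generated free presentation, dimension shifting gives $\Ext^i_A(K,A)\cong\Ext^{i+1}_A(M,A)=0$ and $\pd K<\pd M$, so by induction $K$ is projective, hence a summand of some $A^m$; then $\Ext^1_A(M,A^m)=0$ splits the presentation and $M$ is projective. Lemma~\ref{lem.tr}(1) gives $M\in\TR(A)$.

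Assume now $\id_{A^o}A=n<\infty$, and fix a finitely generated projective resolution $P_\bullet\to M$. Dimension shifting from the hypothesis yields $\Ext^i_A(\Omega^j M,A)=0$ for all $i\geq 1,\ j\geq 0$, so applying $\Hom_A(-,A)$ to each short exact sequence $0\to\Omega^{j+1}M\to P_j\to\Omega^j M\to 0$ produces short exact sequences $0\to(\Omega^j M)^*\to P_j^*\to(\Omega^{j+1}M)^*\to 0$. Splicing these yields an exact \emph{coresolution}
\[ 0\to M^*\to P_0^*\to P_1^*\to P_2^*\to\cdots \]
of $M^*$ by projective left $A$-modules. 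Writing $K_j:=(\Omega^j M)^*$, the short exact sequences $0\to K_j\to P_j^*\to K_{j+1}\to 0$ with projective middle term give dimension-shifting isomorphisms $\Ext^{i+1}_{A^o}(K_{j+1},A)\cong\Ext^i_{A^o}(K_j,A)$ for $i\geq 1$; iterating and using $\Ext^{>n}_{A^o}(-,A)=0$ (from $\id_{A^o}A=n$) kills every $\Ext^i_{A^o}(K_j,A)$ with $i\geq 1$. In particular $\Ext^i_{A^o}(M^*,A)=0$ for $i\geq 1$, which is condition (2).

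For condition (3), the same vanishing implies that applying $\Hom_{A^o}(-,A)$ termwise to the coresolution keeps it exact, giving $\cdots\to P_1^{**}\to P_0^{**}\to M^{**}\to 0$. Each biduality map $\delta_{P_i}\colon P_i\to P_i^{**}$ is an isomorphism (finitely generated projective modules are reflexive), and naturality of $\delta$ produces a commutative ladder between the original projective resolution of $M$ and this new exact sequence, with isomorphisms on every $P_i$; the five lemma then forces $\delta_M\colon M\to M^{**}$ to be an isomorphism. Hence $M\in\TR(A)$. The main obstacle is coordinating the two dimension-shifting arguments running in opposite directions—through the syzygies $\Omega^j M$ of $M$ over $A$, driven by $\Ext^i_A(M,A)=0$, and through the cosyzygies $K_j$ of $M^*$ over $A^o$, driven by $\id_{A^o}A<\infty$—and verifying that biduality intertwines them via the reflexivity of each $P_i$, so that the two exact sequences line up for the five lemma.
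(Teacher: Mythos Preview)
Your proof is correct. The paper actually states this lemma without proof, presumably regarding it as standard; your argument supplies exactly the expected details. In Case~1 the induction on $\pd M$ is the same idea behind the paper's later Lemma~\ref{lem.jAS} (which shows $\pd M=e(M)$), and in Case~2 your two dimension-shifting arguments together with reflexivity of finitely generated projectives and the five lemma constitute the standard verification. One small wording point: when you say ``$\Ext^1_A(M,A^m)=0$ splits the presentation'', the splitting is of the sequence with kernel $K$, so the relevant vanishing is $\Ext^1_A(M,K)$; this follows since $K$ is a summand of $A^m$, but it would be cleaner to say so explicitly.
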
 


\begin{remark}
If $A$ is a noetherian AS-Gorenstein algebra, then a finitely generated totally reflexive graded module is the same as a finitely generated maximal Cohen-Macaulay graded module, that is, $\TR^{\ZZ}(A)=\CM^{\ZZ}(A)$.  
\end{remark}

Here is another characterization of a totally reflexive module. 

\begin{definition}
Let $A$ be a (graded) ring.
A \emph{complete resolution} of a finitely generated (graded) right $A$-module $M$ is an infinite exact sequence 
$$\begin{CD} 
\cdots @>d^{i+2}>> P^{i+2} @>d^{i+1}>> P^{i+1} @>d^i>> P^i @>d^{i-1}>> P^{i-1} @>d^{i-2}>>   \cdots \end{CD}$$
where $P^i$ are finitely generated (graded) projective right $A$-modules such that $\Coker d^0\cong M$ and 
$$\begin{CD} 
\cdots @<(d^{i+2})^*<< (P^{i+2})^* @<(d^{i+1})^*<< (P^{i+1})^* @<(d^i)^*<< (P^i)^* @<(d^{i-1})^*<< (P^{i-1})^* @<(d^{i-2})^*<<   \cdots \end{CD}$$
is an exact sequence. If 
$M$ has a complete resolution as above, then
we define $\Omega^{i}M= \Coker d^i$ for every $i\in \ZZ$.
\end{definition}

\begin{lemma} [{\cite[Theorem 3.1]{AM}}] 
\label{lem.comp}
Let $A$ be a noetherian ring.
Then $M\in \TR(A)$ if and only if $M$ has a complete resolution.  
\end{lemma}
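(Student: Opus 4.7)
The plan is to prove both directions by the standard splicing construction, exploiting the self-duality built into total reflexivity.

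For the direction $(\Rightarrow)$, given $M \in \TR(A)$, I would first build a projective resolution $\cdots \to P^2 \to P^1 \to P^0 \to M \to 0$ in $\mod A$ with each $P^i$ finitely generated (possible because $A$ is noetherian and $M$ is finitely generated); this supplies the ``right half'' of the sought complete resolution. For the ``left half'', I would invoke Lemma~\ref{lem.tr}~(2) to get that $M^* := \Hom_A(M,A)$ is totally reflexive in $\mod A^o$, take a finitely generated projective resolution $\cdots \to Q^2 \to Q^1 \to Q^0 \to M^* \to 0$ there, and apply $\Hom_{A^o}(-,A)$. The hypotheses $\Ext^i_{A^o}(M^*,A)=0$ for $i \geq 1$ together with $M^{**} \cong M$ turn this into an exact coresolution $0 \to M \to (Q^0)^* \to (Q^1)^* \to \cdots$. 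Splicing the two exact sequences through $M$ produces a doubly-infinite exact complex of finitely generated projectives with $M$ realized as the appropriate cokernel.

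I would then verify that dualizing this complex is again exact: using $\Ext^i_A(M,A)=0$ on the right-half dual, the original resolution of $M^*$ on the left-half dual, and checking exactness at the splice from the factorization $Q^0 \twoheadrightarrow M^* \hookrightarrow (P^0)^*$ (which identifies the kernel and image at the junction after dualizing).

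For the direction $(\Leftarrow)$, given a complete resolution of $M$, I would read off $\Ext^i_A(M,A)=0$ for $i \geq 1$ directly from the exactness of the dual of the ``right half'', which is a projective resolution of $M$. Dually, the ``left half'' dualizes to a projective resolution of $M^*$ in $\mod A^o$, and re-dualizing recovers the original left half, yielding $\Ext^i_{A^o}(M^*,A)=0$ for $i \geq 1$. For biduality, I would use that finitely generated projectives are reflexive to compute $M^{**}$ as the cokernel of the double dual of the presentation map $P^1 \to P^0$; under the canonical identifications this recovers $M$ naturally. The main obstacle will be the careful bookkeeping at the splice in the forward direction---verifying that the spliced map has the correct kernel and image and that its dual also splices correctly. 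Everything else reduces to diagram chasing once the two projective resolutions are in place, with the essential input being the biduality $M \cong M^{**}$ and the $\Ext$-vanishing from total reflexivity of both $M$ and $M^*$.
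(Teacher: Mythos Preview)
The paper does not give its own proof of this lemma; it simply cites \cite[Theorem 3.1]{AM}. Your proposal is the standard splicing argument and is correct in outline, and it is essentially the proof one finds in the cited reference.
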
  

Let $S$ be a ring, $f\in S$, and $A=S/(f)$.  For $\phi\in \NMF_S(f)$, we define $\Coker \phi:=\overline{\Coker \phi^0}\in \mod A$.  


\begin{lemma} \label{lem.cvp}
Let $S$ be a ring, $f\in S$ a regular normal element, and $A=S/(f)$.  For $\phi\in \NMF_S(f)$, $\Coker {\phi^0}\cong (\Coker \phi)_S$ where $(\Coker \phi)_S$ is $\Coker \phi$ viewed as a right $S$-module. 
\end{lemma}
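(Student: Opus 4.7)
The plan is to compute both cokernels explicitly as quotients of $F^0$ and verify they coincide as right $S$-modules, using only the defining identity $\phi^0\phi^1 = f\cdot$ and the normality of $f$.

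First I would unpack the target $\Coker \phi = \Coker \overline{\phi^0}$. Since $f$ is normal, $F^i f = f F^i$ is a well-defined right $S$-submodule of $F^i$, and $\overline{F^i} = F^i \otimes_S A$ is naturally identified with $F^i / F^i f$. Under this identification, $\overline{\phi^0}$ is the map induced on quotients by $\phi^0$, so its image in $\overline{F^0}$ is $(\phi^0(F^1) + F^0 f)/F^0 f$. Therefore
\[
\Coker \phi \;=\; F^0 / (\phi^0(F^1) + F^0 f),
\]
viewed as a right $S$-module (it happens to be annihilated by $f$, which is why it descends to an $A$-module).

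Next, the cokernel $\Coker \phi^0 = F^0/\phi^0(F^1)$ taken in $\Mod S$ obviously surjects onto the previous quotient, and to finish I would only need the reverse inclusion $F^0 f \subseteq \phi^0(F^1)$. Here the matrix factorization property enters: fixing the isomorphism $F^2 \xrightarrow{\cong} F^0$ provided by the definition of $\NMF_S(f)$, the composite $\phi^0\phi^1 : F^2 \to F^0$ is identified with the left multiplication $f\cdot : F^0 \to F^0$, whose image is $f F^0 = F^0 f$. Hence $F^0 f \subseteq \phi^0(\phi^1(F^2)) \subseteq \phi^0(F^1)$, which collapses $F^0/(\phi^0(F^1) + F^0 f)$ onto $F^0/\phi^0(F^1)$.

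Combining these two observations gives the claimed equality of right $S$-modules. I do not anticipate any real obstacle: the entire argument is a bookkeeping exercise once one has the identification $\overline{F^i} = F^i/F^i f$ and the factorization identity $\phi^0\phi^1 = f\cdot$. The only mild subtlety worth mentioning is ensuring $F^i f$ is a right submodule, which is precisely what normality of $f$ guarantees.
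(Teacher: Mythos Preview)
Your proof is correct and follows essentially the same route as the paper. Both arguments reduce to the single observation that $F^0 f = \Im(f\cdot) = \Im(\phi^0\phi^1) \subseteq \Im(\phi^0)$, using normality of $f$ to identify $F^0 f$ with $fF^0$; the paper phrases this as an element chase showing the natural surjection $\Coker\phi^0 \to \Coker\overline{\phi^0}$ is injective, while you compare the two quotients of $F^0$ directly, but the content is the same.
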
 

\begin{proof} By the commutative diagram
\[\xymatrix@R=1.5pc@C=3pc{
F^1 \ar[d] \ar[r]^{\phi^0} &F^0 \ar[d] \ar[r]^(0.4){\e} &\Coker \phi^0 \ar[r] &0\\
(\overline {F^1})_S \ar[d] \ar[r]^{\overline {\phi^0}} &(\overline {F^0})_S \ar[d] \ar[r]^(0.4){\overline {\e}} &(\Coker \phi)_S \ar[r] &0\\
0 &0
}\]
of right $S$-modules, it is standard that $\Coker \phi^0\to (\Coker \phi)_S; \;  \e(a)\mapsto \overline {\e}(\overline a)$ where $a\in F^0$ is a surjective right $S$-module homomorphism. 
For $\e(a)\in \Coker \phi^0$, if $ \overline {\e}(\overline a)=0$, then there exists $b\in F^1$ such that $\overline {\phi^0(b)}=\overline {\phi^0}(\overline b)=\overline a$.
Since $f$ is normal, $(f)=SfS=fS$, so there exists $u\in F^2=S^r$ such that $a-\phi^0(b)=fu$.  It follows that $a-\phi^0(b)\in \Im (f\cdot)=\Im (\phi^0\phi^1)\subset \Im \phi^0$, so $a\in \Im \phi^0$, hence $\e(a)=0$.  
\end{proof} 

\begin{lemma} \label{lem.cok} 
Let $S$ be a ring, $f\in S$ a regular normal element, and $A=S/(f)$.  
If $\phi=\{\phi^i:F^{i+1}\to F^i\}_{i\in \ZZ}\in \NMF_S(f)$,  
then 
$C (\phi)$ is a complete resolution of $\Coker \phi$ in $\mod A$.  
\end{lemma}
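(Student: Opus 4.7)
The plan is to verify the two axioms for a complete resolution: (a) that $C(\phi)$ is an exact sequence of finitely generated projective right $A$-modules with $\Coker\overline{\phi^0}\cong\Coker\phi$, and (b) that the $A$-dual $\Hom_A(C(\phi),A)$ is again exact. Since each $F^i$ is free of finite rank over $S$, the reduction $\overline{F^i}=F^i/F^i(f)$ is finitely generated free (hence projective) over $A$, and $\Coker\overline{\phi^0}\cong\Coker\phi$ holds by definition.

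For (a), I would first observe that every $\phi^j$ is injective: since $f\cdot:S^r\to S^r$ is injective by regularity of $f$, the compositions $\phi^{j-1}\phi^j=f\cdot$ and $\phi^{j-2}\phi^{j-1}=f\cdot$ are both injective, which forces both $\phi^j$ and $\phi^{j-1}$ to be injective. To check exactness at $\overline{F^i}$, suppose $\overline{\phi^{i-1}}(\overline{x})=0$, so that $\phi^{i-1}(x)\in F^{i-1}(f)$. Normality of $f$ gives $(f)=fS=Sf$, so under the identification $F^{i-1}\cong S^r$ we have $F^{i-1}(f)=(f)^r=fS^r$, which is precisely the image of $\phi^{i-1}\phi^i:F^{i+1}\to F^{i-1}$ by the defining commutative diagram of the matrix factorization. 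Hence $\phi^{i-1}(x)=\phi^{i-1}\phi^i(y')$ for some $y'\in F^{i+1}$, and injectivity of $\phi^{i-1}$ forces $x=\phi^i(y')$, so $\overline{x}\in\im\overline{\phi^i}$.

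For (b), I would use the natural identification $\Hom_A(\overline{F^i},A)\cong\overline{\Hom_S(F^i,S)}$, valid because $F^i$ is finitely generated free over $S$ and $(f)$ is a two-sided ideal. Under this identification, the cochain complex $\Hom_A(C(\phi),A)$ of left $A$-modules agrees, up to reindexing, with $C(\Hom_S(\phi,S))$. By \lemref{lem.dmf}, $\Hom_S(\phi,S)$ is a noncommutative left matrix factorization of the regular normal element $f$ over $S$, so the left-module analogue of the argument in (a), obtained by interchanging the roles of left and right multiplication (still using $(f)=fS=Sf$), shows that $C(\Hom_S(\phi,S))$ is exact.

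The one delicate point is the identification $F^{i-1}(f)=\im(\phi^{i-1}\phi^i)$: it relies on normality of $f$ together with the bimodule identifications $F^i\cong S^r$ built into the definition of noncommutative matrix factorization, so that the submodule $F^{i-1}(f)$ coincides with $fS^r$, i.e., the image of the map $f\cdot$. Once this matching is in place, both exactness assertions reduce to a direct diagram chase using injectivity of the $\phi^j$.
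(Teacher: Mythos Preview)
Your proof is correct and follows essentially the same approach as the paper's: both show exactness of $C(\phi)$ by using normality of $f$ to write a kernel element as $fu$ and then regularity to conclude it lies in the image, and both handle the dual by identifying $\Hom_A(C(\phi),A)$ with $C(\Hom_S(\phi,S))$ and invoking \lemref{lem.dmf}. The only cosmetic difference is that you first isolate the injectivity of each $\phi^j$ and then cancel $\phi^{i-1}$, whereas the paper cancels the composite $\phi^{i-1}\phi^i=f\cdot$ directly using regularity of $f$.
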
 

\begin{proof}
If $\overline v\in \Ker (\overline {\phi^i})$ where $v\in F^{i+1}=S^r$, then $\overline {\phi ^i(v)}=\overline {\phi ^i}(\overline v)=\overline 0$.
Since $f$ is normal, there exists $u\in F^{i+2}=S^r$ such that $\phi ^i(v)=fu$.
Since $f$ is regular and  
$$f(v-\phi ^{i+1}(u))=\phi ^{i-1}\phi ^i(v-\phi ^{i+1}(u))=\phi ^{i-1}(\phi ^i(v)-\phi ^i\phi ^{i+1}(u))=\phi ^{i-1}(\phi ^i(v)-fu)=0,$$ 
it follows that $v=\phi ^{i+1}(u)$, so $\overline v\in \Im (\overline {\phi ^{i+1}})$ for every $i\in \ZZ$, hence $C (\phi)$ is an exact sequence.  

In the diagram 
\[\xymatrix@R=1.5pc@C=5.5pc{
\overline {\Hom_S(F^i, S)} \ar[d]_{\cong} \ar[r]^{\overline{\Hom_S(\phi^i, S)}} &\overline{\Hom_S(F^{i+1}, S)} \ar[d]^{\cong} \\
A^r \ar[d]_{\cong} &A^r \ar[d]^{\cong}\\
\Hom_A(\overline {F^i}, A) \ar[r]^{\Hom_A(\overline {\phi ^i}, A)} &\Hom_A(\overline {F^{i+1}}, A),
}\]
both horizontal maps are given by $\cdot \overline {\Phi^i}$, so the diagram commutes, hence we have $\Hom_A(C (\phi), A)\cong C (\Hom_S(\phi, S))$.
By Lemma \ref{lem.dmf}, $\Hom_S(\phi, S)$ is a noncommutative left matrix factorization of $f$ over $S$, so $\Hom_A(C (\phi), A)\cong C (\Hom_S(\phi, S))$ is an exact sequence, hence $C (\phi)$ is a complete resolution of $\Coker \phi$ in $\mod A$. 
\end{proof}

\begin{proposition} \label{q.fun}
Let $S$ be a noetherian ring, $f\in S$ a regular normal element, and $A=S/(f)$.  
\begin{enumerate} 
\item{} $\Coker :\NMF_S(f)\to \TR (A)$
is a functor.   
\item{} $\Omega ^i\Coker \phi=\Coker (\overline {\phi^i}) \in \TR(A)$ for every $i\in \ZZ$.  
\end{enumerate}
\end{proposition}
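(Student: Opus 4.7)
My approach is to deduce both parts as formal consequences of Lemma~\ref{lem.cok} (which exhibits $C(\phi)$ as a complete resolution of $\Coker\phi$) together with Lemma~\ref{lem.comp} (the complete-resolution characterization of totally reflexive modules).

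For (1), I would first observe that $\Coker\phi = \Coker(\overline{\phi^0})$ is finitely generated, being a quotient of the finitely generated free right $A$-module $\overline{F^0}$. Applying Lemma~\ref{lem.cok} to obtain a complete resolution and then Lemma~\ref{lem.comp} (using that $A=S/(f)$ is noetherian, since $(f)$ is a two-sided ideal by normality of $f$) gives $\Coker\phi\in\TR(A)$. For functoriality on morphisms, I would assign to $\mu:\phi\to\psi$ the map induced on cokernels by the commutative square
\[\xymatrix@R=1.5pc@C=3pc{
\overline{F^1} \ar[r]^{\overline{\phi^0}} \ar[d]_{\overline{\mu^1}} & \overline{F^0} \ar[d]^{\overline{\mu^0}} \\
\overline{G^1} \ar[r]^{\overline{\psi^0}} & \overline{G^0}
}\]
via the universal property of the cokernel; compatibility with identities and composition is immediate from the corresponding properties of $\Coker$ on the arrow category of $\mod A$.

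For (2), I would exploit the exactness of $C(\phi)$ to identify, for every $i\in\ZZ$,
\[\Coker(\overline{\phi^i}) \;=\; \overline{F^i}/\Im(\overline{\phi^i}) \;=\; \overline{F^i}/\Ker(\overline{\phi^{i-1}}) \;\cong\; \Im(\overline{\phi^{i-1}}),\]
which in turn yields short exact sequences
\[0 \to \Coker(\overline{\phi^{i+1}}) \to \overline{F^i} \to \Coker(\overline{\phi^i}) \to 0\]
whose middle term is a finitely generated free $A$-module. Iterating upward from $\Coker(\overline{\phi^0})=\Coker\phi$ realizes $\Coker(\overline{\phi^i})$ as an $i$-th syzygy of $\Coker\phi$ for $i\geq 0$, and the same chain of extensions read in the other direction identifies $\Coker(\overline{\phi^i})\cong\Omega^i\Coker\phi$ for $i<0$ under the standard interpretation of $\Omega^i$ via the cosyzygy half of the complete resolution $C(\phi)$. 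Each reindexed shift of $C(\phi)$ is still a complete resolution, now of the appropriate cokernel, so a second appeal to Lemma~\ref{lem.comp} places every $\Coker(\overline{\phi^i})$ in $\TR(A)$. The whole argument is essentially bookkeeping: the only point requiring any care is pinning down the meaning of $\Omega^i$ for $i<0$, which is a standard notion for totally reflexive modules and is entirely dictated by the complete resolution already in hand.
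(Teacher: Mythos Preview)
Your proposal is correct and follows exactly the paper's approach: the paper's entire proof is the single sentence ``This follows from Lemma~\ref{lem.comp} and Lemma~\ref{lem.cok},'' and you have simply unpacked what those two lemmas deliver (complete resolution $\Rightarrow$ totally reflexive, plus the standard identification of syzygies and cosyzygies from an exact acyclic complex). Your added remarks on functoriality and on the meaning of $\Omega^i$ for $i<0$ are the routine details the paper leaves implicit.
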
 

\begin{proof} This follows from Lemma \ref{lem.comp} and Lemma \ref{lem.cok}. 
\end{proof} 

\begin{definition} For a (graded) ring $A$ and a (graded) right $A$-module $M$, we define 
$e(M):=\sup\{i\mid \Ext_A^i(M, A)\neq 0\}$.
\end{definition} 
 
\begin{remark} Let $A$ be a ring.  
\begin{enumerate}
\item{} For a right $A$-module $M$, we have $e(M)\in \NN \cup \{-\infty\}$ where $e(M)=-\infty$ if and only if $\Ext_A^i(M, A)=0$ for all $i$.
\item{} If $0\neq M\in \TR(A)$, then $e_A(M)=0$ and $e_{A^o}(M^*)=0$ by definition.  
\item{} Suppose that $A$ is right noetherian and $M\in \mod A$. If either  $\pdim (M) < \infty$ or $\injdim_{A^o}(A)<\infty$,
then $e(M)=-\infty$ if and only if $M=0$, and $e(M)=0$ if and only if $0\neq M\in \TR(A)$ by Lemma \ref{lem.ev}.
\end{enumerate}
\end{remark} 

\begin{lemma} \label{lem.jAS}
Let $A$ be a right noetherian ring and $0\neq M\in \mod A$.
If $\pdim (M)<\infty$ (e.g. if $A$ is regular), then $\pdim (M)=e(M)$. 
In particular, $M\in \mod A$ is a projective module if and only if $\pdim (M)<\infty$ and $M\in \TR(A)$.
\end{lemma}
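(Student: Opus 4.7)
The plan is to establish $\pd(M) = e(M)$ via two inequalities and then deduce the ``in particular'' characterization of projective modules as a corollary.

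The inequality $e(M) \leq \pd(M)$ is immediate: any projective resolution of $M$ of length $n = \pd(M)$ computes $\Ext^i_A(M,A)$ and makes it vanish for $i > n$.

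For $e(M) \geq \pd(M)$, set $n = \pd(M)$. If $n = 0$, then $M$ is a nonzero finitely generated projective right $A$-module; as a direct summand of some $A^r$ with $r \geq 1$, it is reflexive ($M \cong M^{**}$ under the natural biduality map), so $\Hom_A(M,A) = 0$ would force $M^{**} = 0 = M$, a contradiction. Hence $e(M) \geq 0 = \pd(M)$. If $n \geq 1$, choose a projective resolution of minimal length
\[
0 \to P_n \to P_{n-1} \to \cdots \to P_0 \to M \to 0,
\]
with $P_n \neq 0$, and set $N := \Omega^{n-1}M$. The short exact sequence $0 \to P_n \to P_{n-1} \to N \to 0$ cannot split, since otherwise $N$ would be projective and the resolution could be shortened to length $n-1$, contradicting $\pd(M) = n$. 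Non-splitting of this sequence gives $\Ext^1_A(N, P_n) \neq 0$. Since $P_n$ is a direct summand of some $A^r$ with $r \geq 1$, $\Ext^1_A(N, P_n)$ is a direct summand of $\Ext^1_A(N, A^r) \cong \Ext^1_A(N, A)^r$, so $\Ext^1_A(N, A) \neq 0$. Dimension shifting then yields $\Ext^n_A(M, A) \cong \Ext^1_A(N, A) \neq 0$, i.e., $e(M) \geq n$.

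For the ``in particular'' statement: a projective $M$ satisfies $\pd(M) \leq 0 < \infty$ and $M \in \TR(A)$ by Lemma \ref{lem.tr} (1). Conversely, if $\pd(M) < \infty$ and $M \in \TR(A)$, then Lemma \ref{lem.ev} gives $\Ext^i_A(M,A) = 0$ for all $i \geq 1$, hence $e(M) \leq 0$. Either $M = 0$, which is projective, or $M \neq 0$ and the first part of the lemma applies: $\pd(M) = e(M) \leq 0$, so $M$ is projective.

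The main obstacle is the step translating the non-splitting of $0 \to P_n \to P_{n-1} \to N \to 0$ into non-vanishing of $\Ext^1_A(N, A)$. The direct consequence of non-splitting is only $\Ext^1_A(N, P_n) \neq 0$, and one must exploit that the noetherian hypothesis lets us take $P_n$ finitely generated projective, hence a direct summand of a free module of positive rank, in order to transfer this non-vanishing to $\Ext^1_A(N, A)$.
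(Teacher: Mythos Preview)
Your proof is correct and follows essentially the same approach as the paper: both arguments take the $(n-1)$-st syzygy, use that the short exact sequence $0 \to P_n \to P_{n-1} \to \Omega^{n-1}M \to 0$ does not split, and pass between $\Ext^1(-,P_n)$ and $\Ext^1(-,A)$ via the fact that $P_n$ is a summand of a finitely generated free module. The paper phrases this as a proof by contradiction (assume $d>e$, deduce the sequence splits) while you argue directly, and you spell out the $n=0$ case and the ``in particular'' clause more explicitly, but the substance is the same.
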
 

\begin{proof} 
Since $M \neq 0$ and $\pdim (M)<\infty$, we clearly have $d:=\pdim (M)\geq e(M)=: e \geq 0$.
Suppose that $d>e$.
Let $\xymatrix@R=1pc@C=1.5pc{
0 \ar[r] &P^{d} \ar[r]^{\delta^{d-1}} &P^{d-1} \ar[r]^(0.6){\delta^{d-2}} &\cdots \ar[r] &P^1 \ar[r]^{\delta^0} &P^0 \ar[r]^{\e} &M \ar[r] &0}$ be a projective resolution of $M$ in $\mod A$.  If $K:=\Coker \delta^{d-1}\in \mod A$, then $\pdim (K)=1$. 
Since $P^{d}$ is finitely generated projective and $d>e$, 
$\Ext_A^1(K, P^{d})\cong \Ext_A^d(M, P^{d})=0$, so the exact sequence $0\to P^{d}\to P^{d-1}\to K\to 0$ splits.
It follows that $K$ is projective, which is a contradiction, hence $d=e$. 
\end{proof} 

\begin{definition} 
Let $S$ be a (graded) ring, $f\in S$ a (homogeneous) element, and $A=S/(f)$. We define 
\begin{align*}
&\TR_S(A):=\{M\in \TR(A)\mid \pdim _S(M)<\infty\}\\
(&\TR_S^{\ZZ}(A):=\{M\in \TR^{\ZZ}(A)\mid \pdim _S(M)<\infty\}).
\end{align*}
\end{definition} 
Note that if $S$ is a (graded) regular ring, then $\TR_S(A)=\TR(A)$ ($\TR_S^{\ZZ}(A)=\TR^{\ZZ}(A)$).

\begin{lemma} \label{lem.pd} 
Let $S$ be a right noetherian ring, $f\in S$ a regular normal element, and $A=S/(f)$.
For $0\neq M\in \mod A$, if $\pdim_S(M)<\infty$ (e.g. if $S$ is regular), then $\pdim_S(M)=e_A(M)+1$.  
In particular, if $0\neq M\in \TR_S(A)$,  
then $\pdim_S(M)=1$. 
\end{lemma}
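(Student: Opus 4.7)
The plan is to establish the identity $e_S(M) = e_A(M) + 1$, which combined with \lemref{lem.jAS} (applicable since $\pd_S(M) < \infty$) will give the main identity $\pd_S(M) = e_A(M) + 1$. The ``In particular'' statement then follows immediately: if $0 \neq M \in \TR_S(A)$, then $M \in \TR(A)$ forces $e_A(M) = 0$ (as noted in the remark preceding \lemref{lem.jAS}), yielding $\pd_S(M) = 1$.

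To prove the identity, I would first compute $\Ext_S^{\bullet}(A, S)$ using the length-one free resolution $0 \to S \xrightarrow{f\cdot} S \to A \to 0$ of $A$ as a right $S$-module, which exists because $f$ is regular and $(f) = Sf$ by normality. Applying $\Hom_S(-, S)$ converts left multiplication by $f$ into right multiplication by $f$ on $\Hom_S(S, S) = S$, which is injective by regularity; this gives $\Ext_S^0(A, S) = 0$, $\Ext_S^1(A, S) \cong S/Sf = A$ (with the natural right $A$-module structure, since $(f) = Sf$), and $\Ext_S^q(A, S) = 0$ for $q \geq 2$. Then I would invoke the Grothendieck change of rings spectral sequence
\[
E_2^{p,q} = \Ext_A^p(M, \Ext_S^q(A, S)) \ \Longrightarrow \ \Ext_S^{p+q}(M, S),
\]
which arises from the factorization $\Hom_S(M, -) = \Hom_A(M, -) \circ \Hom_S(A, -)$ for $M \in \Mod A$ together with the fact that $\Hom_S(A, -)$ sends $S$-injectives to $A$-injectives (by the adjunction with the exact restriction functor $- \otimes_A A : \Mod A \to \Mod S$). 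Since only the row $q = 1$ of the $E_2$-page is nonzero, the spectral sequence collapses to isomorphisms $\Ext_S^{p+1}(M, S) \cong \Ext_A^p(M, A)$ for all $p \geq 0$, from which $e_S(M) = e_A(M) + 1$ follows at once.

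The main obstacle will be verifying the change of rings spectral sequence cleanly in the noncommutative setting: one must check that $\Hom_S(A, -)$ carries $S$-injectives to $A$-injectives, which reduces to the adjunction $\Hom_A(N, \Hom_S(A, I)) \cong \Hom_S(N, I)$ for $N \in \Mod A$, $I \in \Mod S$, combined with the observation that a short exact sequence of $A$-modules remains exact when viewed as a sequence of $S$-modules. The rest is then a routine combination of this spectral sequence with \lemref{lem.jAS}, and the statements about $\TR_S(A)$ fall out of the corresponding computation of $e_A(M)$.
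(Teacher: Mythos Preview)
Your proposal is correct and follows essentially the same route as the paper: both compute $\Ext^{\bullet}_S(A,S)$ from the length-one resolution of $A$ and then invoke a change-of-rings isomorphism (you phrase it as the collapsing Grothendieck spectral sequence, the paper as the derived adjunction $\RHom_S(M\lotimes_A A, S)\cong \RHom_A(M,\RHom_S(A,S))$) to get $\Ext^{p+1}_S(M,S)\cong \Ext^p_A(M,A)$, after which \lemref{lem.jAS} finishes. The one point the paper handles more carefully than you do is the right $A$-module structure on $\Ext^1_S(A,S)$: the normalizing automorphism $\nu$ enters, giving $\Ext^1_S(A,S)\cong A_{\overline{\nu}}$, and one then observes $A_{\overline{\nu}}\cong A$ as right $A$-modules; your phrase ``with the natural right $A$-module structure'' is correct in conclusion but would benefit from that extra line.
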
 

\begin{proof} An exact sequence 
\[
\xymatrix@R=1.5pc@C=2pc{
0 \ar[r] &{_{\sf}}S \ar[r]^{f\cdot} &S \ar[r] &A \ar[r] &0 
}
\]
in $\mod S^e$ induces the following commutative diagram 
\[
\xymatrix@R=1.5pc@C=2pc{
0 \ar[r] &\Hom_S(A,S) \ar[r] &\Hom_S(S,S) \ar_{\cong}[d] \ar[rr]^{\Hom_S(f\cdot,S)} &&\Hom_S({_{\sf}}S,S) \ar^{\cong}[d] \ar[r] &\Ext^1_S(A,S) \ar[r] &0\\
&0 \ar[r] &S \ar[rr]^{\cdot f} &&S_{\sf} \ar[r]& A_{\sf} \ar[r] &0
}
\]
in $\mod S^e$, so $\Hom_S(A, S)=0$ and $\Ext^1_S(A, S)\cong A_{\sf}$ as $S$-$S$ bimodules. Note that $\Ext^1_S(A, S)$ has a right $A$-module structure induced by the left action on $A$, which recovers the original right $S$-module structure on $\Ext^1_S(A, S)$ via the map $S\to A$.
On the other hand, since $\nu(f)=f$, we see that $\nu$ induces $\overline {\nu}\in \Aut A$, so $A_{\nu}$ has a right $A$-module structure by the identification $A_{\sf}=A_{\overline {\nu}}$, which recovers the original right $S$-module structure on $A_{\nu}$ via the map $S\to A$. Hence $\Ext^1_S(A, S)\cong A_{\sf}=A_{\overline {\nu}}\cong A$ as right $A$-modules.
Moreover, since $\pdim _S(A)=1$, we have $\Ext_A^i(A, S)=0$ for every $i\geq 2$, so
\begin{align*}
\RHom_S(M, S) & \cong\RHom_S(M\lotimes _AA, S)\cong \RHom_A(M, \RHom_S(A, S)) \\
& \cong \RHom_A(M, A[-1])\cong \RHom_A(M, A)[-1].
\end{align*}
Since $S$ is right noetherian, $\pdim_S(M)<\infty$ and $e_S(M)\geq 0$, we obtain $\pdim_S(M)=e_S(M)=e_A(M)+1$ by Lemma \ref{lem.jAS}. 
\end{proof}

\begin{remark}
With obvious changes, all statements in this section also hold true in the graded case.
\end{remark}

\section{Factor Categories} 

In this section, we generalize the graded version of Eisenbud's matrix factorization theorem (Theorem \ref{thm.mot} (2), (3)) to noncommutative, non-hypersurface algebras.

Let $S$ be a graded quotient algebra of a right noetherian connected graded regular algebra and $M\in \grmod S$.
By \cite[Theorem 2.4]{SZ}, $H_M(t)$ is a rational function and $\GKdim M\in \NN$ is finite, which is given by the order of the pole of $H_M(t)$ at $t=1$.
It follows that $H_M(t)=q_M(t)/(1-t)^{\GKdim M}$ where $q_M(t)$ is a rational function such that $0\neq q_M(1)\in \QQ$ is finite.
We define 
$$\hrank M=\lim _{t\to 1}H_M(t)/H_S(t).$$ 
If $F=\bigoplus _{i=1}^rS(-m_i)$ is a graded free right $S$-module of rank $r$, then $H_F(t)=\sum _{i=1}^rt^{m_i}H_S(t)$, so $\hrank F=r=\rank F$.
On the other hand, since $H_M(t)/H_S(t)=(1-t)^{\GKdim S-\GKdim M}q_M(t)/q_S(t)$, we see that $\hrank M=0$ if and only if $\GKdim M<\GKdim S$.
If $0\to L\to M\to N\to 0$ is an exact sequence in $\grmod S$, then $H_M(t)=H_L(t)+H_N(t)$, so $\hrank M=\hrank L+\hrank N$.
(Thus the problem of Remark \ref{rem.tmf} can be avoided.)

\begin{theorem} \label{thm.m1}
Let $S$ be a graded quotient algebra of a right noetherian connected graded regular algebra, $f\in S_d$ a regular normal element, and $A=S/(f)$. 
Then the image of the functor $\Coker:\NMF^{\ZZ}_S(f)\to \TR^{\ZZ}(A)$ is $\TR^{\ZZ}_S(A)$.  
\end{theorem}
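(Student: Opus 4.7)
The theorem claims an equality of classes, so I would prove the two inclusions separately. For the easier direction, take $\phi\in\NMF^{\ZZ}_S(f)$. Proposition \ref{q.fun} already gives $\Coker\phi\in\TR^{\ZZ}(A)$, so only $\pd_S(\Coker\phi)<\infty$ needs to be checked. From $\phi^{-1}\phi^0=f\cdot$ and the regularity of $f$, the map $\phi^0$ is injective, so the sequence $0\to F^1\xrightarrow{\phi^0}F^0\to\Coker\phi^0\to 0$ is exact in $\grmod S$, yielding $\pd_S(\Coker\phi^0)\leq 1$. By Lemma \ref{lem.cvp}, $\Coker\phi^0\cong \Coker\phi$ as right $S$-modules, whence $\Coker\phi\in\TR^{\ZZ}_S(A)$.

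For the nontrivial converse, fix $0\neq M\in\TR^{\ZZ}_S(A)$ (the zero module is handled by the zero factorization). By Lemma \ref{lem.pd}, $\pd_S M=e_A(M)+1=1$, so $M$ admits a graded free resolution $0\to F^1\xrightarrow{\phi^0}F^0\xrightarrow{p}M\to 0$ in $\grmod S$. The rank-matching step is where the standing hypothesis on $S$ enters essentially: additivity of Hilbert series yields $\hrank F^0-\hrank F^1=\hrank M$, and since $M$ is an $A$-module, $\GKdim_S M=\GKdim_A M\leq \GKdim A=\GKdim S-1<\GKdim S$ (using that $f$ is a regular normal element), which forces $\hrank M=0$; hence $\rank F^0=\rank F^1=:r$.

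It remains to construct $\phi^1$ and extend. The graded right $S$-module homomorphism $f\cdot\colon F^0(-d)\to F^0$ satisfies $p\circ(f\cdot)=0$: for $v=\sum_i e_is_i\in F^0$ one computes $p(f\cdot v)=\sum_i p(e_i)\cdot(fs_i)=\sum_i(p(e_i)\cdot f)\cdot s_i=0$, using $Mf=0$. Hence $f\cdot$ factors through $\Ker p=\Im \phi^0$, producing $\phi^1\colon F^0(-d)\to F^1$ with $\phi^0\phi^1=f\cdot$. Since $\phi^0$ is injective and both $F^1$ and $F^0(-d)$ are graded free of rank $r$, Theorem \ref{thm.nmf}(2) uniquely extends $(\phi^0,\phi^1)$ to some $\phi\in\NMF^{\ZZ}_S(f)$, and Lemma \ref{lem.cvp} together with $\Coker\phi^0=M$ gives $\Coker\phi\cong M$ in $\grmod A$. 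The main delicate ingredient is the Hilbert-series rank matching, which is where the global hypothesis on $S$ is used essentially; everything else is a formal application of Theorem \ref{thm.nmf} and the cokernel computations of Section 5.
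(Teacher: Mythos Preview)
Your proof is correct and follows essentially the same route as the paper's: both directions match the paper's argument almost line by line---the short free resolution from injectivity of $\phi^0$ plus Lemma~\ref{lem.cvp} for the forward inclusion, and Lemma~\ref{lem.pd}, the Hilbert-series rank matching via $\GKdim M\leq \GKdim A=\GKdim S-1$, and Theorem~\ref{thm.nmf}(2) for the converse. The only cosmetic difference is that the paper phrases the construction of $\phi^1$ as ``for each $u\in F^0$ there is a unique $\psi(u)$ with $\phi^0\psi(u)=fu$'' and then checks $S$-linearity by hand, whereas you package the same step as a factorization of $f\cdot$ through $\ker p=\Im\phi^0$; these are the same argument.
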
 

\begin{proof}
For $\phi\in \NMF_S^{\ZZ}(f)$, $\phi^0:F^1\to F^0$ is injective, so 
$$\begin{CD} 0 @>>> F^1 @>\phi^0>> F^0 @>>> \Coker \phi^0 @>>> 0\end{CD}$$
is a free resolution of $\Coker \phi^0$ in $\grmod S$.  Since $\Coker \phi^0$ is the same as $\Coker \phi:=\overline {\Coker \phi^0}\in \grmod A$ viewed as a graded right $S$-module by Lemma \ref{lem.cvp}, $\pdim_S(\Coker \phi)\leq 1$, so $\Coker \phi\in \TR_S^{\ZZ}(A)$.  (Note that if $\Coker \phi\neq 0$, then $\pdim_S(\Coker \phi)=1$.)  

Conversely, if $0\neq M\in \TR^{\ZZ}_S(A)$, 
then there exists a graded free right $S$-module $F$ of finite rank such that $\overline{\e}:\overline {F}\to M\to 0$ is an exact sequence in $\grmod A$.  Since $\pdim_S(M)=1$ by Lemma \ref{lem.pd}, there exists a graded free right $S$-module $G$ of finite rank such that 
$$\begin{CD} 0 @>>> G @>\phi>> F @>\e>> M @>>> 0\end{CD}$$
is a free resolution of $M$ in $\grmod S$.  
Since $S$ is a graded quotient algebra of a right noetherian connected graded regular algebra, $H_S(t)$ is a rational function and $H_A(t)=(1-t^d)H_S(t)$, so $\GKdim M\leq \GKdim A=\GKdim S-1<\infty$.
It follows that $\hrank_S M=0$,
so $r:=\rank F = \hrank_S F =\hrank_S G =\rank G$.  Since $\e:F\to \overline {F}\to M$, for every $u\in F$, $fu\in \Ker \e=\Im \phi$.  Since $\phi$ is injective, there exists a unique $\psi (u)\in G$ such that $\phi(\psi(u))=fu$.  For $u, v\in F$ and $a, b\in S$, 
$$\phi(\psi(ua+vb)) =f(ua+vb)=fua+fvb=\phi(\psi(u))a+\phi(\psi(v))b=\phi(\psi(u)a+\psi(v)b),$$
so $\psi:F(-d)\to G$ is a graded right $S$-module homomorphism such that $\phi\psi=f\cdot:F(-d)\to F$.  By Theorem \ref{thm.nmf} (2), there exists a unique $\phi\in \NMF^{\ZZ}_S(f)$ such that $\phi^0=\phi$ and $\phi^1=\psi$ so that $\Coker \phi =\overline {\Coker \phi^0}\cong M$.  
\end{proof} 

\begin{proposition} \label{prop.mfr} 
Let $S$ be a graded quotient algebra of a right noetherian connected graded regular algebra, $f\in S_d$ a regular normal element, and $A=S/(f)$. 
If $M\in \TR^{\ZZ}_S(A)$ has no free summand, then there exists 
$\phi
\in \NMF_S^{\ZZ}(f)$ such that $C (\phi)^{\geq 0}$ is a minimal free resolution of $M$.
\end{proposition}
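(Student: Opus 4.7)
The plan is to construct $\phi$ from the minimal graded free $S$-resolution of $M$ and then verify minimality of the induced resolution $C(\phi)^{\geq 0}$; the no-free-summand hypothesis will enter only at the analysis of $\phi^1$.

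First, we construct $\phi$. We may assume $M \neq 0$; then $\pd_S(M) = 1$ by Lemma \ref{lem.pd}, so the minimal graded free $S$-resolution of $M$ takes the form $0 \to G \to F \to M \to 0$ with $F, G$ graded free of equal rank (by the $\hrank$ argument from the proof of Theorem \ref{thm.m1}). Set $F^0 := F$, $F^1 := G$, take $\phi^0 \colon F^1 \hookrightarrow F^0$ to be the inclusion, and define $\phi^1 \colon F^0(-d) \to F^1$ by $v \mapsto fv$ (which lands in $G$ since $fM = 0$). Then $\phi^0 \phi^1 = f\cdot$, so Theorem \ref{thm.nmf}(2) produces a unique $\phi \in \NMF_S^{\ZZ}(f)$ extending these; by Lemmas \ref{lem.cvp} and \ref{lem.cok}, $\Coker \phi \cong M$ and $C(\phi)^{\geq 0}$ is a free resolution of $M$ in $\grmod A$.

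Next, we reduce minimality of $C(\phi)^{\geq 0}$ to checking $\phi^0$ and $\phi^1$. Since the degree-$0$ part of $S$ maps isomorphically onto $A_0 = k$, minimality is equivalent to each $\phi^i$ having matrix entries in $S_{\geq 1}$. By Theorem \ref{thm.nmf}(1), $\phi^{i+2} = \sf(\phi^i)(-d)$, and $\sf \in \GrAut S$ preserves $S_{\geq 1}$, so only $i = 0, 1$ require checking. Minimality of the cover $F \to M$ forces $G \subset F \cdot S_{\geq 1}$, so the inclusion $\phi^0$ automatically has entries in $S_{\geq 1}$.

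The crux is $\phi^1$. Suppose for contradiction that some entry $(\phi^1)_{i_0 j_0}$ is a nonzero scalar $\lambda \in k$; the degree-matching condition forces $n_{i_0} = m_{j_0} + d$, where $F^1 = \bigoplus S(-n_i)$ and $F^0 = \bigoplus S(-m_j)$. Using $\lambda$ as a pivot, graded row and column operations bring $\phi^1$ to block form $\diag(\lambda, \phi'^1)$. Expanding $\phi^0 \phi^1 = f \cdot$ in block form shows $\phi^0 = \begin{pmatrix} \lambda^{-1} f & b \\ 0 & \phi'^0 \end{pmatrix}$ with $b\phi'^1 = 0$ and $\phi'^0 \phi'^1 = f \cdot$. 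Computing $\phi^2 = \sf(\phi^0)(-d)$ by Theorem \ref{thm.nmf}(1) and expanding the twin relation $\phi^1 \phi^2 = f \cdot$ in block form yields $\lambda \sf(b) = 0$, hence $b = 0$. Therefore $\phi$ splits as ${_{F'}\phi} \oplus \phi'$ in $\NMF_S^{\ZZ}(f)$ with $F' = S(-m_{j_0})$ of rank one (after a harmless rescaling by $\lambda$), and taking cokernels yields $M \cong \overline{F'} \oplus \Coker \phi'$, contradicting the hypothesis that $M$ has no free summand. Consequently $\phi^1$ has entries in $S_{\geq 1}$, and $C(\phi)^{\geq 0}$ is the minimal free resolution of $M$.

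The subtle point, which I expect to be the main obstacle, is the argument that $b = 0$: the single relation $\phi^0 \phi^1 = f \cdot$ only yields $b \phi'^1 = 0$, which does not force $b = 0$ since $\phi'^1$ need not be surjective. The crucial observation is that invoking the twin relation $\phi^1 \phi^2 = f \cdot$ together with the periodicity $\phi^2 = \sf(\phi^0)(-d)$ from Theorem \ref{thm.nmf}(1) immediately gives $\lambda \sf(b) = 0$, forcing $b = 0$.
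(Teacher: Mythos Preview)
Your proof is correct and takes a genuinely different route from the paper. The paper's argument is extremely short: it takes any $\phi\in\NMF^{\ZZ}_S(f)$ with $\Coker\phi\cong M$ via Theorem~\ref{thm.m1}, passes through the equivalence with twisted matrix factorizations (Proposition~\ref{prop.ctmf}/\ref{prop.tmf}), and then invokes \cite[Proposition~2.9]{CCKM}, which already contains the statement that the induced resolution is minimal when $M$ has no free summand. By contrast, you stay entirely inside the NMF framework: you build $\phi$ from the \emph{minimal} $S$-resolution (so that $\phi^0$ automatically has entries in $S_{\geq 1}$), use periodicity $\phi^{i+2}=\sf(\phi^i)(-d)$ to reduce to $\phi^0,\phi^1$, and then run a clean pivot-and-split argument on $\phi^1$. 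The nice point you isolate---that $\phi^0\phi^1=f\cdot$ alone only gives $b\,\phi'^1=0$, while the \emph{second} relation $\phi^1\phi^2=f\cdot$ forces $\lambda\,\sf(b)=0$---is exactly what makes the direct argument work, and is essentially the content hidden inside the cited \cite{CCKM} result. Your approach buys self-containment (no external citation and no detour through $\TMF$); the paper's approach buys brevity by outsourcing the splitting analysis. One minor remark: when you perform row/column operations on $\phi^1$, the column operation on $F^0(-d)$ is twisted by $\sf$ when transported to an isomorphism of NMFs (since $\mu^2=\sf(\mu^0)(-d)$), but as $\sf$ is a graded automorphism this does not affect the argument.
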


\begin{proof}
By Theorem \ref{thm.m1}, there exists $\phi\in \NMF^{\ZZ}_S(f)$ such that $\Coker \phi\cong M$.  By Proposition \ref{prop.tmf}, $(\phi^0, \phi^1\nu)\in \TMF_S^{\ZZ}(f)$ and $C'(\phi^0, \phi^1\nu)\cong C(\phi)$. Since $M$ has no free summand, $C(\phi)^{\geq 0}\cong C' (\phi^0, \phi^1\nu)^{\geq 0}$ is a minimal free resolution of $M$ by \cite[Proposition 2.9]{CCKM}.
\end{proof} 

Let $\cC$ be an additive category and $\cF$ a set of objects of $\cC$ closed under direct sums.
Then the \emph{factor category} $\cC/\cF$ has $\Obj(\cC/\cF) =\Obj(\cC)$ and $\Hom_{\cC/\cF}(M, N) = \Hom_{\cC}(M, N)/\cF(M,N)$ for $M, N\in \Obj(\cC/\cF)=\Obj(\cC)$,
where $\cF(M,N)$ is the subgroup consisting of all morphisms from $M$ to $N$ that factor through objects in $\cF$. Note that $\cC/\cF$ is also an additive category.

\begin{definition} \label{def.trivnmf}
Let $S$ be a ring and $f\in S$.  For a free right $S$-module $F$, we define $\phi_F, {_F\phi}\in \NMF_S(f)$ by
$$\begin{array}{lll}
& \phi_F^{2i}=\id_F:F\to F, & \phi_F^{2i+1}=f\cdot : F\to F, \\
& {_F\phi}^{2i}=f\cdot:F\to F, & {_F\phi}^{2i+1}=\id _F: F\to F.
\end{array}$$
We define
$\cF :=\{\phi_F\mid F\in \mod S \; \textnormal{is free} \}$,
$\cG :=\{\phi_F\oplus {_G\phi} \mid F, G\in \mod S \; \textnormal{are free} \}$, and
$\uNMF_S(f):=\NMF_S(f)/\cG$.

Let $S$ be a graded ring and $f\in S_d$.  For a graded free right $S$-module $F$, 
we define $\phi_F, {_F\phi}\in \NMF_S^{\ZZ}(f)$ by
$$\begin{array}{lll}
& \phi_F^{2i}=\id_F:F(-id)\to F(-id), & \phi_F^{2i+1}=f\cdot : F(-id-d)\to F(-id), \\
& {_F\phi}^{2i}=f\cdot: F (-id-d)\to F(-id), & {_F\phi}^{2i+1}=\id _F: F (-id-d)\to F(-id-d).
\end{array}$$
We define
$\cF :=\{\phi_F\mid F\in \grmod S \; \textnormal{is free} \}$,
$\cG :=\{\phi_F\oplus {_G\phi} \mid F, G\in \grmod S \; \textnormal{are free} \}$, and
$\uNMF_S^{\ZZ}(f):=\NMF_S^{\ZZ}(f)/\cG$.
\end{definition} 

\begin{proposition} 
\label{prop.m2}
Let $S$ be a ring, $f\in S$ and $A=S/(f)$.  Then the functor $\Coker :\NMF_S(f)\to \mod A$ induces a fully faithful functor $\NMF_S(f)/\cF\to \mod A$.
A similar result holds in the graded case.
\end{proposition}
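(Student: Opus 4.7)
The plan is to verify two things: that the cokernel functor annihilates morphisms in $\cF(\phi,\psi)$, so that it descends to a functor on $\NMF_S(f)/\cF$; and that this induced functor is both faithful and full. The descent is immediate, since for any $\phi_F\in \cF$ one has $\phi_F^0 = \id_F$, hence $\Coker \phi_F = \Coker(\id_{\overline F}) = 0$ in $\mod A$, and therefore every morphism factoring through some $\phi_F$ induces the zero $A$-module map on cokernels.

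For faithfulness, suppose $\mu\colon \phi\to\psi$ satisfies $\Coker\mu = 0$; the goal is to factor $\mu = \alpha\beta$ with $\beta\colon\phi\to\phi_{F^0}$ and $\alpha\colon\phi_{F^0}\to\psi$. The vanishing forces $\overline{\mu^0}(\overline{F^0})\subset \Im\overline{\psi^0}$. Projectivity of $\overline{F^0}$ as an $A$-module, together with surjectivity of $\overline{\psi^0}$ onto its image, yields $\bar h\colon \overline{F^0}\to\overline{G^1}$ with $\overline{\psi^0}\bar h = \overline{\mu^0}$; a further lift along $G^1\to \overline{G^1}$ gives $h\colon F^0\to G^1$ such that $\mu^0 - \psi^0 h$ takes values in $G^0\cdot(f)$. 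The matrix factorization relation $\psi^0\psi^1 = f\cdot$ is then used to absorb this discrepancy into a correction of $h$, producing $h'\colon F^0\to G^1$ with $\mu^0 = \psi^0 h'$ exactly. Taking $F = F^0$, $\beta^0 = \id_{F^0}$, $\beta^1 = \phi^0$ (with the remaining $\beta^i$ determined accordingly), and $\alpha^0 = \mu^0$, $\alpha^1 = h'$ (extended via the matrix factorization relations), the required squares commute, establishing $\mu = \alpha\beta$.

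For fullness, given $g\colon \Coker\phi\to \Coker\psi$ in $\mod A$, projectivity of $F^0$ lifts the induced map $F^0\to \Coker\psi$ to $\mu^0\colon F^0\to G^0$. Since $\mu^0\phi^0$ then takes values in the preimage of $\Im\overline{\psi^0}$ in $G^0$, the same project-and-adjust procedure as above yields $\mu^1\colon F^1\to G^1$ with $\mu^0\phi^0 = \psi^0\mu^1$. Iterating inductively in both directions, in the manner of Theorem~\ref{thm.nmf}(4), produces a morphism $\mu\colon\phi\to\psi$ which by construction induces $g$ on cokernels.

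The main obstacle is promoting the ``commutativity modulo $(f)$'' supplied by the projective lifts to the strict commutativity required by a morphism in $\NMF_S(f)$. This uses the matrix factorization relations $\phi^i\phi^{i+1} = f\cdot = \psi^i\psi^{i+1}$ crucially, and is cleanest when $f$ is regular normal, where $G^0\cdot(f) = fG^0\subset \Im\psi^0$ makes the absorption step automatic. The graded version of the statement is handled by the same argument with only degree-bookkeeping modifications.
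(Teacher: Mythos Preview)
Your approach is correct, provided one assumes $f$ is regular normal---an assumption that the paper's own proof also uses, via Theorem~\ref{thm.nmf}(4), despite its not appearing in the proposition's hypotheses. The paper takes a shorter route by first invoking Lemma~\ref{lem.cvp}: the $A$-module $\Coker\phi=\Coker\overline{\phi^0}$ coincides with the $S$-module $\Coker\phi^0$. This allows one to work over $S$ throughout. For fullness, a map $\alpha\in\Hom_A(\Coker\phi,\Coker\psi)$ is then an $S$-module map $\Coker\phi^0\to\Coker\psi^0$, and the standard lifting along the length-one $S$-free resolutions $0\to F^1\xrightarrow{\phi^0}F^0\to\Coker\phi^0\to 0$ and $0\to G^1\xrightarrow{\psi^0}G^0\to\Coker\psi^0\to 0$ produces $\mu^0,\mu^1$ with $\mu^0\phi^0=\psi^0\mu^1$ exactly. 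For faithfulness, $\Coker\mu=0$ forces $\mu^0(F^0)\subset\Im\psi^0$ already inside $G^0$ (not merely in $\overline{G^0}$); since $\psi^0$ is injective one obtains $\mu'\colon F^0\to G^1$ with $\psi^0\mu'=\mu^0$ directly. In both halves your ``lift modulo $(f)$, then absorb the discrepancy'' detour is bypassed. What your approach buys is that the mechanism behind the absorption---namely $G^0\cdot(f)=fG^0=\psi^0\psi^1(G^0)\subset\Im\psi^0$---is made explicit, which clarifies exactly where regularity and normality of $f$ enter. The extension of $\mu^0,\mu^1$ (and of the factorization through $\phi_{F^0}$) to all indices is handled in both proofs by the existence and uniqueness parts of Theorem~\ref{thm.nmf}(4), so on that point you and the paper agree.
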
 

\begin{proof} 
Since $\Coker \phi_F=0$, the functor $\Coker :\NMF_S(f)\to \mod A$ induces a functor $\NMF_S(f)/\cF\to \mod A$. 
For any $\phi, \psi\in \NMF_S(f)$, it is enough to show that $\Coker :\Hom_{\NMF_S(f)}(\phi, \psi)\to \Hom_A(\Coker \phi, \Coker \psi)$ induces an isomorphism 
$$\Hom_{\NMF_S(f)}(\phi, \psi)/\cF(\phi, \psi)\to \Hom_A(\Coker \phi, \Coker \psi).$$

Every $\a\in \Hom_A(\Coker \phi, \Coker \psi)$ can be viewed as a right $S$-module homomorphism $\a:\Coker \phi^0\to \Coker \psi^0$, which extends to a commutative diagram 
\[\xymatrix@R=2pc@C=2.5pc{
0 \ar[r] &F^{1} \ar[d]_{\mu^1} \ar[r]^{\phi^0} &F^0 \ar[r] \ar[d]^{\mu^0} &\Coker \phi \ar[r] \ar[d]^{\a}& 0 \\
0 \ar[r] &G^{1} \ar[r]^{\psi^0} &G^0 \ar[r] &\Coker \psi \ar[r] & 0. 
}\]
By Theorem \ref{thm.nmf} (4), there exists $\mu\in \Hom_{\NMF_S(f)}(\phi, \psi)$ such that $\Coker \mu=\a$. 

If $\mu \in \cF(\phi, \psi)$ so that it factors through $\mu :\phi \to \phi_F\to \psi$ for some free right $S$-module $F$, then we have the following commutative diagram
\[\xymatrix@R=1.5pc@C=2.5pc{
0 \ar[r] &F^{1} \ar[d] \ar[r]^{\phi^0} &F^0 \ar[r] \ar[d] &\Coker \phi \ar[r] \ar[d]& 0 \\
0 \ar[r] &F \ar[d] \ar[r]^{\id_F} &F \ar[r] \ar[d] &\Coker \id_F = 0 \ar[r] \ar[d]& 0 \\
0 \ar[r] &G^{1} \ar[r]^{\psi^0} &G^0 \ar[r] &\Coker \psi \ar[r] & 0, 
}\]
so $\Coker \mu=0$.  

Conversely, if $\Coker \mu=0$ so that
\[\xymatrix@R=2pc@C=2.5pc{
0 \ar[r] &F^{1} \ar[d]_{\mu^1} \ar[r]^{\phi^0} &F^0 \ar[r] \ar[d]^{\mu^0} &\Coker \phi \ar[r] \ar[d]^{0}& 0 \\
0 \ar[r] &G^{1} \ar[r]^{\psi^0} &G^0 \ar[r] &\Coker \psi \ar[r] & 0,
}\]
then there exists a right $S$-module homomorphism $\mu':F^0\to G^1$ such that $\psi^0\mu'=\mu^0$.
Since we have a commutative diagram
\[\xymatrix@R=2pc@C=2.5pc{
0 \ar[r] &F^{1} \ar[d]_{\phi^0} \ar[r]^{\phi^0} &F^0 \ar[r] \ar[d]^{\id_{F^0}} &\Coker \phi \ar[r] \ar[d]& 0 \\
0 \ar[r] &F^0 \ar[d]_{\mu'} \ar[r]^{\id_{F^0}} &F^0 \ar[r] \ar[d]^{\mu^0} &\Coker \id_{F^0} = 0 \ar[r] \ar[d]& 0 \\
0 \ar[r] &G^{1} \ar[r]^{\psi^0} &G^0 \ar[r] &\Coker \psi \ar[r] & 0, 
}\]
there exist morphisms $\phi\to \phi_F$ and $\phi_F\to \psi$ by Theorem \ref{thm.nmf} (4) (existence).  Since $\psi^0\mu'\phi^0=\mu^0\phi^0=\psi^0\mu^1$ and $\psi^0$ is injective, $\mu'\phi^0=\mu^1$, so $\mu$ factors through $\mu :\phi \to \phi_{F^0}\to \psi$ by Theorem \ref{thm.nmf} (4) (uniqueness), hence the functor $\NMF_S(f)/\cF\to \mod A$ is fully faithful. 
\end{proof} 

The following two theorems are noncommutative graded versions of Eisenbud's matrix factorization theorem (Theorem \ref{thm.mot} (2), (3)).  

\begin{theorem} \label{thm.m3} 
Let $S$ be a graded quotient algebra of a right noetherian connected graded regular algebra, $f\in S_d$ a regular normal element, and $A=S/(f)$. 
Then the functor $\Coker:\NMF^{\ZZ}_S(f)\to \TR^{\ZZ}(A)$ induces an equivalence functor $\NMF_S^{\ZZ}(f)/\cF\to \TR^{\ZZ}_S(A)$.  
In particular, if $S$ is a noetherian AS-regular algebra, then 
$\NMF^{\ZZ}_S(f)/\cF\cong \TR_S^{\ZZ}(A)=\TR^{\ZZ}(A)=\CM^{\ZZ}(A)$.
\end{theorem}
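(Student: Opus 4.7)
The plan is to assemble the equivalence from two ingredients already on hand: essential surjectivity from \thmref{thm.m1} and full faithfulness from \propref{prop.m2}, and then read off the ``in particular'' statement from standard facts about AS-regular and AS-Gorenstein algebras.

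First I would check that $\Coker:\NMF_S^{\ZZ}(f)\to \grmod A$ actually lands in $\TR_S^{\ZZ}(A)$. For any $\phi\in\NMF_S^{\ZZ}(f)$, the homomorphism $\phi^0:F^1\to F^0$ is injective (see \remref{rem.lambda}), so $0\to F^1\xrightarrow{\phi^0} F^0\to \Coker\phi^0\to 0$ is a length-one graded free resolution over $S$, and Lemma \ref{lem.cvp} identifies $\Coker\phi^0$ with $\Coker\phi$ as graded $S$-modules. Hence $\pd_S(\Coker\phi)\leq 1<\infty$, and Proposition \ref{q.fun} already gives $\Coker\phi\in\TR^{\ZZ}(A)$, so $\Coker\phi\in\TR_S^{\ZZ}(A)$. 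Since $\Coker\phi_F=0$ for every $\phi_F\in\cF$, the functor factors through $\NMF_S^{\ZZ}(f)/\cF\to \TR_S^{\ZZ}(A)$.

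Next I would invoke \propref{prop.m2} (graded version) to conclude that the induced functor $\NMF_S^{\ZZ}(f)/\cF\to \grmod A$ is fully faithful, and hence so is its corestriction $\NMF_S^{\ZZ}(f)/\cF\to \TR_S^{\ZZ}(A)$. Essential surjectivity is exactly the content of \thmref{thm.m1}: every $M\in\TR_S^{\ZZ}(A)$ arises as $\Coker\phi$ for some $\phi\in \NMF_S^{\ZZ}(f)$ built from the length-one free resolution of $M$ over $S$ (using that $\hrank_S M=0$ forces the two free modules in the resolution to have the same rank, so the construction using the regular normal element $f$ goes through). Combining fully faithfulness and essential surjectivity yields the desired equivalence $\NMF_S^{\ZZ}(f)/\cF\cong \TR_S^{\ZZ}(A)$.

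For the ``in particular'' clause, if $S$ is a noetherian AS-regular algebra then $\gldim S<\infty$, so $\pd_S(M)<\infty$ for every $M\in\grmod A$; in particular $\TR^{\ZZ}_S(A)=\TR^{\ZZ}(A)$. Moreover, since $f\in S_d$ is a regular normal element in an AS-regular algebra, $A=S/(f)$ is noetherian AS-Gorenstein, and by the remark following Lemma \ref{lem.ev} (the graded analogue), a finitely generated graded totally reflexive module over a noetherian AS-Gorenstein algebra is exactly a graded maximal Cohen--Macaulay module, i.e.\ $\TR^{\ZZ}(A)=\CM^{\ZZ}(A)$. Chaining these identifications with the equivalence already established gives $\NMF_S^{\ZZ}(f)/\cF\cong \TR_S^{\ZZ}(A)=\TR^{\ZZ}(A)=\CM^{\ZZ}(A)$.

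No step here is genuinely hard since the heavy lifting is done by \thmref{thm.m1} and \propref{prop.m2}; the only care needed is the book-keeping that the image of $\Coker$ is contained in the subcategory $\TR_S^{\ZZ}(A)$ rather than merely in $\grmod A$, which is immediate from the length-one free resolution over $S$ produced by any matrix factorization.
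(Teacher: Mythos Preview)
Your proposal is correct and follows exactly the paper's approach: the paper's proof of this theorem is a one-line invocation of \thmref{thm.m1} (essential surjectivity and image identification) and \propref{prop.m2} (full faithfulness), which is precisely what you do, together with the standard facts about AS-regular/AS-Gorenstein algebras for the final clause. Your first paragraph, verifying that $\Coker$ lands in $\TR_S^{\ZZ}(A)$, is already contained in the proof of \thmref{thm.m1}, so you could shorten by simply citing that theorem for both the image and essential surjectivity.
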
 

\begin{proof}
It follows from Theorem \ref{thm.m1} and Proposition \ref{prop.m2}. 
\end{proof} 

Let $S$ be a graded algebra, $f\in S_d$ a regular normal element and $A=S/(f)$.
If $0\neq P\in \grmod A$ is free, then as we have seen that $P\in \TR^{\ZZ}(A)$ and $\pdim _S(P)=1$, so $P\in \TR_S^{\ZZ}(A)$.
We define 
$\cP:=\{P\in \grmod A\mid P \; \textnormal{is free}\}$, and $\uTR_S^{\ZZ}(A):=\TR_S^{\ZZ}(A)/\cP$.

The following theorem is analogous to \cite [Theorem 5.8]{CCKM} in the case of noncommutative graded hypersurfaces.

\begin{theorem}  \label{thm.m4} 
If $S$ is a graded quotient algebra of a right noetherian connected graded regular algebra, $f\in S_d$ is a regular normal element, and $A=S/(f)$, 
then the functor $\Coker:\NMF^{\ZZ}_S(f)\to \TR^{\ZZ}(A)$ induces an equivalence functor $\underline {\Coker}:\uNMF^{\ZZ}_S(f)\to \underline {\TR}_S^{\ZZ}(A)$. 
In particular, if $S$ is a noetherian AS-regular algebra, then 
$\uNMF^{\ZZ}_S(f)\cong \underline {\TR}_S^{\ZZ}(A)
=\uTR^{\ZZ}(A)= \uCM^{\ZZ}(A)$.
\end{theorem}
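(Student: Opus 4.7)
The plan is to bootstrap from Theorem \ref{thm.m3}. Since $\cF\subseteq\cG$ (take $G=0$ in the definition of $\cG$), the equivalence $\Coker:\NMF^{\ZZ}_S(f)/\cF\xrightarrow{\simeq}\TR^{\ZZ}_S(A)$ of Theorem \ref{thm.m3} automatically descends to a candidate functor $\underline{\Coker}:\uNMF^{\ZZ}_S(f)\to \underline{\TR}_S^{\ZZ}(A)$, provided one knows that $\Coker$ sends $\cG$ into $\cP$. The task then reduces to showing that the additional morphisms killed on the source side (those factoring through $\cG$) correspond under Theorem \ref{thm.m3} exactly to the morphisms killed on the target side (those factoring through $\cP$).

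Matching of objects: a direct computation gives $\Coker\phi_F=0$ and $\Coker{_G\phi}=G/fG\cong G\otimes_S A$, so $\Coker(\phi_F\oplus{_G\phi})\cong G/fG\in\cP$. Conversely, any finitely generated graded free $A$-module $P=\bigoplus_j A(-m_j)\in\cP$ equals $G/fG$ for the graded free $S$-module $G=\bigoplus_j S(-m_j)$, realising $P\cong\Coker{_G\phi}$. Together with Theorem \ref{thm.m1}, this yields well-definedness of $\underline{\Coker}$ and its essential surjectivity.

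The heart of the proof is the matching of ideals of morphisms. I claim that a morphism $\mu:\phi\to\psi$ in $\NMF^{\ZZ}_S(f)$ factors through an object of $\cG$ if and only if $\Coker\mu$ factors through an object of $\cP$. The ``only if'' direction is immediate from applying $\Coker$. For the converse, suppose $\Coker\mu=g\circ h$ through some $P\in\cP$, and write $P\cong G/fG\cong\Coker{_G\phi}$ via the realisation above. By the full faithfulness half of Theorem \ref{thm.m3}, lift $h$ and $g$ to morphisms $\widetilde h:\phi\to{_G\phi}$ and $\widetilde g:{_G\phi}\to\psi$ in $\NMF^{\ZZ}_S(f)$ with $\Coker\widetilde h=h$ and $\Coker\widetilde g=g$. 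Then $\Coker(\widetilde g\widetilde h)=g\circ h=\Coker\mu$, so by the faithfulness part of Theorem \ref{thm.m3} the difference $\mu-\widetilde g\widetilde h$ factors through some $\phi_F\in\cF\subseteq\cG$; since $\widetilde g\widetilde h$ itself factors through ${_G\phi}\in\cG$, so does $\mu$. This yields full faithfulness of $\underline{\Coker}$.

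The main technical point I anticipate is the degree-shift bookkeeping in the lift: one must check that $\widetilde h$ and $\widetilde g$ are honest morphisms of noncommutative graded matrix factorizations respecting the grading data prescribed by $G$. This really amounts to applying Theorem \ref{thm.nmf}(4) to degree-preserving lifts of $h,g$ along the short exact sequence $0\to G(-d)\xrightarrow{f\cdot}G\to G/fG\to 0$, which is a free resolution of $P$ in $\grmod S$; the argument parallels the one in Proposition \ref{prop.m2}. The concluding ``in particular'' clause is then immediate from the already-recorded identifications $\TR^{\ZZ}_S(A)=\TR^{\ZZ}(A)=\CM^{\ZZ}(A)$ (and the corresponding identification of $\cP$) when $S$ is noetherian AS-regular so that $A$ is noetherian AS-Gorenstein.
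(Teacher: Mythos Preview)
Your proof is correct and, like the paper's, bootstraps from Theorem~\ref{thm.m3}; the difference lies in how you carry out the ``matching of $\cG$ with $\cP$''. The paper proves the object-level converse: if $\Coker\phi\in\cP$ then $\phi$ is actually \emph{isomorphic} to an object of $\cG$. This is done via a short diagram-chase using the minimal free resolution $0\to F(-d)\xrightarrow{f\cdot}F\to\overline F\to 0$ to split off $_F\phi$ from $\phi$. Once the equivalence of Theorem~\ref{thm.m3} is known to restrict to an equivalence $\cG/\cF\simeq\cP$, the induced equivalence on the further factor categories follows from the general principle that equivalences descend compatibly to factor categories by matching ideals.

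You instead work directly at the level of morphism ideals: you need only the trivial direction at the object level (every $P\in\cP$ is $\Coker{_G\phi}$ for the obvious $G$), and then you lift a factorization $\Coker\mu=g\circ h$ through $P$ to a factorization of $\mu$ through $_G\phi\oplus\phi_F$ by invoking full faithfulness of $\NMF_S^{\ZZ}(f)/\cF\to\TR_S^{\ZZ}(A)$ twice. This bypasses the paper's splitting argument entirely. What the paper's route buys is the stronger structural statement that $\phi\in\cG$ (up to isomorphism) is \emph{characterized} by $\Coker\phi$ being free; what your route buys is that you never need that characterization, only the easy surjectivity onto~$\cP$. Both are clean, and your anticipated ``technical point'' about degree-shift bookkeeping is indeed handled exactly as you say, by Theorem~\ref{thm.nmf}(4) applied to degree-preserving lifts along $0\to G(-d)\to G\to P\to 0$.
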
   

\begin{proof} For every $\phi_F\oplus {_G\phi}\in \cG$, $\Coker (\phi_F\oplus{_G\phi})=\overline G\in \cP$. 
On the other hand, suppose that $\Coker \phi=\overline F\in \cP$ where $F\in \grmod S$ is free. 
Since 
$$\begin{CD} 0 @>>> F(-d) @>f\cdot >> F @>>> \overline F @>>>0\end{CD}$$
is the minimal free resolution of $\overline F$ in $\grmod S$, we have a commutative diagram
\[\xymatrix@R=1.5pc@C=2.5pc{
&0 \ar[d]&0 \ar[d] &0 \ar[d]\\
0 \ar[r] &F(-d) \ar[d] \ar[r]^{f\cdot} &F \ar[r] \ar[d] &\overline F  \ar[r] \ar[d]& 0 \\
0 \ar[r] &F^1 \ar[d] \ar[r]^{\phi^0} &F^0 \ar[r] \ar[d] &\Coker \phi \ar[r] \ar[d]& 0 \\
0 \ar[r] &G^1 \ar[d] \ar[r]^{\psi^0} &G^0 \ar[r] \ar[d] &0 \ar[r] \ar[d]& 0\\
&0 &0 &0
}\]
where vertical sequences are split exact.  Since $\psi^0:G^1\to G^0$ is an isomorphism, $\phi\cong {_F\phi}\oplus \phi_{G^0}\in \cG$.  It follows that the equivalence functor $\NMF_S^{\ZZ}(f)/\cF\to \TR^{\ZZ}_S(A)$ of Theorem \ref{thm.m3} restricts to a bijection from  $\cG/\cF$ to $\cP$, so 
it induces an equivalence functor $\underline {\Coker}:\uNMF^{\ZZ}_S(f)\to \underline {\TR}_S^{\ZZ}(A)$.  
\end{proof} 

\begin{remark}
For $\phi\in \NMF_S(f)$, we define $\phi[1]\in \NMF_S(f)$ by $\phi[1]^i=\phi^{i+1}$ for every $i\in \ZZ$.
Note that $\phi_F[1]\cong {_F\phi}$ and ${_F\phi}[1]\cong \phi_F$.  
In the above theorem, if $S$ is a noetherian AS-regular algebra, then $\uTR^{\ZZ}(A)=\uCM^{\ZZ}(A)$ has a structure of a triangulated category with the translation functor $\Omega ^{-1}$,
so $\uNMF_S^{\ZZ}(f)$ has a structure of a triangulated category with the translation functor $[-1]$ (i.e., $\{\phi^i\}_{i \in \ZZ}\mapsto \{\phi^{i-1}\}_{i \in \ZZ}$). 
\end{remark}

\section{An Application to Skew Exterior Algebras} 

In this last section, we will apply our theory of noncommutative matrix factorizations to skew exterior algebras,
which are hardly noncommutative hypersurfaces.
An application to noncommutative quadric hypersurfaces will be discussed in our subsequent paper \cite{MU2}.  
Throughout this section, {\bf we assume that every graded algebra is finitely generated in degree 1 over $k$.}  

\subsection{Co-point Modules} 

\begin{definition} 
Let $A$ be a graded algebra and $M\in \grmod A$.  
\begin{enumerate}
\item{} We say that $M$ is a \emph{point module} over $A$ if $M=M_0A$ and $H_M(t)=1/(1-t)$.  
\item{} We say that $M$ is a \emph{co-point module} over $A$ if $M$ has a linear free resolution of the form 
\[ \cdots \to A(-3) \to A(-2)\to A(-1)\to A\to M\to 0.\] 
\end{enumerate}
\end{definition} 

\begin{definition}
Let $A$ be a graded algebra.  We say that $M\in \grmod A$ is an \emph{$r$-extension of (shifted) point modules} if $M$ has a filtration 
$$M=M_0\supset M_1\supset \cdots \supset M_r=0$$
such that $M_i/M_{i+1}\in \grmod A$ is a (shifted) point module for every $i=0, \dots, r-1$. 

We can also similarly define an \emph{$r$-extension of (shifted) co-point modules}.
\end{definition}

Write $A=k\<x_1, \dots, x_n\>/I$ where $I$ is a homogeneous ideal of $k\<x_1, \dots, x_n\>$ such that $I_0=I_1=0$ (and $\deg x_i=1$ for all $i=1, \dots, n$ as always assumed).
For a point $p=(a_1, \dots, a_n)\in \PP^{n-1}$, we define $N_p:=A/(a_1x_1+\cdots +a_nx_n)A\in \grmod A$.
Note that, for $p, q\in \PP^{n-1}$, $N_p\cong N_q$ if and only if $p=q$.  If $N\in \grmod A$ is a co-point module, then $N\cong N_p$ for some $p\in \PP^{n-1}$.
Let $E:=\{p\in \PP^{n-1}\mid N_p \textnormal {\; is a co-point module over $A$}\}$.  If $N$ is a co-point module, then the first syzygy module $\Omega N(1)$ is also a co-point module, so there is a map $\t:E\to E$ defined by $\Omega N_p(1)\cong N_{\t(p)}$.
The pair $\cP^!(A):=(E, \t)$ is called the \emph{co-geometric pair} of $A$ (see \cite {Mck}).

We denote by $\lin A$ the full subcategory of $\grmod A$ consisting of modules having linear free resolutions.  
We say that $A$ is \emph{Koszul} if $k = A/A_{\geq 1} \in \lin A$.
For $M\in \grmod A$, we define $E(M):=\bigoplus _{i\in \NN}\Ext^i_A(M, k)$.  If $A$ is a Koszul algebra, then $A^!:=E(k)^o=\bigoplus _{i\in \NN}\Ext^i_A(k, k)^o$ is also a Koszul algebra, and $E$ induces a duality functor $E:\lin A\to \lin A^!$.  

\begin{lemma}[{\cite [Theorem 3.4]{Mck}}]
If $A$ is a Koszul algebra, then the duality functor $E:\lin A\to \lin A^!$ induces a bijection from the set of isomorphism classes of co-point modules over $A$ to the set of isomorphism classes of point modules having linear free resolutions over $A^!$.   
\end{lemma}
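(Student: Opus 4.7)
The plan is to exploit the fact that, because $A$ is Koszul, the duality functor $E:\lin A\to \lin A^!$ is a contravariant equivalence of categories, with quasi-inverse given again by $E$ via the identification $(A^!)^!\cong A$. In particular $E$ already induces a bijection between the sets of isomorphism classes of $\lin A$ and $\lin A^!$, so the content of the lemma is merely to identify which isomorphism classes correspond under $E$. The key numerical input is that, for any $M\in \lin A$, the internal degree $i$ piece $(E(M))_i = \Ext^i_A(M,k)$ has $k$-dimension equal to the $i$-th Betti number $\beta_i(M)$ of the (necessarily linear) minimal free resolution of $M$, so that
\[
H_{E(M)}(t) \;=\; \sum_{i\ge 0} \beta_i(M)\,t^i.
\]

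First I would verify the forward direction. If $N$ is a co-point module over $A$, then by definition its minimal free resolution is $\cdots\to A(-i)\to\cdots\to A(-1)\to A\to N\to 0$, so $\beta_i(N)=1$ for every $i\ge 0$. Hence $H_{E(N)}(t)=\sum_{i\ge 0}t^i = 1/(1-t)$. Since $E(N)\in \lin A^!$, it has a linear resolution, and in particular it is generated in degree $0$, so $E(N)_0\cdot A^! = E(N)$. Combined with the Hilbert series computation this says $E(N)$ is a point module over $A^!$ that has a linear resolution.

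Conversely, let $P$ be a point module over $A^!$ having a linear resolution, so $H_P(t)=1/(1-t)$ and $P\in\lin A^!$. Set $N:=E(P)\in \lin A$, using $E:\lin A^!\to \lin A$. By Koszul self-duality $E\circ E \cong \id$ (with the identification $(A^!)^!=A$), we have $E(N)\cong P$, so applying the Hilbert series formula above to $N$ gives
\[
\sum_{i\ge 0}\beta_i(N)\,t^i \;=\; H_{E(N)}(t) \;=\; H_P(t) \;=\; \frac{1}{1-t},
\]
which forces $\beta_i(N)=1$ for every $i\ge 0$. Therefore the minimal free resolution of $N$ has exactly the shape $\cdots\to A(-i)\to\cdots\to A(-1)\to A\to N\to 0$, i.e.\ $N$ is a co-point module over $A$.

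Combining these two directions with the fact that $E$ is already an equivalence (hence a bijection on isomorphism classes of the ambient categories) yields the desired bijection. The main obstacle is merely bookkeeping: one has to fix precise conventions for the Koszul duality functor $E$ (the match between homological and internal degree, so that $\Ext^i_A(M,k)$ really sits in degree $i$ of $E(M)$) so that the identity $H_{E(M)}(t)=\sum_i\beta_i(M)t^i$ and the involutivity $E\circ E\cong \id$ are unambiguous; once these are in place, the argument is just the elementary Hilbert series comparison above.
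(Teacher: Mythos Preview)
The paper does not supply its own proof of this lemma: it is stated with a citation to \cite[Theorem 3.4]{Mck} and no argument is given in the present paper. So there is nothing to compare your proposal against here.

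That said, your argument is the natural one and is essentially how the cited result is obtained. The only point worth tightening is the grading convention you flag at the end: in the paper's setup $E(M)=\bigoplus_i\Ext^i_A(M,k)$, and for $M\in\lin A$ with minimal linear resolution $\cdots\to A(-i)^{\beta_i}\to\cdots$ one has $\Ext^i_A(M,k)\cong k(i)^{\beta_i}$ (concentrated in internal degree $-i$), so the identification $(E(M))_i\cong k^{\beta_i}$ requires the standard Koszul regrading that places $\Ext^i$ in degree $i$. Once that is fixed, your Hilbert series identity $H_{E(M)}(t)=\sum_i\beta_i(M)t^i$ is exactly right, and the two directions (co-point $\Rightarrow$ all $\beta_i=1$ $\Rightarrow$ $H_{E(N)}=1/(1-t)$, and conversely) go through as you wrote.
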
 

For a co-point module $N_p\in \lin A$ where $p\in E$, we denote by $M_p:=E(N_p)\in \lin A^!$ the corresponding point module.  

\begin{definition} A noetherian $n$-dimensional AS-regular algebra $A$ is called a \emph{quantum polynomial algebra} if 
\begin{enumerate}
\item{} $H_A(t)=1/(1-t)^n$, and  
\item{} $j(M)+\GKdim M=\GKdim A \;(=n)$ for every $M\in \grmod A$ where $j(M):=\inf\{i\in \NN\mid \Ext^i_A(M, A)\neq 0\}$. 
\end{enumerate}
\end{definition} 

\begin{example} \label{ex.qpa} 
\begin{enumerate}
\item{} A (commutative) polynomial algebra is obviously a quantum polynomial algebra.  
\item{} Every noetherian 3-dimensional quadratic AS-regular algebra is a quantum polynomial algebra
(see \cite[Corollary 6.2]{L}). 
\item {} The skew polynomial algebra $T=k\<x_1, \dots, x_n\>/(\a_{ij}x_ix_j+x_jx_i)_{1\leq i,  j\leq n, i\neq j}$ where $\a_{ij}\in k$ such that $\a_{ij}\a_{ji}=1$ for every $1\leq i, j\leq n, i\neq j$ 
is a quantum polynomial algebra by \cite[Lemma (\rnum{2}) on page 184]{LS}.  
\end{enumerate}
\end{example} 

\begin{remark} Every co-point module has a linear free resolution by definition, but not every point module has a linear free resolution even if $A$ is Koszul.  However, if $A^!$ is a quantum polynomial algebra,   
then $A$ and $A^!$ are Koszul, and every point module over $A^!$ has a linear free resolution by \cite [Corollary 5.7]{Mck}, so the duality functor $E:\lin A\to \lin A^!$ induces a bijection from the set of isomorphism classes of co-point modules over $A$ to the set of isomorphism classes of point modules over $A^!$.  
\end{remark} 

\begin{lemma} \label{lem.liex} 
The following categories are closed under extensions. 
\begin{enumerate}
\item{} $\lin A$ for a 
graded algebra $A$.  
\item{} $\TR^{\ZZ}(A)$ for a 
graded algebra $A$. 
\item{} $\TR^{\ZZ}_S(A)$ for a graded algebra $S$, $f\in S_d$, and $A=S/(f)$.
\end{enumerate}
\end{lemma}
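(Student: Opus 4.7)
My plan is to handle the three parts by standard long-exact-sequence manipulations; none of them should require a new idea.

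For (1), I would pass to a cohomological characterization: a module $M$ generated in degree $d$ lies in $\lin A$ if and only if $\Tor^A_i(M, k)$ is concentrated in internal degree $d+i$ for every $i \geq 0$. Given a short exact sequence $0 \to L \to M \to N \to 0$ with $L, N \in \lin A$ generated in the same degree $d$, the module $M$ is then generated in degree $d$, and the long exact sequence for $\Tor^A(-, k)$ sandwiches $\Tor^A_i(M, k)_j$ between two zeros whenever $j \neq d+i$. Hence $M \in \lin A$.

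For (2), I would verify the three defining properties of a totally reflexive module in turn on $0 \to L \to M \to N \to 0$ with $L, N \in \TR^{\ZZ}(A)$. Applying $(-)^\ast := \Hom_A(-, A)$ and using $\Ext^i_A(L, A) = \Ext^i_A(N, A) = 0$ for $i \geq 1$ produces $\Ext^i_A(M, A) = 0$ for $i \geq 1$ together with a short exact sequence $0 \to N^\ast \to M^\ast \to L^\ast \to 0$ in $\grmod A^o$. Repeating the argument with $\Hom_{A^o}(-, A)$ applied to this dualized sequence gives $\Ext^i_{A^o}(M^\ast, A) = 0$ for $i \geq 1$ and a short exact sequence $0 \to L^{\ast\ast} \to M^{\ast\ast} \to N^{\ast\ast} \to 0$ in $\grmod A$. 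Finally, the natural biduality maps assemble into a ladder between these two short exact sequences; the five lemma promotes the isomorphisms for $L$ and $N$ to an isomorphism for $M$.

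For (3), the statement reduces to (2) together with the observation that finite $S$-projective dimension is preserved under extensions: the long exact sequence of $\Ext^\ast_S(-, -)$ applied to $0 \to L \to M \to N \to 0$ yields $\pd_S(M) \leq \max(\pd_S(L), \pd_S(N))$, which is finite by hypothesis. Combined with $M \in \TR^{\ZZ}(A)$ from (2), this places $M$ in $\TR^{\ZZ}_S(A)$.

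I do not anticipate a serious obstacle; each argument is one application of a long exact sequence, augmented by the five lemma in part (2). The only subtlety is in part (1), where the statement implicitly requires $L$ and $N$ to be generated in the same degree; otherwise an extension of linear modules need not be generated in a single degree at all, and one would have to allow $\lin A$ to mean ``a direct sum of modules with linear resolutions'' for the closure to survive.
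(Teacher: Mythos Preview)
The paper's own proof is simply ``Left to the reader,'' so there is nothing substantive to compare against; your arguments are the standard ones and are correct. Your observation about part (1) is well taken: as stated, $\lin A$ is only closed under extensions of modules generated in the \emph{same} degree, and this is indeed how the lemma is applied later in the paper (e.g.\ in Proposition~\ref{prop.fpm}, where all modules involved are generated in degree $0$).
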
 

\begin{proof} Left to the reader. 
\end{proof} 

Let $S$ be a graded algebra, $f\in S_d$ a regular normal element, and $A=S/(f)$.   We define 
\begin{align*}
 \NMF_S^0(f)&:= \{\{\phi^i:F^{i+1}\to F^i\}_{i\in \ZZ}\in \NMF^{\ZZ}_S(f)\mid F^0\;\textnormal{is generated in degree 0} \; (\textnormal{i.e.,}\; F^0\cong S^r )\}, \\
 \TR^0(A)&:=\{M\in \TR^{\ZZ}(A)\mid M=M_0A\}, \\
 \TR^0_S(A)&:=\{M\in \TR^{\ZZ}_S(A)\mid M=M_0A\}.
\end{align*}  

\begin{proposition} \label{prop.lin} 
Let $S$ be a graded quotient algebra of a right noetherian connected graded regular algebra, $f\in S_2$ a regular normal element, and $A=S/(f)$. 
\begin{enumerate}
\item{} $\TR^0_S(A)=\TR^{\ZZ}_S(A)\cap \lin A$.  In particular, $\TR^0_S(A)$ is closed under extensions.  
\item{} If $\phi\in \NMF^0_S(f)$ is of rank 1 such that $\Coker \phi$ has no free summand, then $\Coker \phi$ is a co-point module. 
\end{enumerate}
\end{proposition}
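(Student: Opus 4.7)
The plan is to prove (1) by combining Proposition \ref{prop.mfr} with a degree count, and to deduce (2) by specializing the same count to rank one.

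For (1), the containment $\TR^{\ZZ}_S(A)\cap \lin A \subseteq \TR^0_S(A)$ is immediate from the definition of a linear resolution, which forces generation in degree $0$. For the reverse containment, let $M\in \TR^0_S(A)$ and decompose $M = M'\oplus P$ with $P$ free and $M'$ having no nonzero free summand; since a free module trivially admits a linear resolution, it suffices to treat $M'$. By Proposition \ref{prop.mfr}, the minimal free resolution of $M'$ over $A$ is $C(\phi)^{\geq 0}$ for some $\phi \in \NMF^{\ZZ}_S(f)$ of rank $r$; write $F^i = \bigoplus_s S(-m_{is})$. Minimality together with $M'= M'_0 A$ forces $m_{0s}=0$, so $F^0\cong S^r$. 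Since every column of $\Phi^0$ lies in a single degree, the relation $\Phi^0\Phi^1 = fE_r$ with $\deg f = 2$ and minimality (entries in $S_{\geq 1}$) pins down $m_{1t}=1$ and $m_{2u}=2$ for all $t,u$; iterating via $\Phi^i\Phi^{i+1}=fE_r$ gives $m_{is}=i$ for all $i,s$, so $C(\phi)^{\geq 0}$ is a linear resolution of $M'$. The closure of $\TR^0_S(A)$ under extensions then follows from the resulting equality $\TR^0_S(A) = \TR^{\ZZ}_S(A)\cap \lin A$ and Lemma \ref{lem.liex}.

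For (2), in the rank-one case $F^i = S(-m_i)$ and each $\phi^i$ is left multiplication by a homogeneous element $a_i\in S$. The condition $\phi\in \NMF^0_S(f)$ gives $m_0=0$, while $a_0a_1=f$ gives $\deg a_0 + \deg a_1 = 2$. The hypothesis that $\Coker\phi$ has no free summand (in particular, $\Coker\phi$ is nonzero and not free) rules out $\deg a_0=0$ (which would give $\Coker\phi=0$) and $\deg a_0=2$ (which would give $a_0\in k^{\times}\!\cdot f$ and hence $\Coker\phi\cong A$); hence $\deg a_0=1$. Writing $a_0=\sum_{j=1}^n \alpha_j x_j$ in terms of degree-$1$ generators $x_1,\dots,x_n$ of $A$ (using $A_1=S_1$), we identify $\Coker\phi = A/a_0 A = N_p$ for $p=(\alpha_1,\dots,\alpha_n)\in \PP^{n-1}$. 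The same degree count as in (1), applied at rank one, shows that the resolution $\cdots\to A(-2)\to A(-1)\to A\to \Coker\phi \to 0$ is linear, so $\Coker\phi$ is a co-point module.

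The main obstacle I anticipate is the column-by-column degree bookkeeping in part (1): confirming that in a minimal rank-$r$ matrix factorization of a degree-$2$ element, every $F^i$ is forced to be concentrated in a single twist $S(-i)^r$ rather than a mixture of twists. The key observation is that each nonzero contribution to a diagonal entry of $fE_r$ has the form $\Phi^i_{st}\Phi^{i+1}_{tu}$ with combined degree $m_{i+2,u}-m_{is}$, so the fixed value $\deg f=2$ together with the minimality bound of $1$ per factor pins down the column degrees of $\Phi^i$ and $\Phi^{i+1}$ uniformly at $1$.
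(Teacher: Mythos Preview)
Your approach matches the paper's: strip off free summands, invoke Proposition~\ref{prop.mfr} to realize the minimal resolution as $C(\phi)^{\ge0}$, and then argue on degrees that it is linear; part~(2) is then the rank-one specialization, exactly as in the paper.

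One refinement to your degree bookkeeping (the point you yourself flagged as the main obstacle): your ``key observation'' about a nonzero diagonal contribution $\Phi^i_{st}\Phi^{i+1}_{ts}$ to $fE_r$ only shows that for each fixed $s$ there exists \emph{some} $t$ with both factors nonzero and hence $m_{1t}=1$; it does not by itself force $m_{1t}=1$ for every $t$. The clean way to close this (implicit in the paper as well) is to first record $F^{i+2}\cong F^i(-2)$ directly from the NMF definition, so $m_{2u}=2$ for all $u$, and then observe that each \emph{row} $t$ of $\Phi^1$ is nonzero, since a zero row would make the $(t,t)$ entry of $\Phi^1\Phi^2=fE_r$ vanish. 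Minimality then gives $2-m_{1t}\ge1$, and combined with your lower bound $m_{1t}\ge1$ (from injectivity of $\phi^0$ and minimality applied to column $t$ of $\Phi^0$) this yields $m_{1t}=1$ for all $t$. With that in place your argument is complete.
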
 

\begin{proof}
(1) If $M\in \TR^0_S(A)$ is free, then $M\in \TR^{\ZZ}_S(A)\cap \lin A$.  If $M\in \TR^0_S(A)$ has no free summand, then there exists $\phi=\{\phi^i:F^{i+1}\to F^i\}_{i\in \ZZ}\in \NMF^{\ZZ}_S(f)$ such that $C (\phi)^{\geq 0}$ is the minimal free resolution of $M$ by Proposition \ref{prop.mfr}.
Since $\deg f=2$, we have $F^{i+2}\cong F^i(-2)$ for every $i\in \ZZ$.
Since $M=M_0A$, we see $\overline {F^0}\cong A^r$, so $C (\phi)^{\geq 0}$ is a linear free resolution of $M$, hence $M\in \TR^{\ZZ}_S(A)\cap \lin A$.  The converse is clear. 

(2) If $\phi=\{\phi^i:F^{i+1}\to F^i\}_{i\in \ZZ}\in \NMF^0_S(f)$ is of rank 1 such that 
$\Coker \phi$ has no free summand, then $C(\phi)^{\geq 0}$ is the linear free resolution of $\Coker \phi\in \TR^0_S(A)$ by (1).  
Since $\overline {F^i}\cong A(-i)$,
$\Coker \phi$ is a co-point module.   
\end{proof}

\begin{example}
Since the skew polynomial algebra $T=k\<x_1, \dots, x_n\>/(\a_{ij}x_ix_j+x_jx_i)$
is a quantum polynomial algebra, 
$S:=T/(x_1^2, \dots, x_{n-1}^2)$ is a graded quotient algebra of a noetherian AS-regular algebra.
In this case, $f=x_n^2\in S_2$ is a regular normal element, $A:=S/(f)$ is a skew exterior algebra, and $A^!\cong k\<x_1, \dots, x_n\>/(x_ix_j-\a_{ij}x_jx_i)$ is again a skew polynomial algebra. 
\end{example} 

\begin{theorem} \label{thm.copo}
Let $T=k\<x_1, \dots, x_n\>/(\a_{ij}x_ix_j+x_jx_i)$ be a skew polynomial algebra, and put $S:=T/(x_1^2, \dots, x_{n-1}^2)$.
Let $f=x_n^2\in S_2$ so that $A=S/(f)$ is a skew exterior algebra, and let $\cP^!(A)=(E, \t)$.  
\begin{enumerate}
\item{} For $p\in E$, $N_p\in \TR^0_S(A)$ if and only if $p\not\in \cV(x_n)$.  
\item{} If $M\in \grmod A$ is an $r$-extension of co-point modules $N_{p_i}$ for $p_i\in E\setminus \cV(x_n)$, then there exists $\phi\in \NMF^0_S(f)$ of rank $r$ such that $M\cong \Coker \phi$.  
\end{enumerate}
\end{theorem}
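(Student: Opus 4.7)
My plan is to prove (1) by an explicit construction/obstruction argument that builds a rank-one matrix factorization when $a_n\neq 0$ and rules one out when $a_n=0$, and then to deduce (2) from (1) together with the extension-closure of $\TR^0_S(A)$ and Theorem \ref{thm.m1}.

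For the direction $p\notin\cV(x_n)\Rightarrow N_p\in \TR^0_S(A)$ in (1), I will write $\ell=\sum_{i=1}^n a_i x_i$ and seek $m=\sum_{i=1}^n b_i x_i\in S_1$ with $\ell m=f$ in $S$. A direct expansion, using $x_i^2=0$ for $i<n$ and $x_jx_i=-\alpha_{ij}x_ix_j$, gives
\[ \ell m=a_nb_n\, x_n^2+\sum_{i<j}(a_ib_j-\alpha_{ij}a_jb_i)\,x_ix_j \]
in $S$. Since $p\in E$, the linear resolution of $N_p$ furnishes a nonzero first-syzygy element $\sum b_i x_i\in A_1$ satisfying $a_ib_j=\alpha_{ij}a_jb_i$ for all $i<j$; when $a_n\neq 0$ the constraints at $j=n$ force $b_n\neq 0$, so rescaling to $a_nb_n=1$ yields $\ell m=f$ exactly. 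I will then take $\phi^0=\ell\cdot$ and $\phi^1=m\cdot$, invoke Theorem \ref{thm.nmf}(2) to extend to $\phi\in \NMF_S^0(f)$ of rank one with $\Coker\phi\cong N_p$, and conclude $N_p\in \TR^{\ZZ}_S(A)\cap\lin A=\TR^0_S(A)$ by Theorem \ref{thm.m1} and Proposition \ref{prop.lin}(1).

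For the converse direction of (1), I will assume $a_n=0$ and $N_p\in \TR^0_S(A)$ and aim for a contradiction. Lemma \ref{lem.pd} forces $\pd_S(N_p)=1$; since $N_p=S/(\ell S+fS)$ is cyclic with $\hrank_S N_p=0$, a minimal graded free resolution must have the shape $0\to S(-m)\xrightarrow{g\cdot}S\to N_p\to 0$, and a degree count forces $\deg g=1$, so $g$ is a nonzero scalar multiple of $\ell$. Hence $f\in \ell S$, i.e., $f=\ell h$ for some $h\in S_1$. Expanding $\ell h$ in $S_2$ using $a_n=0$ shows that the $x_n^2$-coefficient of $\ell h$ is zero, contradicting $f=x_n^2$.

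Part (2) will then follow formally. By (1), each $N_{p_i}\in \TR^0_S(A)$; since $\TR^0_S(A)$ is closed under extensions (Proposition \ref{prop.lin}(1)), induction on $r$ gives $M\in \TR^0_S(A)$. The given filtration yields $\dim_k M_0=r$, so $M$ admits a minimal surjection $\e:S^r\to M$ in $\grmod S$, and feeding this $\e$ into the essential-surjectivity construction in the proof of Theorem \ref{thm.m1} produces the desired $\phi\in \NMF_S^0(f)$ of rank $r$ with $\Coker\phi\cong M$. The main technical obstacle will be in the forward direction of (1), where the construction of $m$ hinges on extracting the hidden compatibility $\alpha_{in}=\alpha_{ij}\alpha_{jn}$ (for $i<j<n$ with $a_i,a_j\neq 0$) from the mere existence of a nonzero degree-two generator of the first syzygy of $N_p$ over $A$.
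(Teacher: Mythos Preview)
Your argument is largely sound and in places more elementary than the paper's, but there is one hypothesis you have not verified. To invoke Theorem~\ref{thm.nmf}(2) with $\phi^0=\ell\cdot$ and $\phi^1=m\cdot$, you need $\ell\cdot:S(-1)\to S$ to be injective. This is not automatic: $S$ has zero divisors, and $\ell m=f$ only gives injectivity of $m\cdot$, not of $\ell\cdot$. The fix is short: since $f=\ell m\in\ell S$ you have $N_p=S/(\ell S+fS)=S/\ell S$, and comparing Hilbert series (using $H_{N_p}(t)=(1+t)^{n-1}$ from the linear $A$-resolution and $H_S(t)=(1+t)^{n-1}/(1-t)$) gives $H_{\ell S}(t)=tH_S(t)$, so $\ker(\ell\cdot)=0$. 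With this in place, your forward direction of (1), your contradiction argument for the converse, and your treatment of (2) all go through.

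Your route genuinely differs from the paper's. The paper tracks the \emph{entire} linear resolution of $N_p$ via the co-geometric pair: writing $\tau^i(p)=(a_{i1},\dots,a_{in})$ and $\Phi^i=\sum_j a_{ij}x_j$, it computes that consecutive products equal $a_{i+1,n}a_{in}f$ in $S$, and then invokes the external fact \cite[Theorem~4.1]{Ue} that $\tau$ preserves $E\setminus\cV(x_n)$, so $a_n\neq 0$ forces $a_{in}\neq 0$ for \emph{all} $i$ and the whole sequence $\{\Phi^i\}$ becomes a matrix factorization by Remark~\ref{rem.lambda}(4); the converse is then read off from uniqueness of the minimal linear resolution. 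Your approach trades the citation of \cite{Ue} for the injectivity check above, and your converse via $\pd_S(N_p)=1\Rightarrow f\in\ell S$ is more direct. The paper's approach, on the other hand, makes the link with the dynamics of $\tau$ explicit, which is what gets reused later in Theorem~\ref{thm.last}. Finally, your closing worry about ``extracting the hidden compatibility $\alpha_{in}=\alpha_{ij}\alpha_{jn}$'' is misplaced: the nonzero syzygy element $m$ with $\ell m=0$ in $A$ is \emph{given} by the hypothesis $p\in E$, so the relations $a_ib_j=\alpha_{ij}a_jb_i$ come for free and need not be derived.
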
 

\begin{proof} (1) Recall that $\ell_{ij}=\cV(x_1, \dots, x_{i-1}, x_{i+1}, \dots, x_{j-1}, x_{j+1}, \dots, x_n)
\subset E$ and $\t(\ell_{ij})=\ell_{ij}$ for every $ 1\leq i<j\leq n$ by \cite[Theorem 4.1]{Ue}, so
 $E\setminus \cV(x_n)\neq \emptyset$.  For $p\in E$ and $i\in \ZZ$, let $\t^i(p)=(a_{i1}, \dots, a_{in})\in E\subset \PP^{n-1}$.  Then $p\in E\setminus \cV(x_n)$ if and only if $\t^i(p)\in E\setminus \cV(x_n)$ for every $i\in \ZZ$ if and only if $a_{in}\neq 0$ for every $i\in \ZZ$.  Since 
$$\begin{CD} \cdots @>{(\sum_{j=1}^na_{2j}x_j)\cdot }>>A(-2) @>{(\sum_{j=1}^na_{1j}x_j)\cdot }>> A(-1) @>{(\sum_{j=1}^na_{0j}x_j)\cdot }>> A @>>> N_p @>>> 0\end{CD}$$
is a linear free resolution of $N_p$ over $A$, we have
$(\sum_{j=1}^na_{i+1, j}x_j)(\sum_{j=1}^na_{ij}x_j)=0$ in $A$.  It follows that $(\sum_{j=1}^na_{i+1, j}x_j)(\sum_{j=1}^na_{ij}x_j)$ is a linear combination of $\{\a_{ij}x_ix_j+x_jx_i\}_{1\leq i<j\leq n}\cup \{x_i^2\}_{1\leq i\leq n}$ in $k\<x_1, \dots, x_n\>$, so $(\sum_{j=1}^na_{i+1, j}x_j)(\sum_{j=1}^na_{ij}x_j)=a_{i+1, n}a_{in}x_n^2=a_{i+1, n}a_{in}f$ in $S$ for every $i\in \ZZ$.  Let $\Phi^i:=\sum_{j=1}^na_{ij}x_j$ for $i\in \ZZ$.  By the uniqueness of the linear free resolution, $N_p\in \TR^0_S(A)$ if and only if $\Phi$ induces $\phi\in \NMF^0_S(f)$ such that $\Coker \phi\cong N_p$
if and only if $a_{in}\neq 0$ for every $i\in \ZZ$ if and only if $p\not \in \cV(x_n)$.  

(2) Since $N_{p_i}\in \TR^0_S(A)$ for every $p_i\in E\setminus \cV(x_n)$ by (1), $M\in \TR^0_S(f)$ by Proposition \ref{prop.lin} (1).  
\end{proof} 

\subsection{Indecomposable Noncommutative Matrix Factorizations} 

Let $A$ be a right noetherian graded algebra.
We denote by $\tors A$ the full subcategory of $\grmod A$ consisting of finite dimensional modules over $k$, and by $\tails A:=\grmod A/\tors A$ the Serre quotient category.  Note that $\Obj(\tails A)=\Obj(\grmod A)$, and, for $M, N\in \Obj(\tails A)=\Obj(\grmod A)$, $M\cong N$ in $\tails A$ if and only if $M_{\geq n}\cong N_{\geq n}$ in $\grmod A$ for some $n\in \ZZ$.  
We call $\tails A$ the \emph{noncommutative projective scheme} associated to $A$ since if $A$ is commutative (and finitely generated in degree 1 as always assumed), then $\tails A$ is equivalent to the category of coherent sheaves on $\Proj A$.
Let $\Tors A$ be the full subcategory of $\GrMod A$ consisting of direct limits of modules in $\tors A$, and $\Tails A:=\GrMod A/\Tors A$.
It is known that the quotient functor $\pi :\GrMod A\to \Tails A$ is exact and has a right adjoint $\omega :\Tails A\to \GrMod A$. See \cite{AZ} for details.

\begin{proposition} \label{prop.fpm}
Let $A$ be a quantum polynomial algebra. 
If $M\in \grmod A$ is an $r$-extension of shifted point modules, then the following hold: 
\begin{enumerate}
\item{} $\R^1\omega\pi M=0$. 
\item{} $\dim _k(\omega \pi M)_n=r$ for every $n\in \ZZ$. 
\item{} $(\omega \pi M)(n)_{\geq 0}\in \lin A$ for every $n\in \ZZ$.
\end{enumerate}
\end{proposition}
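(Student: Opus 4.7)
The plan is to induct on $r$. For the base case $r=1$, shift invariance of the three statements lets us reduce to $M=N$ being an unshifted point module. Since $\GKdim N=1$, the Auslander condition $j(N)+\GKdim N=n$ in the definition of a quantum polynomial algebra yields $j(N)=n-1$. Combined with local duality for the AS-regular (hence AS-Gorenstein) algebra $A$ and the fact that point modules over a quantum polynomial algebra are pure (so $\H^0_{\fm}(N)=0$), one obtains $\H^i_{\fm}(N)=0$ for $i\neq 1$. Hence the four-term local cohomology sequence contracts to $0\to N\to \omega\pi N\to \H^1_{\fm}(N)\to 0$, and $\R^1\omega\pi N\cong \H^2_{\fm}(N)=0$, proving (1).

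For (2), Euler characteristic applied to the (Koszul) linear resolution of $N$ with Betti numbers $\binom{n-1}{i}$ (forced by $H_N(t)/H_A(t)=(1-t)^{n-1}$) yields $H_{\Ext^{n-1}_A(N,A)}(t)=t^{-(n-1)}/(1-t)$. Local duality with Gorenstein parameter $\ell=n$ then gives $H_{\H^1_{\fm}(N)}(t)=\sum_{j\geq 1} t^{-j}$, and adding $H_N(t)=\sum_{j\geq 0}t^j$ produces $H_{\omega\pi N}(t)=\sum_{j\in\ZZ}t^j$; this proves $\dim_k(\omega\pi N)_n=1$ for every $n$. For (3), $(\omega\pi N)(k)_{\geq 0}$ has Hilbert series $1/(1-t)$; cyclicity (generation in degree~$0$) follows because $\omega\pi N$ is $\fm$-torsion-free, so the multiplication $(\omega\pi N)_k\otimes A_1\to (\omega\pi N)_{k+1}$ between one-dimensional pieces is nonzero hence surjective, and one iterates. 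Thus $(\omega\pi N)(k)_{\geq 0}$ is a point module, which over the (Koszul) quantum polynomial algebra $A$ admits a linear resolution via the Koszul-duality framework of \cite{Mck}.

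For the inductive step, let $M_1$ be the second term of the filtration, so $M_1$ is an $(r-1)$-extension and $M/M_1$ is a shifted point module. Applying the exact functor $\pi$ to $0\to M_1\to M\to M/M_1\to 0$ and then the long exact sequence of $\R^i\omega$, vanishing of $\R^1\omega\pi M_1$ (induction) and $\R^1\omega\pi(M/M_1)$ (base case) yields $\R^1\omega\pi M=0$, giving (1); the long exact sequence then collapses to a short exact sequence $0\to \omega\pi M_1\to \omega\pi M\to \omega\pi(M/M_1)\to 0$ in $\grmod A$, and additivity of Hilbert series gives (2). For (3), shifting by $n$ and truncating at $\geq 0$ preserves this short exact sequence (since the surjection is degree-wise), and the outer terms lie in $\lin A$ by induction and base case; \lemref{lem.liex}(1) then places the middle term in $\lin A$.

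The principal obstacle lies in the base case of~(3): verifying both that $(\omega\pi N)(k)_{\geq 0}$ is a point module for \emph{every} $k\in\ZZ$---for negative $k$ this requires the torsion-freeness argument above---and that all shifted point modules over a general quantum polynomial algebra admit linear resolutions. The latter rests on Koszulity of $A$ together with the co-point/point module correspondence from \cite{Mck}.
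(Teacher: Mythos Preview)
Your proof is correct and follows the same inductive strategy as the paper: the inductive step---using the long exact sequence for $\R^i\omega\pi$ together with \lemref{lem.liex}(1)---is identical. The only difference is in the base case $r=1$: where the paper simply cites \cite[Corollary 5.7]{Mck} and \cite[Lemma 6.5, Proposition 6.6]{Mcfk} for (2) and (3), you give a self-contained argument via local duality, Hilbert series of $\Ext^{n-1}_A(N,A)$, and the torsion-freeness of $\omega\pi N$, which is a legitimate (and more explicit) alternative to invoking those references.
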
 

\begin{proof}
By induction on $r$.  For the case $r=1$, it is known that if $M\in \grmod A$ is a shifted point module, then $\R^1\omega \pi M=\H^2_{\fm}(M)=0$.
By \cite [Corollary 5.6]{Mck}, $M\in \lin A$, so (2) and (3) follows from \cite [Lemma 6.5, Proposition 6.6]{Mcfk}. 

For $r\geq 1$, if $M\in \grmod A$ has a filtration 
$$M=M_0\supset M_{1}\supset \cdots \supset M_r=0$$
such that $M_i/M_{i+1}\in \grmod A$ is a shifted point module for every $i=0, \dots, r-1$,  
then $M_1$ is an $(r-1)$-extension of shifted point modules, so 
we have $\R^1\omega \pi M_1=0$, $\dim _k(\omega \pi M_1)_n=r-1$, and $(\omega \pi M_1)(n)_{\geq 0}\in \lin A$ for every $n\in \ZZ$ by induction.
An exact sequence $0\to M_1\to M\to M/M_1\to 0$ induces an exact sequence 
$$0\to \omega \pi M_1\to \omega \pi M\to \omega \pi (M/M_1)\to \R^1\omega \pi M_1\to \R^1\omega \pi M\to \R^1\omega \pi (M/M_1).$$
Since $M/M_1\in \grmod A$ is a shifted point module, $\R^1\omega \pi (M/M_1)=0$, so $\R^1\omega \pi M=0$ and we have an exact sequence
$$0\to (\omega \pi M_1)(n)_{\geq 0}\to (\omega \pi M)(n)_{\geq 0}\to (\omega \pi (M/M_1))(n)_{\geq 0}\to 0.$$
Since $\dim _k(\omega \pi M_1)_n=r-1$ and $\dim _k(\omega \pi (M/M_1))_n=1$, we have $\dim _k(\omega \pi M)_n=r$ for every $n\in \ZZ$.  
Since $(\omega \pi M_1)(n)_{\geq 0}, (\omega \pi (M/M_1))(n)_{\geq 0}\in \lin A$, we have $(\omega \pi M)(n)_{\geq 0}\in \lin A$ by Lemma \ref{lem.liex}.
\end{proof} 

 Let $A$ be a Koszul algebra.  Then $A$ is the polynomial algebra in $n$ variables if and only if $A^!$ is the exterior algebra in $n$ variables if and only if $\cP^!(A^!)=(\PP^{n-1}, \id)$.  The following lemma is well-known. 

\begin{lemma} \label{lem.qpa} 
If $A$ is an $n$-dimensional quantum polynomial algebra such that $\cP^!(A^!)=(\PP^{n-1}, \t)$, then there exists an equivalence functor $\GrMod A\to \GrMod k[x_1, \dots, x_n]$ which induces a bijection from the set of isomorphism classes of point modules over $A$ to the set of isomorphism classes of point modules over $k[x_1, \dots, x_n]$.  
\end{lemma}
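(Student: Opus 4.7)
The strategy is to realize $A$ as a Zhang twist of the commutative polynomial algebra $S := k[x_1,\ldots,x_n]$ and then apply Theorem~\ref{thm.Z}.

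First I would use Koszul duality to translate the hypothesis into a statement about $A$ itself. Since $A$ is an $n$-dimensional quantum polynomial algebra, $A$ is Koszul, hence so is $A^!$. By the remark preceding Lemma~\ref{lem.liex}, every point module over $A$ admits a linear resolution, so the swapped version of \cite[Theorem 3.4]{Mck} (applied with $A^!$ in place of $A$, using $(A^!)^! = A$) gives a bijection between co-point modules over $A^!$ and point modules over $A$. Consequently, the hypothesis $\cP^!(A^!) = (\PP^{n-1}, \tau)$ says that the point scheme of $A$ is all of $\PP^{n-1}$, together with some induced automorphism $\sigma \in \Aut(\PP^{n-1}) = \PGL_n(k)$.

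Next I would show that $A$ is a Zhang twist of $S$. Choose a lift $\widetilde{\sigma} \in \GL_n(k) = \Aut_k(S_1)$ of $\sigma$, and extend it linearly to a graded $k$-algebra automorphism of $S$. A classification result for $n$-dimensional quantum polynomial algebras whose point scheme is the full $\PP^{n-1}$ --- from the "geometric algebra" framework of Mori and collaborators --- then yields a graded algebra isomorphism $A \cong S^{\widetilde{\sigma}}$. Combining this with Lemma~\ref{lem.gra} and Theorem~\ref{thm.Z}, the Zhang twist functor provides an equivalence of categories
\[
(-)^{\widetilde{\sigma}^{-1}} \colon \GrMod A \;\cong\; \GrMod S^{\widetilde{\sigma}} \;\longrightarrow\; \GrMod (S^{\widetilde{\sigma}})^{\widetilde{\sigma}^{-1}} = \GrMod S.
\]

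Finally I would check that this equivalence sends point modules to point modules. Because the twist functor is the identity on the underlying graded $k$-vector space, it preserves Hilbert series (so the condition $H_M(t) = 1/(1-t)$ is invariant), and the identity $(M_0 A)^{\widetilde{\sigma}^{-1}} = M_0 S$ shows that being generated in degree $0$ is preserved. Thus the equivalence restricts to a bijection between isomorphism classes of point modules on each side.

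The main obstacle is the second step: justifying that an $n$-dimensional quantum polynomial algebra whose point scheme is $(\PP^{n-1}, \sigma)$ must be a Zhang twist of the commutative polynomial algebra. This is the nontrivial structural input; the author flags the lemma as well-known precisely because it is presumably settled by appealing to existing results on geometric algebras rather than a direct argument.
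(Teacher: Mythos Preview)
The paper gives no proof of this lemma; it is stated as ``well-known'' and left without argument. So there is nothing to compare against, and your proposal should be read as a reconstruction of the standard argument the authors presumably have in mind.

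Your outline is sound. The reduction in step 1 is correct: Remark~\ref{rem.imp} is not what you want, but the remark just before Lemma~\ref{lem.liex} (the one invoking \cite[Corollary 5.7]{Mck}) applied with the roles of $A$ and $A^!$ swapped does give that co-point modules over $A^!$ biject with point modules over $A$, so the hypothesis indeed says the point scheme of $A$ is $(\PP^{n-1},\sigma)$. Step 3 is also fine: a Zhang twist is the identity on underlying graded vector spaces, so both defining conditions of a point module are preserved.

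Your honest flag on step 2 is exactly right, and it is the whole content of the lemma. The relevant input is that a quantum polynomial algebra is quadratic (Koszul), and a quadratic algebra whose point scheme is all of $\PP(V^*)$ with automorphism $\sigma\in\PGL(V)$ has its relation space $R\subset V\otimes V$ forced: $R$ must cut out the graph of $\sigma$, and since $\dim R=\binom{n}{2}$ this pins $R$ down as the relations of the twist $S^{\widetilde\sigma}$. This is the ``geometric algebra'' yoga; the references behind it are \cite{Mck} (for the framework of geometric pairs and co-point modules) together with \cite{Zh} (for the equivalence $\GrMod S^{\widetilde\sigma}\cong\GrMod S$). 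One small caveat: you assert $\sigma\in\Aut(\PP^{n-1})=\PGL_n(k)$, but a priori $\tau$ is only a set map on $E$; that it is a scheme automorphism (hence linear) uses that $A$ is AS-regular, which guarantees the point-scheme map is invertible and algebraic.
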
 

The following lemma may be standard in commutative algebra.
For the reader's convenience, we include our proof.  

\begin{lemma} \label{lem.inde} 
Let $A=k[x_1, \dots, x_n]$ and $M=A/(x_1^r, x_2, \dots, x_{n-1})\in \grmod A$ for $r\in \NN^+$.  For every $m\in \NN$, $M_{\geq m}$ is indecomposable as a graded $A$-module. 
\end{lemma}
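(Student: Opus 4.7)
My plan is to reduce to a two-variable problem and then exploit the fact that in a suitable ``stable range'' the graded pieces of $M_{\geq m}$ carry a single nilpotent Jordan block, whose invariant subspaces form a chain. Since the ideal $(x_1^r, x_2, \ldots, x_{n-1})$ contains $x_2, \ldots, x_{n-1}$, these variables annihilate $M$, so the $A$-action on $M_{\geq m}$ factors through $B := A/(x_2, \ldots, x_{n-1}) \cong k[x_1, x_n]$. Any graded $A$-module decomposition is therefore a graded $B$-module decomposition, so it suffices to prove the claim over $B$. Writing $y := x_1$ and $z := x_n$, one has $M = B/(y^r)$ with $k$-basis $\{y^i z^j : 0 \leq i < r,\ j \geq 0\}$, and $M_{\geq m}$ is spanned by those monomials with $i+j \geq m$.

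Suppose for contradiction that $M_{\geq m} = N \oplus N'$ as graded $B$-modules with $N, N' \neq 0$. Since $z$ is a non-zero-divisor on $B/(y^r)$, it acts injectively on each summand, so once a summand is nonzero in some degree it is nonzero in every higher degree. In particular there exists $d^* \geq \max(m, r-1)$ for which both $N_{d^*}$ and $N'_{d^*}$ are nonzero. In the stable range $d \geq \max(m, r-1)$, the graded piece $(M_{\geq m})_d$ has $k$-basis $e_i := y^i z^{d-i}$ for $0 \leq i \leq r-1$, hence dimension $r$; moreover $z \cdot : (M_{\geq m})_d \to (M_{\geq m})_{d+1}$ is a $k$-linear isomorphism. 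Injectivity of $z$ together with the constancy of dimensions forces $zN_d = N_{d+1}$ and $zN'_d = N'_{d+1}$, so $yN_d \subseteq N_{d+1} = zN_d$ shows that $T := z^{-1}y$ defines an endomorphism of $(M_{\geq m})_d$ preserving both $N_d$ and $N'_d$.

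A direct check gives $T(e_i) = e_{i+1}$ for $i < r-1$ and $T(e_{r-1}) = 0$, so $T$ is a single nilpotent Jordan block of size $r$. Its $T$-invariant subspaces are exactly the chain $V_k := \operatorname{span}(e_k, \ldots, e_{r-1})$ for $0 \leq k \leq r$, which is totally ordered by inclusion. Consequently two nonzero $T$-invariant subspaces can never be complementary, contradicting the decomposition $N_{d^*} \oplus N'_{d^*} = (M_{\geq m})_{d^*}$, and proving indecomposability. The only genuine obstacle is identifying the stable range and recognizing the Jordan-block shape of $T = z^{-1}y$ there; once this is in place, the linearly-ordered structure of invariant subspaces of a single Jordan block closes the argument immediately.
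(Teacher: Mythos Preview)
Your proof is correct. Both you and the paper begin by reducing to the two-variable case and passing to a degree in the stable range $d\geq r-1$. From there the arguments diverge. The paper's approach is a direct computation: for any nonzero $u\in M_m$ it exhibits a specific monomial multiplier sending $u$ to (a nonzero scalar times) $x_1^{r-1}x_n^{m}$, so this fixed element lies in \emph{every} nonzero cyclic submodule generated in degree $m$; since $M_{\geq m}=M_mA$, any two nonzero summands would then meet nontrivially. Your approach instead linearizes the problem: after noting that $z$ is bijective between consecutive stable degrees, you introduce the single operator $T=z^{-1}y$ on the $r$-dimensional space $(M_{\geq m})_{d^*}$, check that it is a nilpotent Jordan block, and invoke the classical fact that the invariant subspaces of such a block form a chain, so no two nonzero ones can be complementary. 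The paper's argument is a touch more elementary (one explicit polynomial multiplication, no appeal to Jordan theory), while yours is more structural and explains \emph{why} the argument works: the module in each stable degree is governed by a single indecomposable nilpotent operator. The two are closely related---your $T$-invariant chain $V_k=\operatorname{span}(e_k,\dots,e_{r-1})$ has common smallest nonzero member $\operatorname{span}(e_{r-1})$, which is exactly the line through the paper's distinguished element $x_1^{r-1}x_n^{d^*-r+1}$.
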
 

\begin{proof} If $A'=A/(x_2, \dots, x_{n-1})\cong k[x_1, x_n]$, then $M$ can be viewed as $M'=A'/(x_1^r)$ as a graded $A'$-module.  If $M_{\geq m}$ is decomposable as a graded $A$-module, then $M'_{\geq m}$ is decomposable as a graded $A'$-module, so we may assume that $n=2$.  To simplify the notation, let $A=k[x, y]$ and $M=A/(x^r)$.  If $M_{\geq m}$ is decomposable, then $M_{\geq m'}$ is decomposable for every $m'\geq m$, so we may assume that $m\geq r-1$.  In this case, since $M_m$ is spanned by $x^{r-1}y^{m-r+1}, x^{r-2}y^{m-r+2}, \dots, xy^{m-1}, y^m$ over $k$, every $0\neq u\in M_m$ can be written as  $u=\sum _{i=1}^ja_ix^{r-i}y^{m-r+i}\in M_m$ for some $1\leq j\leq r$ such that $a_j\neq 0$ (and $a_{j+1}=\cdots =a_r=0$).  For $x^{j-1}y^{r-j}\in A_{r-1}$, since $r-i+j-1\geq r$ for every $i\leq j-1$, 
$$ux^{j-1}y^{r-j}=\sum _{i=1}^ja_ix^{r-i+j-1}y^{m+i-j}=a_jx^{r-1}y^m\in M_{m+r-1}.$$
It follows that $x^{r-1}y^m\in uA$ for every $0\neq u\in M_m$.  Suppose that there exists a nontrivial decomposition $M_{\geq m}=L\oplus N$ of $M$.  Since $M_{\geq m}=M_mA$, we see that $L_m\neq 0, N_m\neq 0$, so there must exist $0\neq u\in L_m\subset M_m, 0\neq v\in N_m\subset M_m$ such that $uA\cap vA=0$.  Since $x^{r-1}y^m\in uA\cap vA$ by the above argument, we have a contradiction, so $M_{\geq m}$ is indecomposable. 
\end{proof} 

\begin{lemma} \label{lem.pinf}
Let $A$ be an $n$-dimensional quantum polynomial algebra such that $\cP^!(A^!)=(\PP^{n-1}, \t)$.  For every $r\in \NN^+$ and every $p\in \PP^{n-1}$, there exist an indecomposable $r$-extension of $M_p$. 
\end{lemma}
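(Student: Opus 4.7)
The plan is to apply Lemma~\ref{lem.qpa} to reduce to the commutative polynomial algebra $A' = k[x_1, \ldots, x_n]$, and then exhibit an explicit indecomposable $r$-extension using a truncation of the module already considered in Lemma~\ref{lem.inde}.

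Under the hypothesis $\cP^!(A^!) = (\PP^{n-1}, \t)$, Lemma~\ref{lem.qpa} provides an equivalence $F\colon \GrMod A \to \GrMod A'$ that induces a bijection on isomorphism classes of point modules. Thus, for the given point module $M_p$ over $A$, $F(M_p)$ is isomorphic to a point module $M_{p'}$ over $A'$ for some $p' \in \PP^{n-1}$. Any equivalence of abelian categories preserves direct sum decompositions, short exact sequences, and filtrations with prescribed subquotients; and for the twisting-type equivalence underlying Lemma~\ref{lem.qpa}, grading shifts are preserved as well. So it suffices to produce an indecomposable $r$-extension of $M_{p'}$ over $A'$: applying $F^{-1}$ then yields an indecomposable $r$-extension of $M_p$ over $A$.

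For $A'$ and a given $p' \in \PP^{n-1}$, a linear change of coordinates (a graded $k$-algebra automorphism of $A'$) lets us assume $p' = (0{:}\cdots{:}0{:}1)$ and hence $M_{p'} \cong k[x_n] = A'/(x_1, \dots, x_{n-1})$. Set $M := A'/(x_1^r, x_2, \dots, x_{n-1})$ and $N := M_{\geq r-1}(r-1)$. The elements $e_i := x_1^i x_n^{r-1-i}$ for $i = 0, 1, \dots, r-1$ form a basis of $N_0$, and a direct check gives $x_j e_i = 0$ for $j = 2, \dots, n-1$, $x_1 e_{r-1} = 0$, and $x_1 e_i = x_n e_{i+1}$ for $0 \leq i \leq r-2$ (the last identity since $x_1^{i+1} x_n^{r-1-i} = x_n \cdot x_1^{i+1} x_n^{r-2-i}$). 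The submodules $N_i := \langle e_{r-1}, e_{r-2}, \dots, e_{r-i} \rangle_{A'}$ then form a filtration $0 = N_0 \subset N_1 \subset \cdots \subset N_r = N$ with $N_{i+1}/N_i \cong k[x_n] = M_{p'}$, so $N$ is an $r$-extension of $M_{p'}$. By Lemma~\ref{lem.inde}, $M_{\geq r-1}$ is indecomposable, hence its shift $N$ is indecomposable as well.

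The main obstacle is verifying that the equivalence $F$ of Lemma~\ref{lem.qpa} commutes with grading shifts and respects the isomorphism class of the specific point module $M_p$, so that the filtration on $N$ transports to a filtration of $F^{-1}(N)$ whose subquotients are genuinely isomorphic to $M_p$ (rather than merely to shifted copies of $M_p$). This should follow from the twisting nature of $F$, but the bookkeeping of the identifications warrants care.
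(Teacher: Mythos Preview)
Your proof is correct and shares the paper's opening move (reduce via Lemma~\ref{lem.qpa} to $A' = k[x_1,\dots,x_n]$ and consider $M = A'/(x_1^r, x_2, \dots, x_{n-1})$), but the construction of the indecomposable $r$-extension is different and more elementary. The paper forms $(\omega\pi M)_{\geq 0}$ via the saturation functor, invoking Proposition~\ref{prop.fpm} and \cite[Proposition~6.6]{Mcfk} to identify the filtration subquotients with $M_p$, and deduces indecomposability from $(\omega\pi M)_{\geq m} \cong M_{\geq m}$ for $m \gg 0$; you instead take $N = M_{\geq r-1}(r-1)$, exhibit the filtration by hand, and get indecomposability directly from Lemma~\ref{lem.inde} at $m = r-1$. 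Your route avoids the projective-scheme machinery entirely, while the paper's fits more naturally with the surrounding results on $\lin A$. (In fact the two modules are isomorphic, via multiplication by $x_n^{r-1}$ on $\omega\pi M$, though neither argument needs this.) Your closing concern is unnecessary: since every subquotient in your filtration is $M_{p'}$ itself with no shift involved, and Lemma~\ref{lem.qpa} already gives $F^{-1}(M_{p'}) \cong M_p$, the transport through $F^{-1}$ requires only that $F$ be an equivalence of abelian categories---no compatibility with the shift functor enters. One cosmetic point: you use the symbol $N_0$ both for the degree-zero piece of $N$ and for the bottom of the filtration; distinct notation would be cleaner.
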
 

\begin{proof} 
By Lemma \ref{lem.qpa}, we may assume that $A=k[x_1, \dots, x_n]$.  
Without loss of generality, we may also assume that $p=(0, \dots, 0, 1)\in \PP^{n-1}$ so that $M_p=A/(x_1, \dots, x_{n-1})$.  Then $M:=A/(x_1^r, x_2, \dots, x_{n-1})$ is an $r$-extension of shifted point modules having the filtration 
$$M=M_0\supset M_1\supset \cdots \supset M_r=0$$
such that $M_i/M_{i+1}\cong M_p(-i)$ for every $i=0, \dots, r-1$.  Since $\omega \pi :\GrMod A\to \GrMod A$ is a left exact functor, we have a filtration 
$$(\omega \pi M)_{\geq 0}=(\omega \pi M_0)_{\geq 0}\supset (\omega\pi M_1)_{\geq 0}\supset \cdots \supset (\omega \pi M_r)_{\geq 0}=0.$$
Since $M_i$ is an $(r-i)$-extension of shifted point modules, we have
$$(\omega \pi M_i)_{\geq 0}/(\omega \pi M_{i+1})_{\geq 0}\cong (\omega \pi (M_i/M_{i+1}))_{\geq 0}\cong (\omega \pi M_p(-i))_{\geq 0}\cong M_p$$
for every $i=0, \dots, r-1$ by the proof of Proposition \ref{prop.fpm} and \cite[Proposition 6.6]{Mcfk}, so $(\omega \pi M)_{\geq 0}$ is an $r$-extension of $M_p$.   

If $(\omega \pi M)_{\geq 0}$ is decomposable, then 
$M_{\geq m}\cong (\omega\pi M)_{\geq m}$ is decomposable for some $m\gg 0$, which is not the case by Lemma \ref{lem.inde}, so $(\omega \pi M)_{\geq 0}$ is indecomposable.  
\end{proof} 

\begin{theorem} \label{thm.nmfext}
Let $T=k\<x_1, \dots, x_n\>/(\a_{ij}x_ix_j+x_jx_i)$ be a skew polynomial algebra, and put $S=T/(x_1^2, \dots, x_{n-1}^2)$.
Let $f=x_n^2\in S_2$ so that $A=S/(f)$ is a skew exterior algebra. Suppose that
$\a_{ij}\a_{jk}\a_{ki}=1$ for all $1\leq i, j ,k\leq n$ (e.g. $A$ is the exterior algebra). 
For each $r\in \NN^+$, there exist infinitely many isomorphism classes of indecomposable $\phi\in \NMF^0_S(f)$ of rank $r$.
\end{theorem}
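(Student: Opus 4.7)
The plan is to produce, for each fixed $r\in\NN^+$, an infinite family of pairwise non-isomorphic indecomposable rank-$r$ objects in $\TR^0_S(A)$ and transport them back to noncommutative matrix factorizations via Theorems~\ref{thm.copo}(2) and~\ref{thm.m3}. Throughout I assume $k$ infinite, which is necessary for the conclusion: if $k$ were finite, then there would be only finitely many $r\times r$ matrices with entries in $S_1$.

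First I identify the co-point scheme. Under the Yang-Baxter type hypothesis $\a_{ij}\a_{jk}\a_{ki}=1$, the Koszul dual $A^!\cong k\<x_1,\dots,x_n\>/(x_ix_j-\a_{ij}x_jx_i)$ is a twist of the commutative polynomial algebra $k[x_1,\dots,x_n]$ by a graded automorphism; hence $A^!$ is an $n$-dimensional quantum polynomial algebra with point scheme all of $\PP^{n-1}$. Combined with the Koszul duality lemma identifying co-point modules of $A$ with point modules of $A^!$ having linear resolutions (every such point module has a linear resolution by the remark following Example~\ref{ex.qpa}), this yields $\cP^!(A)=(\PP^{n-1},\t)$ for some automorphism $\t$. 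In particular $E\setminus\cV(x_n)$ contains infinitely many $k$-points.

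Second I construct indecomposable extensions. For each $p\in\PP^{n-1}\setminus\cV(x_n)$, Lemma~\ref{lem.pinf} applied to $A^!$ furnishes an indecomposable $r$-extension $M_p^{(r)}\in\grmod A^!$ of shifted copies of the point module $M_p$. The contravariant Koszul duality functor $E:\lin A^!\to\lin A$ is exact and satisfies $E(M_p)\cong N_p$, so $N_p^{(r)}:=E(M_p^{(r)})\in\lin A$ is an $r$-extension of (shifted) copies of $N_p$ (with filtration direction reversed) and remains indecomposable because $E$ is a duality. Since $p\notin\cV(x_n)$, Theorem~\ref{thm.copo}(2) produces $\phi_p\in\NMF^0_S(f)$ of rank $r$ with $\Coker\phi_p\cong N_p^{(r)}$.

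Finally I verify indecomposability and pairwise non-isomorphism of the $\phi_p$. By Theorem~\ref{thm.m3}, $\Coker$ induces an equivalence $\NMF^{\ZZ}_S(f)/\cF\cong\TR^{\ZZ}_S(A)$, so indecomposability of $N_p^{(r)}$ gives indecomposability of $\phi_p$ in the factor category; to lift this to $\NMF^0_S(f)$ itself, I use Proposition~\ref{prop.mfr} together with the fact that $N_p^{(r)}$ has no free summand, so that $C(\phi_p)^{\geq 0}$ is already the minimal free resolution and no nontrivial $\phi_F$ can split off. For distinct $p\neq q$, $N_p^{(r)}\not\cong N_q^{(r)}$ because the co-point module appearing as a filtration quotient is $N_p$ versus $N_q$ and $N_p\not\cong N_q$; hence $\phi_p\not\cong\phi_q$. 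The main obstacle will be the careful bookkeeping for Koszul duality---confirming that $E$ preserves the $r$-extension structure and indecomposability, and that the hypothesis $\a_{ij}\a_{jk}\a_{ki}=1$ is exactly what makes the point scheme of $A^!$ equal to all of $\PP^{n-1}$ so that Lemma~\ref{lem.pinf} is available at every $p\in E\setminus\cV(x_n)$; ruling out trivial $\phi_F$ summands in $\NMF^0_S(f)$ itself (as opposed to only modulo $\cF$) is a secondary technical point.
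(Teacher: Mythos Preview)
Your proposal is correct and follows essentially the same route as the paper: identify $\cP^!(A)=(\PP^{n-1},\t)$ via the cyclic condition on the $\a_{ij}$, invoke Lemma~\ref{lem.pinf} on $A^!$ to get indecomposable $r$-extensions of $M_p$, push through Koszul duality to get indecomposable $r$-extensions of $N_p$, and then use Theorem~\ref{thm.copo} to land in $\NMF^0_S(f)$. The paper argues pairwise non-isomorphism by reducing to the commutative polynomial ring via Lemma~\ref{lem.qpa} (where support is an obvious invariant), whereas you invoke the filtration quotients directly; these amount to the same thing, and your explicit remark that $k$ must be infinite is a point the paper leaves implicit.
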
 

\begin{proof}
By \cite[Theorem 4.1]{Ue}, if $\a_{ij}\a_{jk}\a_{ki}=1$ for all $1\leq i, j ,k\leq n$, then $A^!$ is a skew polynomial algebra such that $\cP^!(A)=(\PP^{n-1}, \t)$,
so $A^!$ is an $n$-dimensional quantum polynomial algebra and there exists an indecomposable $r$-extension $M\in \lin A^!$ of the point module $M_p$ over $A^!$ for each $p\in \PP^{n-1}\setminus \cV(x_n)$ by Lemma \ref{lem.pinf}.
Since $E(M)\in \lin A$ is an indecomposable $r$-extension of a co-point module $N_p$ over $A$ where $p\in \PP^{n-1}\setminus \cV(x_n)$,
we have $E(M)\in \TR^0_S(A)$ by Theorem \ref{thm.copo},
so there exists an indecomposable $\phi\in \NMF^0_S(f)$ of rank $r$ such that $\Coker \phi\cong E(M)$.   

For $p, p'\in \PP^{n-1}\setminus \cV(x_n)$, let $M, M'$ be $r$-extensions of $M_p, M_{p'}$, respectively. 
Reducing to the commutative case by Lemma \ref{lem.qpa}, if $p\neq p'$, then 
$M\not \cong M'$, so there exist infinitely many isomorphism classes of indecomposable $\phi\in \NMF^0_S(f)$ of rank $r$.
\end{proof} 

\begin{remark} Let $A$ be a Koszul algebra.  Since the functor $E:\lin A\to \lin A^!$ is a duality, $0\to L\to M\to N\to 0$ is an exact sequence in $\lin A$ if and only if $0\to E(N)\to E(M)\to E(L)\to 0$ is an exact sequence in $\lin A^!$. In fact, the exact sequence $0\to L\to M\to N\to 0$ induces an exact sequence 
$$\begin{array}{cccccccccccc} \Ext^{i-1}_A(L, k)_{-i} & \to & \Ext^i_A(N, k)_{-i} & \to & \Ext^i_A(M, k)_{-i}& \to & \Ext^i_A(L, k)_{-i} & \to & \Ext^{i+1}_A(N, k)_{-i} \\
\parallel & & \parallel & & \parallel & & \parallel & & \parallel \\
0 & &  E(N)_i & & E(M)_i & & E(L)_i & & 0 \end{array}$$ 
for every $i\in \ZZ$.  It follows that $N\in \lin A$ is an (indecomposable) extension of co-point modules $N_p$ if and only if $E(N)$ is an (indecomposable) extension of point modules $M_p$. 
\end{remark} 

\subsection{Extensions of Co-point Modules}

\begin{proposition} \label{prop.extp} 
Let $A$ be an $n$-dimensional quantum polynomial algebra such that $\cP^!(A^!)=(E, \t)$.  Suppose that either 
\begin{enumerate}
\item{} $E=\PP^{n-1}$, or 
\item{} $n=3$ and $||\t||:=\inf\{i\in \NN^+\mid \t^i\in \Aut \PP^{n-1}\}=\infty$.
\end{enumerate}
For $M\in \grmod A$, if $(\omega \pi M)_{\geq 0}\cong M$ and  $H_M(t)=r/(1-t)$, then $M$ is an $r$-extension of point modules. 
\end{proposition}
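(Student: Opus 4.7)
The plan is to prove this by induction on $r$.

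For the base case $r=1$, the hypothesis $(\omega\pi M)_{\geq 0}\cong M$ forces $M$ to have no nonzero finite-dimensional submodules (any such would lie in the kernel of the canonical map $M\to \omega\pi M$). Pick any $0\neq m\in M_0$. I claim $mA=M$: otherwise, since $\dim_k M_i=1$ for all $i\geq 0$, there is a minimal $k\geq 1$ with $(mA)_k=0$, in which case $M_{k-1}=(mA)_{k-1}=mA_{k-1}$ and so $M_{k-1}\cdot A_1\subseteq mA_k=0$ (using that $A$ is generated in degree $1$). Any $0\neq y\in M_{k-1}$ then spans a one-dimensional graded submodule of $M$, contradicting torsion-freeness. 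Hence $M=mA$ is cyclic of Hilbert series $1/(1-t)$, that is, a point module.

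For the inductive step ($r\geq 2$), I aim to locate a point-module submodule $N_{p_1}\hookrightarrow M$ whose quotient inherits the saturation hypothesis. Since $A$ is noetherian and $\pi M$ has constant Hilbert polynomial $r$, it is a finite-length object of $\tails A$ and admits a simple subobject $S\subseteq \pi M$. The crux is to identify $S$ with $\pi N_{p_1}$ for some (unshifted) point module. In case (1), Lemma 7.17 yields an equivalence $\GrMod A\cong \GrMod k[x_1,\dots,x_n]$ under which $\tails A\cong \coh \PP^{n-1}$; the simple subobjects of a zero-dimensional coherent sheaf are skyscraper sheaves $k_p$, which correspond to point modules. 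In case (2), I invoke the classification of simple objects in $\tails A$ for $3$-dimensional AS-regular algebras with $||\t||=\infty$ (work of Artin--Tate--Van den Bergh and successors): simples of zero GK-dimension are images of shifted point modules. The shift ambiguity is absorbed by the observation that $(N_p)_{\geq d}\cong N_p(-d)$ as graded modules, so $\pi N_p(-d)\cong \pi N_p$ in $\tails A$; hence $S\cong \pi N_{p_1}$ for an unshifted $N_{p_1}$.

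Once $S\cong \pi N_{p_1}$ is identified, saturation of a point module gives $\omega S\cong N_{p_1}$. Applying the left-exact $\omega$ to $S\subseteq \pi M$ and truncating to degrees $\geq 0$ produces $N_{p_1}\hookrightarrow (\omega\pi M)_{\geq 0}=M$. Set $M':=M/N_{p_1}$, so $H_{M'}(t)=(r-1)/(1-t)$. Applying $\omega\pi$ to $0\to N_{p_1}\to M\to M'\to 0$ and using $\R^1\omega\pi N_{p_1}=0$ (recorded in the proof of Proposition 7.16 for point modules), I get $\omega\pi M'\cong (\omega\pi M)/N_{p_1}$, whose degree-$\geq 0$ part is $M/N_{p_1}=M'$. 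Thus $M'$ again satisfies the saturation hypothesis; by induction it is an $(r-1)$-extension of point modules, and lifting the resulting filtration through $M\to M'$ and appending $M_{r-1}=N_{p_1}\supset M_r=0$ exhibits $M$ as an $r$-extension of point modules.

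The main obstacle is the identification of simple subobjects of $\pi M$ with (unshifted) point modules in case (2): this requires nontrivial structural input on $3$-dimensional AS-regular algebras with $||\t||=\infty$, and the shift-invariance of point modules in $\tails A$ must be invoked carefully so that the submodule actually lands as an \emph{unshifted} point module of $M$. A secondary technical point is the vanishing $\R^1\omega\pi N_{p_1}=0$, without which the saturation hypothesis would not propagate through the short exact sequence defining $M'$.
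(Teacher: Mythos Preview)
Your argument is correct and follows essentially the same strategy as the paper: induction on $r$, locating a point-module subobject via a simple subobject of $\pi M$ in $\tails A$, and using $\R^1\omega\pi M_p=0$ to propagate the saturation hypothesis to the quotient. The paper handles both hypotheses (1) and (2) uniformly by citing \cite[Lemma~3.5, Proposition~4.4]{Mrb} for the statement that the simple objects of $\tails_0 A$ are exactly the $\pi M_p$ for $p\in E$, whereas you split into the commutative reduction (via Lemma~\ref{lem.qpa}) in case~(1) and an appeal to the Artin--Tate--Van den Bergh style classification in case~(2); these come to the same thing. Your base case is a more elementary cyclic-module argument than the paper's, which simply reuses the simple-subobject step for $r=1$ as well; both are fine. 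One small imprecision: the identity $(N_p)_{\geq d}\cong N_p(-d)$ is not literally true in general---the point changes under $\tau$, so one only gets $(M_p)_{\geq d}\cong M_{\tau^d(p)}(-d)$---but your conclusion that any shifted point module becomes isomorphic in $\tails A$ to an unshifted one (for a possibly different point) is correct, which is all that is needed.
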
 

\begin{proof} 
The assumption on $A$ implies that 
the isomorphism classes of simple objects in $\tails _0A:=\{\pi M\in \tails A\mid \GKdim M\leq 1\}$ are given by $\{\pi M_p\}_{p\in E}$ by \cite [Lemma 3.5, Proposition 4.4]{Mrb}.  We prove it by induction on $r$.  

Suppose that $(\omega \pi M)_{\geq 0}\cong M$ and  $H_M(t)=1/(1-t)$.  Since $0\neq \pi M\in \tails _0A$, there exists $p\in E$ such that $\pi M_p\subset \pi M$.  Since $\omega :\Tails A\to \GrMod A$ is a left exact functor, $M_p\cong (\omega \pi M_p)_{\geq 0}\subset (\omega \pi M)_{\geq 0}\cong M$.  Since $H_M(t)=1/(1-t)=H_{M_p}(t)$, it follows that $M\cong M_p$ is a point module.  

Suppose that $(\omega \pi M)_{\geq 0}\cong M$ and  $H_M(t)=r/(1-t)$.  Since $0\neq \pi M\in \tails _0A$, there exist $p\in E$ 
and an exact sequence 
$$0\to \pi M_p\to \pi M\to \cF\to 0$$
in $\tails A$ for some $\cF\in \tails A$.  Since $\R^1\omega \pi M_p=\H_{\fm}^2(M_p)=0$, we have an exact sequence 
$$0\to (\omega \pi M_p)_{\geq 0}\cong M_p\to (\omega \pi M)_{\geq 0}\cong M\to (\omega \cF)_{\geq 0}\to (\R^1\omega \pi M_p)_{\geq 0}=0.$$ 
If $M':=(\omega \cF)_{\geq 0}\cong M/M_p\in \grmod A$, then 
$$(\omega \pi M')_{\geq 0}=(\omega \pi (\omega \cF)_{\geq 0})_{\geq 0}\cong(\omega \pi \omega \cF)_{\geq 0}\cong (\omega \cF)_{\geq 0}=M'$$ 
and $H_{M'}(t)=H_M(t)-H_{M_p}(t)=(r-1)/(1-t)$, so $M'$ is an $(r-1)$-extension of point modules by induction, hence $M$ is an $r$-extension of point modules.
\end{proof} 

\begin{proposition} \label{prop.extp2} 
Let $A$ be a Koszul algebra such that $\cP^!(A)=(E, \t)$ and $A^!$ is an $n$-dimensional quantum polynomial algebra.  Suppose that either 
\begin{enumerate}
\item{} $E=\PP^{n-1}$, or 
\item{} $n=3$ and $||\t||=\infty$.
\end{enumerate}
If $M\in \lin A$ has no free summand and  $H_{E(M)}(t)=r/(1-t)$, then $M$ is an $r$-extension of co-point modules. 
\end{proposition}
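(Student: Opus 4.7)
The strategy is to transport the problem via Koszul duality to the quantum polynomial algebra $A^!$ and invoke \propref{prop.extp}. Set $N:=E(M)\in\lin A^!$; since $E$ is a duality, $M\cong E(N)$. Because $(A^!)^!=A$, the pair associated with $A^!$ is $\cP^!((A^!)^!)=\cP^!(A)=(E,\tau)$, so one of the hypotheses (1), (2) of \propref{prop.extp} holds for $A^!$ in place of $A$. The Hilbert-series hypothesis $H_N(t)=r/(1-t)$ is given, so everything reduces to verifying the saturation condition $(\omega\pi N)_{\geq 0}\cong N$.

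For this, the linear resolution of $N$ yields the Hilbert-series identity $\sum_{i\geq 0}(-1)^i r_i t^i = r(1-t)^{n-1}$, a polynomial of degree $n-1$. Hence $\pd_{A^!}N=n-1<n=\gldim A^!$, so by Auslander--Buchsbaum $N$ is graded Cohen--Macaulay of depth $1$; in particular $\H^0_\fm(N)=0$, so the natural map $N\hookrightarrow\omega\pi N$ is injective with cokernel $\H^1_\fm(N)$. To conclude $(\omega\pi N)_{\geq 0}\cong N$, we need $(\H^1_\fm(N))_{\geq 0}=0$. This is where the no-free-summand hypothesis on $M$ is used: via graded local duality on the AS-regular algebra $A^!$, $\H^1_\fm(N)$ is related to $\Ext^{n-1}_{A^!}(N,A^!)$, which has an explicit description in terms of the linear resolution of $N$ and (via Koszul duality) of $M$ itself; the no-free-summand hypothesis on $M$ translates into vanishing of this Ext in nonnegative degrees.

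Once $(\omega\pi N)_{\geq 0}\cong N$ is established, \propref{prop.extp} applied to $A^!$ produces a filtration $N=N_0\supset N_1\supset\cdots\supset N_r=0$ whose successive quotients are shifted point modules over $A^!$. Applying the duality $E$ and using the remark following \thmref{thm.nmfext} (that $E$ converts short exact sequences in $\lin A^!$ into reversed short exact sequences in $\lin A$), we obtain a filtration of $M\cong E(N)$ whose successive quotients are co-point modules over $A$; thus $M$ is an $r$-extension of co-point modules. The main obstacle is making the second step precise: translating the no-free-summand hypothesis on $M$ into the vanishing of $(\H^1_\fm(N))_{\geq 0}$ requires carefully combining graded local duality on $A^!$ with the Koszul-dual description of $\Ext^\bullet_{A^!}(N,A^!)$ in terms of $M$; the Hilbert-series and depth arguments are clean, but this saturation step is the technical heart of the proof.
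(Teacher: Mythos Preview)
Your overall strategy---reduce to the saturation condition $(\omega\pi N)_{\geq 0}\cong N$ for $N=E(M)$ and then invoke \propref{prop.extp}---is exactly the paper's. The difference lies in how saturation is established, and here you have swapped the roles of the two hypotheses.

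Your Hilbert-series/Auslander--Buchsbaum argument for $\H^0_\fm(N)=0$ is correct: from $H_N(t)(1-t)^n=r(1-t)^{n-1}$ and $\gldim A^!=n$ one reads off $r_n=0$, so $\pd_{A^!}N\le n-1$ and $\depth N\ge 1$. This is actually \emph{stronger} than what the paper uses at this point: the paper only gets $\H^0_\fm(N)_{\ge 1}=0$ from \cite[Corollary~4.9]{Mr} and then uses the no-free-summand hypothesis to kill $\H^0_\fm(N)_0$, via the observation that the inclusion $k^m\hookrightarrow N$ in $\lin A^!$ dualizes to a split surjection $M\twoheadrightarrow A^m$.

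Where you go astray is the $\H^1$ step. You assert that $(\H^1_\fm(N))_{\ge 0}=0$ is ``where the no-free-summand hypothesis is used,'' but it is not. By local duality on $A^!$ (AS-regular of dimension $n$ with Gorenstein parameter $n$), $(\H^1_\fm(N))_j$ is dual to $\Ext^{n-1}_{A^!}(N,A^!)_{-j-n}$; since $\Ext^{n-1}_{A^!}(N,A^!)$ is a quotient of $A^!(n-1)^{r_{n-1}}$, it lives in degrees $\ge -(n-1)$, so the relevant graded pieces vanish automatically. This is precisely the content of the result \cite[Theorem~5.4]{Mr} that the paper cites. So your ``technical heart'' dissolves once you carry out the computation, and no-free-summand plays no role there. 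In fact, in your route the no-free-summand hypothesis is never used at all: your depth argument already subsumes the one place the paper needs it. (A minor point: \propref{prop.extp} produces a filtration by \emph{unshifted} point modules, not shifted ones; this is what makes the dualization back to co-point modules clean.)
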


\begin{proof} 
It is enough to show that $(\omega \pi E(M))_{\geq 0}\cong E(M)$ by Proposition \ref{prop.extp}.   Consider the exact sequence 
$$0\to \H_{\fm}^0(E(M))\to E(M)\to \omega \pi E(M)\to \H_{\fm}^1(E(M)).$$
Since $E(M)\in \lin A^!$, it follows that
$\H_{\fm}^0(E(M))_{\geq 1}=\H_{\fm}^1(E(M))_{\geq 0}=0$ by \cite [Corollary 4.9, Theorem 5.4]{Mr}, so we have an exact sequence 
$$0\to \H_{\fm}^0(E(M))_0\cong k^m\to E(M)\to (\omega \pi E(M))_{\geq 0}\to 0$$
for some $m\in \NN$.  
Since $k^m, E(M)\in \lin A^!$, we have a surjection $M\cong E(E(M))\to E(k^m)\cong A^m$.
Since $M$ has no free summand, $m$ must be $0$, so $(\omega \pi E(M))_{\geq 0}\cong E(M)$.  
\end{proof}  

\begin{lemma} \label{lem.gs} Let $A$ be an $n$-dimensional quantum polynomial algebra such that $\cP^!(A^!)=(\PP^{n-1}, \t)$.  
For point modules $M, M'\in \grmod A$, $\Ext^1_{\GrMod A}(M, M')\neq 0$ if and only if $M\cong M'$.  
\end{lemma}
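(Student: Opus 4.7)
The plan is to reduce to the commutative polynomial ring via Lemma \ref{lem.qpa}, and then compute $\Ext^1$ directly from the Koszul resolution of a point module. By Lemma \ref{lem.qpa}, there is an equivalence $\GrMod A \to \GrMod k[x_1,\dots,x_n]$ that matches point modules bijectively; since equivalences of abelian categories preserve $\Ext^1$, I may assume $A = k[x_1,\dots,x_n]$. In this setting every point module has the form $M_p = A/I_p$, where $I_p$ is generated by the $(n-1)$-dimensional space of linear forms vanishing at $p \in \PP^{n-1}$, and the condition $M_p \cong M_q$ is equivalent to $p = q$.

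For fixed $p$, I would resolve $M_p$ by the Koszul complex on a basis $l_1,\dots,l_{n-1}$ of linear forms cutting out $p$,
\[
\cdots \to A(-2)^{\binom{n-1}{2}} \to A(-1)^{n-1} \to A \to M_p \to 0,
\]
and apply $\Hom_{\GrMod A}(-, M_q)$. Using $\Hom_{\GrMod A}(A(-j), M_q) = (M_q)_j \cong k$ for $j \geq 0$, this collapses in the relevant positions to
\[
k \xrightarrow{d^0} k^{n-1} \xrightarrow{d^1} k^{\binom{n-1}{2}},
\]
where $d^0(\lambda) = \lambda(\alpha_1,\dots,\alpha_{n-1})$ with $\alpha_i = l_i(q) \in k$ the scalar by which $l_i$ acts on the degree-$0$ generator of $M_q$, and $d^1(\beta_1,\dots,\beta_{n-1}) = (\alpha_i \beta_j - \alpha_j \beta_i)_{i<j}$. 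If $p = q$, then every $l_i$ vanishes at $q$, so all $\alpha_i = 0$, both $d^0$ and $d^1$ are zero, and $\Ext^1_{\GrMod A}(M_p, M_q) \cong k^{n-1} \neq 0$. If $p \neq q$, then $(I_p)_1 \not\subseteq (I_q)_1$, so some $\alpha_i \neq 0$; I would then identify $\ker d^1$ with the line $\{\lambda(\alpha_1,\dots,\alpha_{n-1}) : \lambda \in k\}$, which coincides with $\im d^0$, forcing $\Ext^1_{\GrMod A}(M_p, M_q) = 0$.

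The main obstacle, which is mild, is the linear-algebra identification of $\ker d^1$ when $(\alpha_i) \neq 0$: one must show that $\alpha_i \beta_j = \alpha_j \beta_i$ for every $i < j$ forces $(\beta_j)$ to be proportional to $(\alpha_j)$, which is immediate once one picks an index $k$ with $\alpha_k \neq 0$ and reads off $\beta_i = (\alpha_i/\alpha_k)\beta_k$. All other steps are routine consequences of the Koszul setup.
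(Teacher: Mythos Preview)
Your proposal is correct and follows essentially the same approach as the paper: reduce to $A=k[x_1,\dots,x_n]$ via Lemma~\ref{lem.qpa}, take the Koszul resolution of $M_p$, apply $\Hom_{\GrMod A}(-,M_q)$, and compute the degree-$0$ homology. The only difference is cosmetic: the paper fixes coordinates so that $p=(1,0,\dots,0)$ and, when $p\neq q$, $q=(0,\dots,0,1)$, and then reads off the image and kernel directly, whereas you keep general $p,q$ and phrase the computation in terms of the scalars $\alpha_i=l_i(q)$.
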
 

\begin{proof} 
By Lemma \ref{lem.qpa}, we may assume that $A=k[x_1, \dots, x_n]$.  Let $M=M_p, M'=M_{p'}$ for $p, p'\in \PP^{n-1}$.  Note that $M\cong M'$ if and only if $p=p'$.  Without loss of generality, we may assume that $p=(1, 0 \dots, 0)$ so that $M=A/(x_2, \dots, x_{n})A$. 
The minimal free resolution of $M$ starts with
$$\begin{CD} A(-2)^{(n-1)(n-2)/2} @>{\left(\begin{smallmatrix} 
x_n & 0 & 0 & \cdots \\
0 & x_n & 0 & \cdots \\
0 & 0 & x_n & \cdots \\
\vdots & \vdots & \vdots & \\
-x_2 & -x_3 & -x_4 & \cdots 
\end{smallmatrix}\right)\cdot }>>  A(-1)^{n-1} @>{\left(\begin{smallmatrix} x_2 & x_3 & \cdots & x_{n}\end{smallmatrix}\right) \cdot }>> A \to M \to 0,\end{CD}$$
so $\Ext^1_{\GrMod A}(M, M')$ is the degree 0 part of the homology of 
$$\begin{CD} M' @>{\cdot \left(\begin{smallmatrix} x_2 & x_3 & \cdots & x_n\end{smallmatrix}\right)}>\phi> M'(1)^{n-1} @>{\cdot \left(\begin{smallmatrix}
x_n & 0 & 0 & \cdots \\
0 & x_n & 0 & \cdots \\
0 & 0 & x_n & \cdots \\
\vdots & \vdots & \vdots & \\
-x_2 & -x_3 & -x_4 & \cdots 
\end{smallmatrix}\right)}>\psi> M'(2)^{(n-1)(n-2)/2}.\end{CD}$$

If $p=p'$, then $\phi=\psi=0$, so $\Ext^1_{\GrMod A}(M, M')=M'(1)^{n-1}_0=(kx_1)^{n-1}\neq 0$.  
If $p\neq p'$, then, without loss of generality, we may assume that $p'=(0, \dots, 0, 1)$ so that $M'=A/(x_1, \dots, x_{n-1})A$.  
In this case, 
$$(\Im \phi)_0=\{(0, \dots, 0, ax_n)\in (kx_n)^{n-1}
=M'(1)_0^{n-1}\mid a\in k\}=(\Ker \psi)_0,$$ 
so $\Ext^1_{\GrMod A}(M, M')=0$.  
\end{proof} 

\begin{theorem} \label{thm.last} 
Let $T=k\<x_1, \dots, x_n\>/(\a_{ij}x_ix_j+x_jx_i)$ be a skew polynomial algebra, and put $S=T/(x_1^2, \dots, x_{n-1}^2)$.
Let $f=x_n^2\in S_2$ so that $A=S/(f)$ is a skew exterior algebra, and let $\cP^!(A)=(E, \t)$.
Suppose that either 
\begin{enumerate}
\item{} $\a_{ij}\a_{jk}\a_{ki}=1$ for all $1\leq i, j ,k\leq n$, or 
\item{} $n=3$ and $\a_{12}\a_{23}\a_{31}$ is not a root of unity.
\end{enumerate}
Then $M\in \TR^0_S(A)$ has no free summand if and only if $M$ is a finite extension of co-point modules $N_{p_i}$ over $A$ where $p_i\in E\setminus \cV(x_n)$. 
\end{theorem}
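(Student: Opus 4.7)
For the backward direction $(\Leftarrow)$, assume $M$ is a finite extension of co-point modules $N_{p_i}$ with $p_i \in E \setminus \cV(x_n)$. Each $N_{p_i} \in \TR^0_S(A)$ by Theorem \ref{thm.copo}(1), and $\TR^0_S(A)$ is closed under extensions by Proposition \ref{prop.lin}(1), so $M \in \TR^0_S(A)$. To rule out free summands, I would compare Hilbert series: additivity along the rank-one linear resolution of $N_p$ gives $H_{N_p}(t) = H_A(t)/(1+t) = (1+t)^{n-1}$, so $H_M(t) = r(1+t)^{n-1}$. For any $m \geq 0$, the coefficient of $t^{m+n}$ in $H_{A(-m)}(t) = t^m(1+t)^n$ is $1$ but vanishes in $H_M(t)$, while $A(m)$ for $m > 0$ has nonzero parts in negative degrees where $M$ does not; so no shift of $A$ can be a direct summand of $M$.

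For the forward direction $(\Rightarrow)$, assume $M \in \TR^0_S(A)$ has no free summand. By Proposition \ref{prop.lin}(1), $M \in \lin A$, and by Proposition \ref{prop.mfr}, $M \cong \Coker \phi$ for some $\phi \in \NMF^0_S(f)$ of rank $r$. Consequently $\overline{F^i} \cong A(-i)^r$, so the minimal linear resolution of $M$ has constant rank $r$, whence $H_{E(M)}(t) = r/(1-t)$. The Koszul dual $A^! \cong k\langle x_1, \dots, x_n\rangle/(x_ix_j - \alpha_{ij}x_jx_i)$ is a skew polynomial algebra, hence an $n$-dimensional quantum polynomial algebra by Example \ref{ex.qpa}(3). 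Under hypothesis (1), \cite[Theorem 4.1]{Ue} yields $E = \PP^{n-1}$; under hypothesis (2), the explicit form of $\tau$ on $\PP^2$ combined with the hypothesis on $\alpha_{12}\alpha_{23}\alpha_{31}$ forces $||\tau|| = \infty$. Applying Proposition \ref{prop.extp2}, $M$ is an $r$-extension of co-point modules $N_{p_i}$ for some $p_i \in E$.

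It remains to verify $p_i \notin \cV(x_n)$. Using the horseshoe lemma applied inductively to the filtration $M = M_0 \supset \cdots \supset M_r = 0$ (with $M_i/M_{i+1} \cong N_{p_i}$) together with the uniqueness of lifts from $A_1 = S_1$, I can choose a graded basis of $F^0 \cong S^r$ in which $\Phi^0$ is an upper triangular $r \times r$ matrix with entries in $S_1$ and diagonal entries $\Phi^0_{ii} = \sum_j a_{ij} x_j$, where $p_i = (a_{i1}, \dots, a_{in})$. Now reduce the equation $\Phi^0\Phi^1 = x_n^2 E_r$ modulo the two-sided ideal $(x_1, \dots, x_{n-1})$ of $S$. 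Since $S/(x_1, \dots, x_{n-1}) \cong k[x_n]$ and entries of $\Phi^0, \Phi^1$ lie in $S_1$, their images are of the form $x_n A^0, x_n A^1$ for $r \times r$ matrices $A^0, A^1$ over $k$. The equation becomes $x_n^2 A^0 A^1 = x_n^2 E_r$ in matrices over $k[x_n]$, so $A^0 A^1 = E_r$ and $A^0$ is invertible. But $A^0$, being the matrix of $x_n$-coefficients of $\Phi^0$, is upper triangular with diagonal $(a_{1n}, \dots, a_{rn})$; invertibility forces each $a_{in} \neq 0$, i.e., $p_i \notin \cV(x_n)$.

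The main obstacle is lifting the upper triangular form of $\overline{\Phi^0}$ (supplied by the horseshoe construction over $A$) to an upper triangular $\Phi^0$ with entries in $S_1$, and pairing it with a suitable $\Phi^1$. This relies on the observation that graded changes of basis for a graded matrix factorization of $f \in S_2$ are degree-zero, hence elements of $\GL_r(S_0) = \GL_r(k)$, together with $A_1 = S_1$, which forces lifts of linear entries from $A$ to $S$ to be unique; thus the upper triangular form over $A$ lifts canonically to an upper triangular $\Phi^0$ over $S$, and the reduction mod $(x_1, \dots, x_{n-1})$ then carries the matrix factorization equation into the polynomial ring $k[x_n]$, where invertibility is transparent.
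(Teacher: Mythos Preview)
Your argument is correct, and the key step---showing $p_i\notin\cV(x_n)$---is handled by a genuinely different method than the paper's.

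The paper proves $p_i\notin\cV(x_n)$ by induction on $r$: given $0\to M'\to M\to N_{p_r}\to 0$ with $M'$ an $(r-1)$-extension, either the sequence splits (so $N_{p_r}\in\TR^0_S(A)$ and Theorem~\ref{thm.copo}(1) applies) or it does not, in which case the dual sequence over $A^!$ forces $\Ext^1_{\GrMod A^!}(M_{p_i},M_{p_r})\neq 0$ for some $i<r$, and then a case analysis (Lemma~\ref{lem.gs} under hypothesis~(1), a result of Ajitabh under hypothesis~(2)) pins down $p_r$ in terms of $p_i$, $\t$, and the Nakayama automorphism, all of which preserve $E\setminus\cV(x_n)$. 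Your approach instead upper-triangularizes $\Phi^0$ via the horseshoe construction (legitimate here because the horseshoe resolution built from rank-one linear resolutions of the $N_{p_i}$ has all entries in degree~$1$, hence is already minimal and agrees with the one from Proposition~\ref{prop.mfr} up to $\GL_r(k)$-change of basis), then reduces the equation $\Phi^0\Phi^1=x_n^2E_r$ modulo the two-sided ideal $(x_1,\dots,x_{n-1})$ of $S$ to read off invertibility of the diagonal of $x_n$-coefficients. This is more elementary: it avoids the Ext computation and the separate treatment of hypotheses~(1) and~(2). What the paper's route buys is extra structural information about which co-point modules can extend which (via the explicit description of nonvanishing $\Ext^1$), whereas your argument extracts only what is needed.

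For the backward direction your Hilbert-series argument for ``no free summand'' is also fine; the paper instead notes that a free summand of $M$ would make $E(M)$ non--torsion-free over the domain $A^!$, contradicting that $E(M)$ is an extension of point modules.
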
 

\begin{proof} Note that if (1) $\a_{ij}\a_{jk}\a_{ki}=1$ for all $1\leq i, j ,k\leq n$, then $E=\PP^{n-1}$ by \cite[Theorem 4.1]{Ue}, and if (2) $n=3$ and $\a_{12}\a_{23}\a_{31}$ is not a root of unity, then $||\t||=\infty$ by \cite [Lemma 4.13]{Mrb}.  In either case, $A^!$ is a quantum polynomial algebra by Example \ref{ex.qpa}.

If $M\in \TR^0_S(A)$ has no free summand, then there exists $\{\phi^i:F^{i+1}\to F^i\}_{i \in \ZZ}\in \NMF^0_S(f)$ of some rank $r$ such that $\Coker \phi\cong M$ and 
$\overline {F^i}\cong A^r(-i)$ for every $i\in \ZZ$ by Proposition \ref{prop.mfr}. Since $M\in \lin A$ by Proposition \ref{prop.lin},  
$H_{E(M)}=r/(1-t)$, 
so $E(M)$ is an $r$-extension of point modules $M_{p_i}$ for some $p_i\in E$ over $A^!$ by Proposition \ref{prop.extp2}, hence $M$ is an $r$-extension of co-point modules $N_{p_i}$ over $A$.  
We will show that $p_i\not \in \cV(x_n)$ by induction on $r$.  The case $r=1$ follows from Theorem \ref{thm.copo} (1).  Suppose that $M'$ is an extension of co-point modules $N_{p_1}, \dots, N_{p_{r-1}}$ such that there exists an exact sequence 
$$0\to M'\to M\to N_{p_r}\to 0.$$  
By induction, $p_1, \dots, p_{r-1}\in E\setminus \cV(x_n)$.  Since $\TR^0_S(A)$ is closed under direct summand, if the above exact sequence splits, then $N_{p_r}\in \TR^0_S(A)$, so $p_r\in E\setminus \cV(x_n)$.  On the other hand, if the above exact sequence does not split, then the exact sequence $0\to E(N_{p_r})=M_{p_r}\to E(M)\to E(M')\to 0$ does not split.
Since $E(M')$ is an extension of point modules $M_{p_1}, \dots, M_{p_{r-1}}$, we have $\Ext^1_{\GrMod A}(M_{p_i}, M_{p_r})\neq 0$ for some $i=1, \dots, r-1$.  

If (1) $E=\PP^{n-1}$, then 
$\Ext^1_{\GrMod A}(M_{p_i}, M_{p_r})\neq 0$ implies $p_r=p_i\in E\setminus \cV(x_n)$ by Lemma \ref{lem.gs}.
If (2) $n=3$ and $||\t||=\infty$, then either $p_r=p_i\in E\setminus \cV(x_n)$ or $p_r=\varphi ^{-1}\t^{-3}(p_i)$ where $\varphi$ is the Nakayama automorphism of $A^!$ by \cite [Lemma 2.16]{Aj} (cf. \cite [Proposition 4.22]{Mrb}).  Since $\t(\cV(x_n))=\cV(x_n)$ and $\varphi (\cV(x_n))=\cV(x_n)$, we have $\varphi ^{-1}\t^{-3}(p_i)\in E\setminus \cV(x_n)$ by \cite [Theorem 4.1]{Ue}, hence the result. 

Conversely, if $M$ is a finite extension of co-point modules $M_{p_i}$ over $A$ where $p_i\in E\setminus \cV(x_n)$, then $M\in \TR^0_S(A)$ by Theorem \ref{thm.copo} (2).  If $M$ has a free  summand, then $E(M)$ is not torsion-free.   However, since $E(M)$ is a finite extension of point modules, $E(M)$ is torsion-free, so $M$ has no free summand. 
\end{proof}

\end{document}